\newcommand{\fig}[1]{\mbox{Figure{~#1}}}
\newcommand{\bx}{{\mathbf{x}}}
\newcommand{\bmu}{{\boldsymbol{\mu}}}
\newcommand{\blambda}{{\boldsymbol{\lambda}}}
\newcommand{\bu}{\mathbf{u}}
\newcommand{\bphi}{\boldsymbol{\phi}}
\newcommand{\bpoly}{\mathbf{p}}
\newcommand{\half}{\frac{1}{2}}
\newtheorem{thm}{Theorem}
\newtheorem{rem}{Remark}
\newtheorem{cor}{Corollary}
\newtheorem{lem}{Lemma}[section]
\newcommand{\verteq}{\rotatebox{90}{$\,=$}}
\newcommand{\ba}{\begin{array}}
\newcommand{\ea}{\end{array}}
\newcommand{\be}{\begin{equation}}
\newcommand{\ee}{\end{equation}}
\newcommand{\bd}{\begin{displaymath}}
\newcommand{\ed}{\end{displaymath}}
\newcommand{\pa}{\partial}
\newcommand{\f}{\frac}
\begin{document}

\title{Universal AMG Accelerated Embedded Boundary Method Without Small Cell Stiffness}

\author{
Zhichao Peng 
\thanks{Department of Mathematics, Michigan State University, East Lansing, MI 48824 U.S.A. Email: {\tt pengzhic@msu.edu}.}
\and
Daniel Appel\"{o}
\thanks{Department of Computational Mathematics, Science and Engineering and Department of Mathematics, Michigan State University, East Lansing, MI 48824 U.S.A. Email: {\tt appeloda@msu.edu}.} 
\and
Shuang Liu 
\thanks{Department of Mathematics, University of California, San Diego, CA 92093, U.S.A.  Email: {\tt shl083@ucsd.edu}.}
}

\date{\today}
\maketitle

\begin{abstract}
	
	\noindent
	We  develop a universally applicable  embedded boundary finite difference method, which results in a symmetric positive definite linear system and does not suffer from small cell stiffness. Our discretization is efficient for the wave, heat and Poisson's equation with Dirichlet boundary conditions. When the system needs to be inverted we can use the conjugate gradient method, accelerated by algebraic multigrid techniques. A series of numerical tests for the wave, heat and Poisson's equation and applications to shape optimization problems verify the accuracy, stability, and efficiency of our method. Our fast computational techniques can be extended to moving boundary problems (e.g. Stefan problem), to the Navier-Stokes equations, and to the Grad-Shafranov equations for which problems are posed on domains with complex geometry and fast simulations are very important. 
	
	\bigskip

	\noindent
	{\bf Keywords:}

	algebraic multigrid,  embedded boundary method, line-by-line interpolation, radial basis function interpolation.

\end{abstract}


\section{Introduction}
Fast and accurate simulation of problems arising in engineering and the sciences is of great importance.   Often such problems are posed on domains with complex geometry and the numerical methods used in the simulations must account for this. There are numerous methods that are capable of handling geometry, among them are the finite element method \cite{CourantFEM} and other methods using unstructured grids, overset grid methods  \cite{volkov1968method,starius1977composite,CHESSHIRE:1990ca} and embedded boundary methods. 

In element based methods a volumetric grid must be generated and in both two and three dimensions this can be a time consuming task, especially if the grids are to be of high quality. In methods that use the overset grid (also known as composite grid, or overlapping grid) framework the grids are overset and a Cartesian background grid is coupled through interpolation to local boundary fitted narrow grids near the geometry. The locality of the boundary fitted grids makes the grid generation easier and the quality of the grids are is typically very high. The hole-cutting, the process where the interpolation operators between grids are constructed, can be done efficiently \cite{CHESSHIRE:1990ca} but there are relatively few software packages available. 

In embedded boundary (EB) methods, which is the topic of this paper,  the geometry is represented by curves in two dimensions and surfaces in three dimensions. These curves or surfaces are, as the name suggests, embedded in a uniform Cartesian grid that covers the computational domain. In an embedded boundary method there is no need to generate a grid and the geometry is instead incorporated through modified stencils near the boundary that explicitly incorporate boundary conditions. 

The purpose of this work is to introduce an universal embedded boundary method that can be used to efficiently solve the wave, heat or Poisson's equation with Dirichlet boundary conditions. This is achieved by designing a method that:   
\setlist{nolistsep}
\begin{itemize}[noitemsep]
\item is symmetric and positive definite, so that the conjugate gradient (CG) method can be used,
\item is diagonally dominant and with eigenvalues and eigenvectors that closely resemble those of the periodic problem, so that the conjugate gradient method can be accelerated by algebraic multigrid (AMG) techniques,
\item and, does not suffer from small cell stiffness so that the wave equation can be marched in time by an explicit method.     
\end{itemize}

The basic ingredient to obtaining a symmetric embedded boundary discretization is to use an approximation for the boundary condition that only modifies the diagonal element in the matrix approximating the second derivative. The most straightforward approach is to approximate the second derivative dimension-by-dimension and use linear extrapolation based on the boundary condition and the numerical solution at the interior point to assign the value of the numerical solution at an {\bf outside ghost-point}. In one dimension this modifies the diagonal element from $-2$ to $-(1/\theta+1)$ where $\theta \in (0,1]$ depends on how the boundary cuts the grid. This does not change the symmetry of the matrix and it also does not change its definiteness. It does however change the spectrum of the matrix, introducing an eigenvalue that scale as $1/\theta$. If used directly for the wave equation this approach will thus suffer from small cell stiffness, forcing the stable time-step to be excessively small. This can be mitigated by the local time-stepping proposed by Kreiss, Petersson and Ystr\"{o}m in \cite{kreiss:1940} resulting in a provably stable and efficient embedded boundary method. The same approach was also introduced by Gibou et al. \cite{gibou2002second} for Poisson's equation and the heat equation with implicit time-stepping. Then, as the time-stepping is already implicit the small cell does not reduce the time-step, however the very large eigenvalues that result from the small cells can result in matrices with large condition numbers and care has to be taken when choosing an iterative solver. 

Here we expand on the ideas in \cite{gibou2002second} and  \cite{kreiss:1940} but just as in our earlier work \cite{AppPetEB09} we enforce the Dirichlet boundary conditions by {\bf interpolating to interior boundary points} rather than extrapolate to exterior ghost-points. This subtle yet crucial difference improves
previous second-order accurate approaches by removing the small-cell stiffness problem. Moreover, placing boundary points inside the computational domain allows the solution to be ``single-valued'' for slender geometries, leading to
significant algorithmic simplifications. For geometry that is convex it is still possible to use a line-by-line linear extrapolation to enforce boundary conditions but when the geometry is concave this procedure can break down. For such cases we introduce a combined polynomial and radial basis function interpolation that prevents breakdown. We also propose a simple criterion that can be used to test if the system matrix is SPD.    

Embedded boundary methods have been used to successfully solve a variety of problems from elasticity \cite{WellerShortly1939} to incompressible \cite{Peskin:1972qz} and compressible flows \cite{Pember:1995bk}. Here we do not aim to provide a complete literature survey but rather to mention contributions relevant to the method we introduce. As mentioned above \cite{gibou2002second} develops a symmetric second order method by imposing the boundary condition through linear extrapolation. This method is analyzed and expanded to quadratic boundary treatment in  \cite{jomaa2005embedded}. To obtain second order accuracy in both the solution and its gradient, \cite{ng2009guidelines} proposes a non-symmetric discretization based on a quadratic extrapolation. Adaptive mesh refinement (AMR) techniques for the method in \cite{ng2009guidelines} were considered in \cite{chen2007supra,chen2009numerical}. Finite volume solvers with embedded boundaries using bilinear interpolation were considered in \cite{johansen1998cartesian,schwartz2006cartesian}. Higher order accurate methods for Poisson's and the heat equation can be found in \cite{coco2013finite, gibou2005fourth}. For wave equations the early works by Kreiss, Petersson and Ystr\"{o}m \cite{Kreiss_Petersson_2006,kreiss:1940,kreiss:1292} provided analysis of second order accurate methods with external ghost-points and Dirichlet, Neumann and interface conditions. Higher order accurate methods for the wave equation include \cite{AppPetEB09,FCAD1,Li2004295,Lombard:2004tx,LomPirGelVir07,LyonThesis,FCAD2,Visher:2004iz,Wandzura2004763}.

The rest of the paper is organized as follows. In Section \ref{sec:description}, we first present the overall algorithm. Then, we show the details of how interior boundary points are located, the formulation of the line-by-line and RBF-based interpolation as well as the SPD checking criteria. In Section \ref{sec:num}, the performance of the proposed method is demonstrated through a series of numerical experiments. In Section \ref{sec:conclusion}, we summarize and  conclude.

\section{Universal embedded boundary discretization of the Laplacian}\label{sec:description}
Our goal is to design an embedded boundary finite difference method, which results in a symmetric positive definite (SPD) linear system and does not suffer from small cell stiffness. Our discretization can thus be efficient for the wave, heat and Poisson's equation. We now describe the different components of our method one at a time but note that the entire method is summarized in Algorithm \ref{alg:cut_cell}.

To demonstrate the discretization of the Laplacian operator, consider Poisson's equation in an irregular two-dimensional domain $(x,y)\in\Omega$:
\be \label{eq:peq} 
\nabla\cdot(\beta(x,y)\nabla u) = f(x,y), \quad (x,y)\in \Omega,
\ee 
closed by Dirichlet boundary conditions
\be
\label{eq:peqbcD} u(x,y) = u^{(l)}_{\mathcal{D}}(x,y),\quad (x,y)\in \Gamma_l, \ \  l=1,\ldots, n_{\rm tot}. 
\ee 
The boundary of the domain $\Omega$ is a collection of $n_{\text{tot}}$ smooth
curves $\Gamma_l$. A possible set-up for an interior problem is shown in \fig{\ref{fig:probsetup}}, where one curve encloses the other $n_{\rm tot}-1$  curves.    
\begin{figure}[]
	\begin{center}
	\setlength{\unitlength}{0.8cm} 
	\begin{picture}(7,7) 
		\qbezier(1,1)(3.5,0.5)(2,2)
		\qbezier(2,2)(1.5,2.5)(1,2)
		\qbezier(1,2)(0.2,1.2)(1,1)
		\put(0.6,2.3){$\Gamma_1$} 
		
		\qbezier(5,1)(5.9,1.3)(5,3)
		\qbezier(5,3)(4,5)(4,2.2)
		\qbezier(4,2.2)(4,1)(5,1)
		\put(5.2,3.1){$\Gamma_2$}

		\qbezier(2,4)(1.3,5)(2,6)
		\qbezier(2,6)(3.0,7.0)(3,5.5)
		\qbezier(3,5.5)(3.1,1.8)(2,4)
		\put(3.1,5.3){$\Gamma_3$} 
		
		\qbezier(0.9,0.4)(4.5,0)(6.3,0.4)
		\qbezier(6.3,0.4)(6.6,0.5)(6.6,0.7)
		\qbezier(6.6,0.7)(7,4.5)(6.6,6.3)
		\qbezier(6.6,6.3)(6.5,6.45)(6.3,6.5)
		\qbezier(6.3,6.5)(4.5,7)(0.8,6.5)
		\qbezier(0.8,6.5)(0.63,6.48)(0.5,6.2)
		\qbezier(0.5,6.2)(0,4.5)(0.5,0.9)
		\qbezier(0.5,0.9)(0.55,0.45)(0.9,0.4)
		
		\put(6,6){$\Gamma_4$} 
	\end{picture}	
	\end{center}
	\caption{An illustration of an interior problem with the boundary consisting of four disconnected curves. \label{fig:probsetup}}
\end{figure}
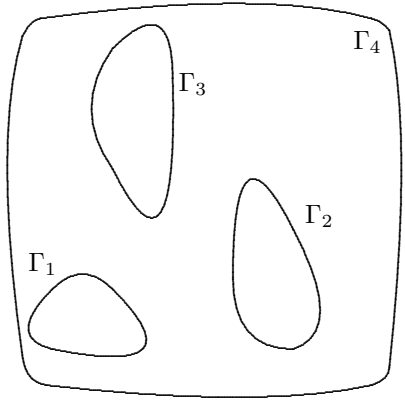

Without loss of generality, we assume that all the boundary interfaces describing the geometry $\Omega$ are contained inside the uniform Cartesian grid (see
\fig{\ref{fig:probsetupdiscrete}})
\bd
(x_i,y_j) = (x_L+(i-1)h,y_L+(j-1)h),\ \ i=1,\ldots,N_x,\, j=1,\ldots,N_y,
\ed
discretizing the rectangular domain $[x_L,x_R] \times [y_L,y_R]$ where $x_L$ and $y_L$ are given and $x_{N_x}=x_R$ and $y_{N_y} = y_R$ are determined so that they align with the grid. 

To this end we will approximate the Laplacian operator by the central difference scheme:
\begin{align}
\nabla\cdot(\beta(x_i,y_j)\nabla u(x_i,y_j))&\approx 
\frac{\beta_{i+\half,j}u_{i+1,j}-(\beta_{i+\half,j}+\beta_{i-\half,j})u_{i,j}+\beta_{i-\half,j}u_{i-1,j}}{h^2}\notag\\
&+\frac{\beta_{i,j+\half}u_{i,j+1}-(\beta_{i,j+\half}+\beta_{i,j-\half})u_{i,j}+\beta_{i,j-\half}u_{i,j-1}}{h^2}.
\label{eq:central_difference}
\end{align}

As mentioned in the introduction, a novelty of our method is to enforce boundary conditions through interpolation to interior boundary points. This avoids small cell stiffness. To identify interior boundary points, we first set up a mask grid function  $m_{i,j}$  defined to be one inside the geometry and zero outside. That is:
\[
m_{i,j} = \begin{cases}
1,&(x_i,y_j)\in\Omega,\\
0,&\mbox{otherwise}.
\end{cases}
\]
For example, if the boundary of $\Omega$ is determined by the signed level-set function $\psi(x,y)$=0, then the value of the mask $m_{i,j}$ follows by the sign of $\psi(x_i,y_j)$. An example of a mask grid function
is shown in \fig{\ref{fig:probsetupdiscrete}}.

We denote grid points inside $\Omega$ but adjacent to the boundary as \emph{boundary  points} and we denote the remaining interior grid points as \emph{computational points}. Precisely we define: 
\begin{itemize}
\item A {\bf boundary 
point} $(x_i,y_j)$ satisfies:
\begin{itemize} 
\item[(1)] $m_{i,j} = 1$,
\item[(2)] $m_{i+1,j} + m_{i-1,j} +m_{i,j+1} +m_{i,j-1} < 4$.
\end{itemize} In other words, $(x_i,y_j)$ is inside $\Omega$, but at least one
of its nearest neighbors is outside. 
\item A {\bf computational point} $(x_i,y_j)$ satisfies: 
\begin{itemize}
\item[(1)] $m_{i,j} = 1$, 
\item[(2)] $m_{i+1,j} + m_{i-1,j} +m_{i,j+1} +m_{i,j-1} = 4$. 
\end{itemize}
In other words, $(x_i,y_j)$ and all its nearest neighbors are all inside $\Omega$.
\end{itemize}

\begin{figure}[htb]
\begin{center}
\subfigure[]{
\setlength{\unitlength}{0.7cm} 
\begin{picture}(7,7) 
\multiput(0,0)(0,0.7){11}{\line(1,0){7}}
\multiput(0,0)(0.7,0){11}{\line(0,1){7}}

\qbezier(1,1)(3.5,0.5)(2,2)
\qbezier(2,2)(1.5,2.5)(1,2)
\qbezier(1,2)(0.2,1.2)(1,1)
\put(0.6,2.3){$\Gamma_1$} 

\qbezier(5,1)(5.9,1.3)(5,3)
\qbezier(5,3)(4,5)(4,2.2)
\qbezier(4,2.2)(4,1)(5,1)
\put(5.2,3.1){$\Gamma_2$}

\qbezier(2,4)(1.3,5)(2,6)
\qbezier(2,6)(3.0,7.0)(3,5.5)
\qbezier(3,5.5)(3.1,1.8)(2,4)
\put(3.1,5.3){$\Gamma_3$} 

\qbezier(0.9,0.4)(4.5,0)(6.3,0.4)
\qbezier(6.3,0.4)(6.6,0.5)(6.6,0.7)
\qbezier(6.6,0.7)(7,4.5)(6.6,6.3)
\qbezier(6.6,6.3)(6.5,6.45)(6.3,6.5)
\qbezier(6.3,6.5)(4.5,7)(0.8,6.5)
\qbezier(0.8,6.5)(0.63,6.48)(0.5,6.2)
\qbezier(0.5,6.2)(0,4.5)(0.5,0.9)
\qbezier(0.5,0.9)(0.55,0.45)(0.9,0.4)

\put(6,6){$\Gamma_4$} 
\end{picture}
}
\subfigure[]{
\setlength{\unitlength}{0.7cm} 
\begin{picture}(7,7) 
\multiput(0,0)(0,0.7){11}{\line(1,0){7}}
\multiput(0,0)(0.7,0){11}{\line(0,1){7}}

\multiput(0.7,0.7)(0.7,0){9}{\circle*{0.15}}
\multiput(2.8,1.4)(0.7,0){2}{\circle*{0.15}}
\multiput(5.6,1.4)(0.7,0){2}{\circle*{0.15}}
\multiput(0.7,2.1)(0.7,0){1}{\circle*{0.15}}
\multiput(2.1,2.1)(0.7,0){3}{\circle*{0.15}}
\multiput(5.6,2.1)(0.7,0){2}{\circle*{0.15}}
\multiput(0.7,2.8)(0.7,0){5}{\circle*{0.15}}
\multiput(5.6,2.8)(0.7,0){2}{\circle*{0.15}}
\multiput(0.7,3.5)(0.7,0){3}{\circle*{0.15}}
\multiput(3.5,3.5)(0.7,0){1}{\circle*{0.15}}
\multiput(4.9,3.5)(0.7,0){3}{\circle*{0.15}}
\multiput(0.7,4.2)(0.7,0){2}{\circle*{0.15}}
\multiput(3.5,4.2)(0.7,0){5}{\circle*{0.15}}
\multiput(0.7,4.9)(0.7,0){2}{\circle*{0.15}}
\multiput(3.5,4.9)(0.7,0){5}{\circle*{0.15}}
\multiput(0.7,5.6)(0.7,0){2}{\circle*{0.15}}
\multiput(3.5,5.6)(0.7,0){5}{\circle*{0.15}}
\multiput(0.7,6.3)(0.7,0){3}{\circle*{0.15}}
\multiput(3.5,6.3)(0.7,0){5}{\circle*{0.15}}

\qbezier(1,1)(3.5,0.5)(2,2)
\qbezier(2,2)(1.5,2.5)(1,2)
\qbezier(1,2)(0.2,1.2)(1,1)
\put(0.6,2.3){$\Gamma_1$} 

\qbezier(5,1)(5.9,1.3)(5,3)
\qbezier(5,3)(4,5)(4,2.2)
\qbezier(4,2.2)(4,1)(5,1)
\put(5.2,3.1){$\Gamma_2$} 

\qbezier(2,4)(1.3,5)(2,6)
\qbezier(2,6)(3.0,7.0)(3,5.5)
\qbezier(3,5.5)(3.1,1.8)(2,4)
\put(3.1,5.3){$\Gamma_3$} 

\qbezier(0.9,0.4)(4.5,0)(6.3,0.4)
\qbezier(6.3,0.4)(6.6,0.5)(6.6,0.7)
\qbezier(6.6,0.7)(7,4.5)(6.6,6.3)
\qbezier(6.6,6.3)(6.5,6.45)(6.3,6.5)
\qbezier(6.3,6.5)(4.5,7)(0.8,6.5)
\qbezier(0.8,6.5)(0.63,6.48)(0.5,6.2)
\qbezier(0.5,6.2)(0,4.5)(0.5,0.9)
\qbezier(0.5,0.9)(0.55,0.45)(0.9,0.4)

\put(6,6){$\Gamma_4$} 
\end{picture}
}
\caption{Embedded boundaries of an interior problem in a rectangular mesh.  (a) Discretization of the geometry without the mask shown. (b) Discretization of the geometry with the mask shown, grid points with a filled circle have $m_{ij}=1$, grid points without a filled circle have $m_{ij}=0$. \label{fig:probsetupdiscrete}}
\end{center}
\end{figure}
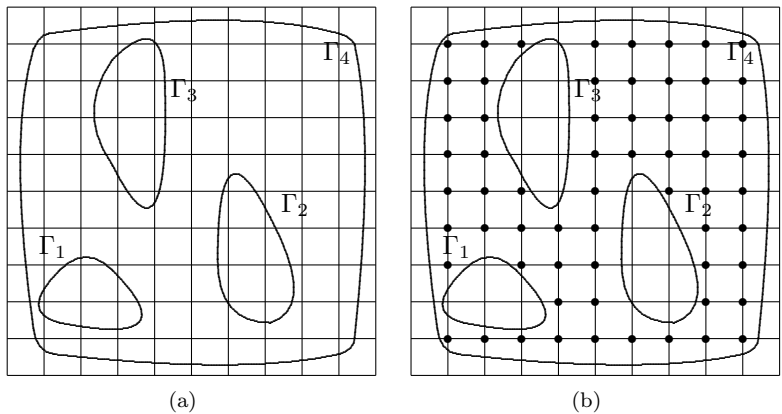

\subsection{Imposing the boundary condition at boundary points}\label{sec:bc_treatment}
We now describe how we use interpolation together with the boundary conditions to assign values to the solution at interior boundary points. We use two strategies, line-by-line interpolation and radial basis interpolation. We first describe the line-by-line approach. 

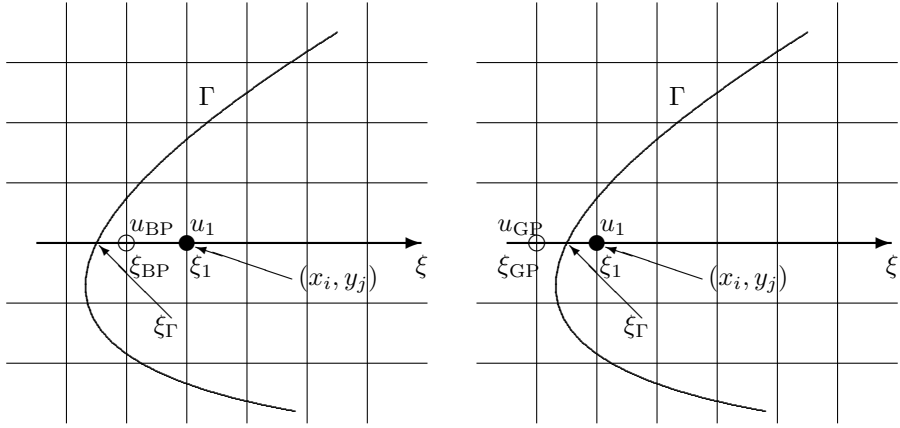
\begin{figure}[htb]
\begin{center}
\setlength{\unitlength}{0.8cm} 
\begin{picture}(7,7) 
\multiput(0,1)(0,1){2}{\line(1,0){7}}
\multiput(0,4)(0,1){3}{\line(1,0){7}}
\multiput(1,0)(1,0){6}{\line(0,1){7}}
\qbezier(5.5,6.5)(-2.5,1.5)(4.8,0.2)
\put(2,3){\circle{0.25}}
\put(3,3){\circle*{0.25}}

\put(2.05,3.2){$u_{\textrm{BP}}$} 
\put(3.05,3.2){$u_{1}$} 

\put(3.2,5.3){$\Gamma$}
\put(2.05,2.5){$\xi_{\textrm{BP}}$} 
\put(3.05,2.5){$\xi_{1}$} 

\put(2.45,1.5){$\xi_{\Gamma}$} 
\put(2.75,1.75){\vector(-1,1){1.2}}

\put(4.75,2.4){\vector(-3,1){1.6}}
\put(4.85,2.3){$(x_i,y_j)$} 

\thicklines
\put(0.5,3){\vector(1,0){6.4}}
\put(6.8,2.5){$\xi$}
\end{picture}
\hspace{0.3cm}
\setlength{\unitlength}{0.8cm} 
\begin{picture}(7,7) 
\multiput(0,1)(0,1){2}{\line(1,0){7}}
\multiput(0,4)(0,1){3}{\line(1,0){7}}
\multiput(1,0)(1,0){6}{\line(0,1){7}}
\qbezier(5.5,6.5)(-2.5,1.5)(4.8,0.2)
\put(1,3){\circle{0.25}}
\put(2,3){\circle*{0.25}}

\put(0.35,3.2){$u_{\textrm{GP}}$} 
\put(2.05,3.2){$u_{1}$} 

\put(0.35,2.5){$\xi_{\textrm{GP}}$} 
\put(2.05,2.5){$\xi_{1}$} 

\put(2.45,1.5){$\xi_{\Gamma}$} 
\put(2.75,1.75){\vector(-1,1){1.2}}

\put(3.2,5.3){$\Gamma$}

\put(3.75,2.4){\vector(-3,1){1.6}}
\put(3.85,2.3){$(x_i,y_j)$}

\thicklines
\put(0.5,3){\vector(1,0){6.4}}
\put(6.8,2.5){$\xi$}
\end{picture}
\caption{Enforcing Dirichlet boundary conditions by a line by line approach using interior boundary points (left) or exterior  ghost points (right).\label{fig:LinebylineD}}
\end{center}
\end{figure}

\subsubsection{Line-by-line interpolation}\label{sec:linebyline}
We describe the line-by-line approach for a case such as the one depicted in the left image of \fig{\ref{fig:LinebylineD}}.

Let $(x_i,y_j)$ be a computational point and $(x_{\textrm{BP}}, y_{\textrm{BP}})=(x_{i-1},y_j)$ be a boundary point associated with the boundary $\Gamma$.  If the point
$(x_{i-2},y_j)$ is outside $\Omega$, 
we introduce a local one-dimensional coordinate system
$\xi$ along the grid line in $x$ passing through $(x_{\textrm{BP}},y_{\textrm{BP}})$. We denote the intersection of the horizontal line $ y = y_j$  and  the boundary $\Gamma$ by $\xi_\Gamma$, and the boundary value at $\xi_\Gamma$ by $u_{\Gamma}$.  The $\xi_\Gamma$ satisfies the scalar equation  $\psi(\xi_\Gamma,y_j) = 0$ and can be found by a root-finding algorithm such as the secant method.

Let $u_{\Gamma}$ be the value of the boundary condition at $(\xi_\Gamma,y_j)$. We introduce an interpolating polynomial
\be \label{eq:Interpolant}
\mathcal{I}_{P}u(\xi) = u_{\Gamma}g_{\Gamma}(\xi) + u_{1} g_{1}(\xi),  
\ee 
where $g_\Gamma$ and $g_1$ are the Lagrange polynomials
\be
g_\Gamma(\xi) = \frac{\xi-\xi_1}{\xi_\Gamma-\xi_1},\; g_1(\xi)=\frac{\xi-\xi_\Gamma}{\xi_1-\xi_\Gamma}.
\ee
Then, the value of the solution at the boundary point $u_{\textrm{BP}}$ can be approximated to second order accuracy by evaluating the interpolant 
\be
\label{eq:DirichletLineByLine} u_{\textrm{BP}} = \mathcal{I}_{P}u(\xi_{\textrm{BP}}) = u_\Gamma g_\Gamma(\xi_{\textrm{BP}})+u_1 g_1(\xi_{\textrm{BP}})= u_\Gamma \frac{\xi_{\textrm{BP}}-\xi_1}{\xi_\Gamma-\xi_1}-u_1\frac{\xi_{\textrm{BP}}-\xi_\Gamma}{\xi_\Gamma-\xi_1}.
\ee

The placement of the boundary point inside the boundary is the subtle yet important distinction from previous
methods like those in \cite{gibou2002second,Kreiss_Petersson_2006,kreiss:1940,kreiss:1292}. 
In previous work, the point is placed outside (and is usually referred to as a ghost point), see the right image of \fig{\ref{fig:LinebylineD}}. Then the linear interpolant will contain a factor  
\bd
\frac{\xi_{\textrm{GP}}-\xi_1}{\xi_\Gamma-\xi_1}
\ed
which can be arbitrarily large when $\xi_\Gamma$ is close to $\xi_{1}$. This causes small-cell stiffness or numerical overflow in the assembly process of the system of equations. 

For Poissons equation the boundary interface can be moved to the interior points if $\frac{\xi_{\textrm{GP}}-\xi_1}{\xi_\Gamma-\xi_1}<\textrm{threshold}\approx O(h)$, \cite{gibou2002second}, however, this will introduce an eigenvalue that scale as $O(\frac{1}{h})$. Such a large eigenvalue will lead to a very  restrictive time step for the wave equation.

In contrast, for the approach suggested above, $\xi_\Gamma \le \xi_{\textrm{BP}} < \xi_1 = \xi_{\textrm{BP}}+h$, and thus  
\bd
|g_{\Gamma}(\xi_{\textrm{BP}})|=\left|\frac{\xi_{\textrm{BP}}-\xi_1}{\xi_\Gamma-\xi_1}\right|\leq 1\;\text{and}\; |g_1(\xi_{\textrm{BP}})|=\left|\frac{\xi_{\textrm{BP}}-\xi_\Gamma}{\xi_\Gamma-\xi_1}\right|\leq 1.
\ed  
Substituting the value of $u_{\textrm{BP}}$ into the central difference approximation for $(\beta u_x)_x$ 
\bd
\frac{\beta_{i-\half,j}u_{\textrm{BP}}-(\beta_{i-\half,j}+\beta_{i+\half,j})u_{ij}+\beta_{i+\half,j}u_{i+1,j}}{h^2},
\ed
we have
\bd
\frac{1}{h^2}\Big(\; (-1+g_1(\xi_{\textrm{BP}}) )\beta_{i-\half,j} u_{ij} -\beta_{i+\half,j}(u_{ij}-u_{i+1,j})+g_\Gamma(\xi_{\textrm{BP}})u_\Gamma\beta_{i-\half,j}\Big).
\ed
Because only the diagonal element is modified and $|g_1(\xi_{\textrm{BP}})|\leq 1$, the resulting linear system is still symmetric and diagonally dominant with correct sign. As a result, the SPD structure of the discrete Laplacian operator is preserved.


\subsection{Radial Basis Function (RBF) interpolation}\label{sec:rbf}
Unfortunately there are some cases when the line-by-line approach cannot be used. For example, when the geometry is non-convex (there is an inward pointing smooth corner), as \fig{\ref{fig:RBF}}, it can happen that the intersection between the grid line and the  boundary does not exist, or it is far away. In \fig{\ref{fig:RBF}}, the stencil \eqref{eq:central_difference} requires that the leftmost interior boundary point is determined by the interpolant in the $x$-direction but the intersection with the boundary along the grid-line may be very far away and would result in a very inaccurate approximation. 
Of course, the value at the boundary point can be specified by interpolating along the $y$-direction. However, as this would result in a non-diagonal modification of the system matrix and break its symmetry, we instead propose an alternative approach. 

 
\begin{figure}[htb]
\begin{center}
\setlength{\unitlength}{0.8cm} 
\begin{picture}(7,7) 
\multiput(0,2)(0,2){1}{\line(1,0){7}}
\multiput(0,4)(0,1){3}{\line(1,0){7}}
\multiput(1,1)(1,0){6}{\line(0,1){6}}
\qbezier(5.5,7)(1,0.75)(0,6)
\put(2,3){\circle{0.25}}
\put(3,3){\circle*{0.25}}
\put(2.65,3.95){\circle*{0.25}}
\put(1.75,3.62){\circle*{0.25}}

\put(1.25,3.25){$u_{\Gamma_1}$} 
\put(2.25,4.3){$u_{\Gamma_2}$} 
\put(2.05,3.2){$u_{\textrm{BP}}$} 
\put(3.05,3.2){$u_{ij}$}

\put(3.8,5.3){$\Gamma$}
\put(2.05,2.5){$\bx_{\textrm{BP}}$} 
\put(3.05,2.5){$\bx_{ij}$} 

\thicklines
\put(0.5,3){\vector(1,0){6.4}}
\put(6.8,2.5){$\xi$}
\end{picture}
\caption{An illustration of the points used to construct the RBF interpolant for evaluating $u_{\textrm{BP}}$.\label{fig:RBF}}
\end{center}
\end{figure}
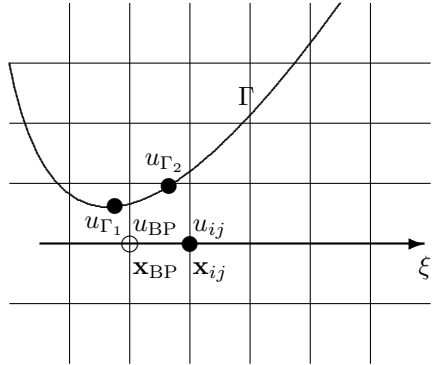

For geometries where the interior boundary point cannot be accurately determined by line-by-line interpolation, we instead use the radial basis function (RBF) interpolation. To do this we  find two suitable distinct points on the boundary of $\Omega$, and  utilize these two points and the interior computational point to interpolate at the interior boundary point with a  polynomial augmented RBF interpolant \cite{wright2003radial}. In this section, we first present the associated RBF-based interpolation and then discuss how to choose the two distinct points on the boundary.

Without loss of generality, we consider the case presented in \fig{\ref{fig:RBF}}. Let $\bx_{ij}=(x_i,y_j)$ be the computational point where we want the approximation to the Laplacian operator. Let $\bx_{\textrm{BP}}=(x_{\textrm{BP}},y_{\textrm{BP}})$ be the  boundary point needed in the stencil in the $x$-direction, and let $\bx_{\Gamma_1}$
and $\bx_{\Gamma_2}$
be the two points on the boundary that we have selected. At these points, the boundary conditions are $u_{\Gamma_{1}}$ and $u_{\Gamma_{2}}$ respectively. 

We use the following RBF and linear polynomial augmentation to interpolate at $\bx_{\textrm{BP}}$:
\begin{align}
\mathcal{I}_{\textrm{RBF}}u(\bx) = \lambda_{ij} \phi(||\bx-\bx_{ij}||)+\lambda_{\Gamma_1}\phi(||\bx-\bx_{\Gamma_1}||)+\lambda_{\Gamma_2}\phi(||\bx-\bx_{\Gamma_2}||)+\mu_1+\mu_2 x+\mu_3 y,  
\label{eq:rbf}
\end{align}
where $\bx=(x,y)$, $||\cdot||$ is the standard $l_2$ norm and $\phi(\cdot)$ is a radial basis function. The linear polynomial augumentation is required to obtain second order accuracy \cite{barnett2015robust,flyer2016role}. The coefficients $\blambda=(\lambda_{ij},\lambda_{\Gamma_1},\lambda_{\Gamma_2})^T$ and $\bmu=(\mu_1,\mu_2,\mu_3)^T$ are determined by solving the linear system
\be
B\left(\begin{matrix}
\blambda \\
\bmu 
\end{matrix}\right) 
=\left(\begin{matrix}
A & \Pi^T \\
\Pi & 0 
\end{matrix}\right) 
\left(\begin{matrix}
\blambda \\
\bmu 
\end{matrix}\right) 
=\left(\begin{matrix}
\bu \\
{\mathbf{0}}
\end{matrix}\right).
\ee 
Here, $\bu=(u_{ij},u_{\Gamma_1},u_{\Gamma_2})$,
\bd
A = \left(\begin{matrix}
\phi(0) & \phi(||\bx_{ij}-\bx_{\Gamma_1}||) &  \phi(||\bx_{ij}-\bx_{\Gamma_2}||) \\
\phi(||\bx_{\Gamma_1}-\bx_{ij}||) & \phi(0) &  \phi(||\bx_{\Gamma_1}-\bx_{\Gamma_2}||) \\
\phi(||\bx_{\Gamma_2}-\bx_{ij}||) &  \phi(||\bx_{\Gamma_2}-\bx_{\Gamma_1}||) &  \phi(0) 
\end{matrix}\right) 
\quad
\text{and}
\quad
\Pi = \left(\begin{matrix}
1 & 1 & 1\\
x_{ij} & x_{\Gamma_1} & x_{\Gamma_2} \\ 
y_{ij} & y_{\Gamma_1} & y_{\Gamma_2}
\end{matrix}\right).
\ed
The purpose of the last equation $\Pi \blambda = \mathbf{0}$ is to minimize the far-field growth \cite{fornberg2002observations}. 
The  value at the boundary point $\bx_{\textrm{BP}}$ is then
\begin{align}
u_{\textrm{BP}} =\mathcal{I}_{\textrm{RBF}}u(\bx_{\textrm{BP}}) = \left(\bphi_{\textrm{BP}}^T,\bpoly_{\textrm{BP}}^T\right)
B^{-1}\left(\begin{matrix}
\bu \\
{\mathbf{0}}
\end{matrix}\right)
\label{eq:rbf_ubp}
\end{align} 
with $\bphi_{\textrm{BP}}=\left(\phi(||\bx_{\textrm{BP}}-\bx_{ij}||), \phi(||\bx_{\textrm{BP}}-\bx_{\Gamma_1}||), \phi(||\bx_{\textrm{BP}}-\bx_{\Gamma_2}||)\right)^T$ and $\bpoly_{\textrm{BP}}=(1,x_{\textrm{BP}},y_{\textrm{BP}})^T$.  The only interior computational point involved in \eqref{eq:rbf_ubp} is $\bx_{ij}$, hence only the diagonal elements of the discrete Laplacian operator are modified and the symmetry is preserved.
 
We now describe how the two points $\bx_{\Gamma_i}$ ($i=1,2$)  on the boundary are determined (this is also described in Algorithm \ref{alg:rbf_bc_points}.) Our procedure is simple. If the points closest to $\bx_{\textrm{BP}}$ and $\bx_{ij}$ on the boundary are distinguishable, we choose these two points. Otherwise, we choose the point closest to $\bx_{\textrm{BP}}$ as $\bx_{\Gamma_1}$ and pick $\bx_{\Gamma_2}$ such that the angle between $\overrightarrow{\bx_{\textrm{BP}}\bx_{\Gamma_1}}$ and $\overrightarrow{\bx_{\textrm{BP}}\bx_{\Gamma_2}}$ is $\frac{\pi}{4}$. 

\begin{algorithm}[H]
 \caption{Given the computational point $\bx_{ij}$, its neighboring boundary point $\bx_{\textrm{BP}}$ and a pre-selected tolerance $\epsilon$, find the interpolation points $\bx_{\Gamma_1}$ and $\bx_{\Gamma_2}$. \label{alg:rbf_bc_points}} 
 \begin{algorithmic}
\STATE Find the point $\bx_{\Gamma_1}$ and the $\bx_{\Gamma_2}^{(0)}$ such that 
\begin{align}
\bx_{\Gamma_1}^{} = \arg\min_{\bx\in\Gamma_l}||\bx-\bx_{\textrm{BP}}||\quad\text{and}\quad\bx_{\Gamma_2}^{(0)} = \arg\min_{\bx\in\Gamma_l}||\bx-\bx_{ij}||.
\end{align}
\STATE {\bf{If}} 
\begin{align}
|| \bx_{\Gamma_1}-\bx_{\Gamma_2}^{(0)}||>\epsilon h,\label{eq:distinct_bc}
\end{align} 
{\bf{then}} $\bx_{\Gamma_2}=\bx_{\Gamma_2}^{(0)}$.
\STATE
{\bf{Otherwise}}, rotate the line determined by $\bx_{\Gamma_1}$ and $\bx_{\textrm{BP}}$ counterclockwise by $\frac{\pi}{4}$ and find the intersection of this line with $\Gamma$. Choose the intersection point as $\bx_{\Gamma_2}$.
\end{algorithmic}
 \end{algorithm}

\subsection{Choice of interpolation strategy}
We note that the RBF interpolation can always be applied. However, there are two advantages of the line-by-line interpolation over the RBF interpolation. First, it is more straightforward and second, it is possible to prove that it will preserve diagonal dominance. In what follows, if we apply the line-by-line interpolation wherever possible and only use the RBF interpolation when the line-by-line approach breaks down, we say the embedded boundary method is {\bf mixed}. If the RBF interpolation is applied everywhere, we say the embedded boundary method is {\bf RBF-based}. 

Numerically, we have observed that the condition (inequality {\eqref{eq:distinct_bc}) that the two points on the boundary used in the RBF interpolant always are determined to be distinct for $\epsilon=0.025$ if the  mixed EB method is used. However, the inequality \eqref{eq:distinct_bc} may  not always hold if  the RBF-based EB method is used. An often occurring case when this situation may arise is when the closest points to $\bx_{\textrm{BP}}$ and $\bx_{ij}$ are the same point (see \fig{\ref{fig:violate_distinct}}). Then, in the mixed EB method, the line-by-line interpolation is used. We emphasize that although we have not been able to prove that the mixed EB method or the RBF-based method always lead to a SPD system we have not encounter any numerical examples where the SPD property is lost. Below in Section \ref{sec:spd_alg}, we describe a simple algorithm to a-priori check whether the SPD structure is preserved. This algorithm does not require the computation of eigenvalues.

\begin{rem}
To impose the boundary condition through RBF interpolation, we choose one interpolation point as the interior computational point and put the other two interpolation points on the physical boundary. An alternative choice is to put all the three interpolation points on the physical boundary. This alternative choice only modifies the right hand side, and hence always results in a diagonally dominant SPD system. However, in numerical experiments we observe that, without using the information of the interior computational point, this alternative RBF interpolation strategy could sometimes lead to  $O(1)$ errors. Therefore we do not use and do not recommend this alternative strategy.
\end{rem}

 \begin{figure}[htb]
\begin{center}
\setlength{\unitlength}{0.8cm} 
\begin{picture}(5,5) 
\multiput(0,2)(0,2){2}{\line(1,0){6}}
\multiput(0,2)(0,1){3}{\line(1,0){4}}
\multiput(1,1)(1,0){5}{\line(0,1){4}}
\qbezier(2.0,5.0)(0.25,3.0)(2.0,1.0)
\qbezier(2.0,5.0)(0.25,3.0)(2.0,1.0)
\put(2,3){\circle{0.25}}
\put(3,3){\circle*{0.25}}
\put(1.15,3){\circle*{0.25}}

\put(1.75,3.2){$\bx_{\textrm{BP}}$} 
\put(1.05,4.25){$\Gamma$}
\put(2.75,2.5){$\bx_{ij}$} 
\put(0.25,3.3){$\bx_{\Gamma_1}$}
\put(0.5,2.75){\verteq}
\put(0.25,2.6){$\bx_{\Gamma_2}$}
\thicklines
\put(0.5,3){\vector(1,0){5.25}}
\put(5.8,2.5){$\xi$}
\end{picture}

\caption{An example where the condition \eqref{eq:distinct_bc} is violated. To handle this case the rotation step in the  RBF-based algorithm is employed.\label{fig:violate_distinct}}
\end{center}
\end{figure}
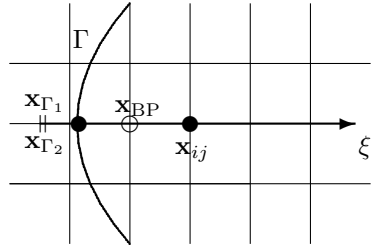

\begin{algorithm}[H]
 \caption{Given a computational domain $\Omega$, a Cartesian mesh, source $f(x,y)$, and boundary conditions, compute the numerical solution to \eqref{eq:peq}. \label{alg:cut_cell} }
 \begin{algorithmic}
\STATE{{\bf{Step 1:}}} Locate the interior computational points and the interior boundary points. Obtain the total number of computational points $N_{\textrm{C}}$.   
\STATE {\bf{Step 2:}} Assemble the discrete Laplacian operator $\mathbf{L}\in \mathbb{R}^{N_{\textrm{C}}\times N_{\textrm{C}}}$ and the right hand side $\mathbf{b}\in \mathbb{R}^{N_{\textrm{C}}}$: apply the difference approximation \eqref{eq:central_difference} at the computational points $\bx_k^{\textrm{C}}=(x_{i^{(k)}},y_{j^{(k)}})$, $1\leq k\leq N_{\textrm{C}}$. If $(x_{i^{(k)}\pm1},y_{j^{(k)}\pm1})$ is a boundary point, temporarily neglect its contribution.
\STATE{\bf{Step 3:}} Impose the boundary condition to correct $\mathbf{L}$ and $\mathbf{b}$:
  \STATE {\bf{If}} $(x_{i^{(k)}-1},y_{j^{(k)}})$ is a boundary point:
 \STATE {\qquad\bf{If}} $i^{(k)}-2\leq 0$ or $(x_{i^{(k)}-2},y_{j^{(k)}})$ is outside $\Omega$, {\bf{then}} apply the line-by-line interpolation to impose boundary condition and correct $\mathbf{L}_{kk}$ and $\mathbf{b}_k$.
\STATE {\qquad\bf{Otherwise}},  apply the RBF interpolation to impose the boundary condition and correct $\mathbf{L}_{kk}$ and $\mathbf{b}_k$. 
  \STATE{\bf{Endif}}
  \STATE If $(x_{i^{(k)}+1},y_{j^{(k)}})$ or $(x_{i^{(k)}},y_{j^{(k)}\pm1})$ is a boundary point, impose the boundary condition similarly.
  \STATE If the RBF interpolation was used, use the algorithm in Section \ref{sec:spd_alg} to check whether the SPD structure is preserved.
\STATE{\bf{Step 4:}} Solve the linear equation $\mathbf{L}\mathbf{u}=\mathbf{b}$ with the conjugate gradient (CG) method and an AMG preconditioner.
\end{algorithmic}
 \end{algorithm}

\subsection{An algorithm to check the SPD structure for constant $\beta$\label{sec:spd_alg}}

It is well known that the conjugate gradient method (CG) with the classical algebraic multigrid preconditioner (AMG) \cite{ruge1987algebraic} is efficient for SPD matrices. The proposed embedded boundary method always results in a symmetric linear system. For many cases, the resulting linear system is also diagonally dominant, which is a sufficient condition for  symmetric positive definiteness. However, for certain geometric configurations the matrix may not be diagonally dominant. For these (rarely occurring) cases we present a simple algorithm to a-priori check whether the matrix is positive definite. 

To derive conditions guaranteeing that the discrete matrix corresponding to our discretization of the Laplacian is SPD, we need a result for the one dimensional three-point central difference discretization for $u_{xx}$ with the same type of boundary modification as in the embedded boundary methods described above.

Consider a discretization along a grid-line in the $x$-direction with $n$ grid points. Assume that the boundary conditions on each side have been imposed by modifying the first and last diagonal element in the matrix (and the right hand side vector). Then the resulting matrix can be written   
\begin{align}
	\mathcal{D}^{(n)}(a,b) =
	\begin{cases}
		\left(\begin{matrix}a\end{matrix}\right), \quad n=1,\\
		\left(\begin{matrix}
			a & -1 \\
			-1 & b
		\end{matrix}\right), \quad n=2,\\
		\left(\begin{matrix}
			a & -1 & 0 & \dots & 0 & 0\\
			-1 &  2 &-1& \dots  & 0 & 0\\
			0 & -1 & 2 & \dots  & 0 & 0\\
			\vdots & &\vdots & \vdots & \vdots & \vdots\\
			0 & 0 & 0 & \dots & 2 & -1\\
			0 & 0 & 0 & \dots &-1  & b
		\end{matrix}\right) \in \mathbb{R}^{n\times n}, \quad n\geq 3.
	\end{cases} \label{eq:tridiag}
\end{align}

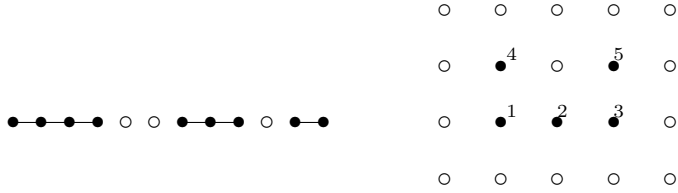
\begin{figure}[!htbp]
	\centering
	\setlength{\unitlength}{1.5cm} 
	\begin{picture}(3,1) 
		\multiput(0,1)(0,1){1}{\line(1,0){0.75}}
		\multiput(1.5,1)(0,1){1}{\line(1,0){0.5}}
		\multiput(2.5,1)(0,1){1}{\line(1,0){0.25}}
		\put(0,1){\circle*{0.1}}
		\put(0.25,1){\circle*{0.1}}
		\put(0.5,1){\circle*{0.1}}
		\put(0.75,1){\circle*{0.1}}
		\put(1.0,1){\circle{0.1}}
		\put(1.25,1){\circle{0.1}}
		\put(1.5,1){\circle*{0.1}}
		\put(1.75,1){\circle*{0.1}}
		\put(2.0,1){\circle*{0.1}}
		\put(2.25,1){\circle{0.1}}
		\put(2.5,1){\circle*{0.1}}
		\put(2.75,1){\circle*{0.1}}
	\end{picture}
	\hspace{1cm}
	\begin{picture}(3,1) 
		\put(0,0.5){\circle{0.1}}
		\put(0.5,0.5){\circle{0.1}}
		\put(1.0,0.5){\circle{0.1}}
		\put(1.5,0.5){\circle{0.1}}
		\put(2.0,0.5){\circle{0.1}}
		\put(0,1.0){\circle{0.1}}
		\put(0.5,1.0){\circle*{0.1}}
		\put(1.0,1.0){\circle*{0.1}}
		\put(1.5,1.0){\circle*{0.1}}
		\put(2.0,1.0){\circle{0.1}}
		\put(0.55,1.05){\scriptsize$1$}
		\put(1.0,1.05){\scriptsize$2$}
		\put(1.5,1.05){\scriptsize$3$}
		\put(0,1.5){\circle{0.1}}
		\put(0.5,1.5){\circle*{0.1}}
		\put(1.0,1.5){\circle{0.1}}
		\put(1.5,1.5){\circle*{0.1}}
		\put(2.0,1.5){\circle{0.1}}
		\put(0.55,1.55){\scriptsize$4$}
		\put(1.5,1.55){\scriptsize$5$}
		\put(0,2){\circle{0.1}}
		\put(0.5,2){\circle{0.1}}
		\put(1.0,2){\circle{0.1}}
		\put(1.5,2){\circle{0.1}}
		\put(2.0,2){\circle{0.1}}
	\end{picture}
	\caption{Left: computational points and non-computational points along a grid line. Here the ordering of the unknowns would be increasing from left to right.  Right: lexicographic ordering of the computational points results in a block diagonal discretization matrix (here with block sizes 3,1,1. Solid points: computational points. Empty points: non-computational points. \label{fig:spd_geometry}}
\end{figure}

The following Lemma (whose proof is given in Appendix \ref{appendix:proof}) gives conditions on $a$ and $b$ which  guarantee that $\mathcal{D}^{(n)}(a,b)$ is SPD.
\begin{lem}\label{lem:1d_spd}
The matrix $\mathcal{D}^{(n)}(a,b)$ is symmetric positive definite, if one of the following conditions is satisfied:
\begin{enumerate}
\item 
 \begin{align}
n=1,~{\rm and }~a>0.
\end{align}
\item  
\begin{align}
n=2,~ a>0,~{\rm and}~ab>1.
\end{align}
\item 
 \begin{align}
n\geq 3, \quad a>\frac{n-2}{n-1},\quad b>\frac{n-2}{n-1}\quad\text{\rm and}\quad a> \frac{(n-2)b-(n-3)}{(n-1)b-(n-2)}.
\end{align}
\end{enumerate}
\end{lem}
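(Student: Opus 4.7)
}
The plan is to apply Sylvester's criterion: a real symmetric matrix is positive definite iff all of its leading principal minors are strictly positive. The cases $n=1$ and $n=2$ are then immediate: for $n=1$ the only minor is $a$; for $n=2$ the minors are $a$ and $ab-1$, giving exactly the stated conditions. Everything nontrivial happens for $n\ge 3$.

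For $n\ge 3$, I would compute all $n$ leading principal minors of $\mathcal{D}^{(n)}(a,b)$. Note that the top-left $k\times k$ principal submatrix for $1\le k\le n-1$ is the tridiagonal matrix whose $(1,1)$ entry is $a$, whose remaining diagonal entries are $2$, and whose off-diagonals are $-1$. Let $D_k$ denote its determinant. Expanding along the last row gives the recurrence
\begin{equation*}
D_k = 2 D_{k-1} - D_{k-2}, \qquad k\ge 3,
\end{equation*}
with initial data $D_1=a$, $D_2=2a-1$. Because the characteristic polynomial $\lambda^2-2\lambda+1$ has $\lambda=1$ as a double root, the general solution is affine in $k$; matching initial conditions gives the closed form
\begin{equation*}
D_k = k\,a-(k-1), \qquad 1\le k\le n-1.
\end{equation*}
The positivity $D_k>0$ for all such $k$ is equivalent to $a>(k-1)/k$ for all $k\le n-1$, whose binding case is $k=n-1$, giving the first condition $a>(n-2)/(n-1)$.

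For the full determinant $\det \mathcal{D}^{(n)}(a,b)$, expansion along the last row yields
\begin{equation*}
\det \mathcal{D}^{(n)}(a,b) = b\,D_{n-1}-D_{n-2}=b\bigl[(n-1)a-(n-2)\bigr]-\bigl[(n-2)a-(n-3)\bigr].
\end{equation*}
Now comes the main algebraic step: I would rearrange this expression in two equivalent ways,
\begin{equation*}
\det \mathcal{D}^{(n)}(a,b) = a\bigl[(n-1)b-(n-2)\bigr]-\bigl[(n-2)b-(n-3)\bigr] = b\bigl[(n-1)a-(n-2)\bigr]-\bigl[(n-2)a-(n-3)\bigr],
\end{equation*}
which makes manifest the symmetry in $a$ and $b$. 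Under the hypothesis $b>(n-2)/(n-1)$, the bracketed coefficient $(n-1)b-(n-2)$ is strictly positive, so $\det \mathcal{D}^{(n)}(a,b)>0$ is equivalent to the third stated inequality $a>[(n-2)b-(n-3)]/[(n-1)b-(n-2)]$.

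Finally, I would verify that the hypothesis $b>(n-2)/(n-1)$ is consistent (and actually forced) by the third inequality together with $a>(n-2)/(n-1)$: using the symmetric rearrangement above, the third inequality can be rewritten as $b>[(n-2)a-(n-3)]/[(n-1)a-(n-2)]$, and one checks that the right-hand side is at least $(n-2)/(n-1)$ whenever $a>(n-2)/(n-1)$. Thus the three conditions together give strict positivity of every leading principal minor, and Sylvester's criterion concludes the proof. The main obstacle I anticipate is the clean derivation of the closed form $D_k=ka-(k-1)$ and the symmetric rearrangement of the final determinant, since the statement of the lemma uses the $(a;b)$-asymmetric form and the naive expansion produces the $(b;a)$-form; showing these are the same expression is a short but slightly fiddly calculation.
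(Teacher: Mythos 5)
Your proposal is correct, and it reaches the conclusion by a genuinely different (and arguably more transparent) route than the paper. Both arguments rest on Sylvester's criterion applied to the leading principal minors, and both treat $n=1,2$ by direct computation. For $n\ge 3$, however, the paper never writes down the minors explicitly: it proves positivity by a double induction on $n$, using the determinant identities $Q^{(n)}_{n-1}(a)=a\,Q^{(n-1)}_{n-2}\left(2-\frac{1}{a}\right)$ and $P^{(n)}(a,b)=a\,P^{(n-1)}\left(2-\frac{1}{a},b\right)$, and then checking that the hypotheses propagate under the map $a\mapsto 2-\frac{1}{a}$. You instead solve the tridiagonal recurrence $D_k=2D_{k-1}-D_{k-2}$ in closed form, $D_k=ka-(k-1)$, and obtain $\det\mathcal{D}^{(n)}(a,b)=(n-1)ab-(n-2)(a+b)+(n-3)$ explicitly. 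This buys you several things the paper's induction does not make visible: the conditions are seen to be not just sufficient but exactly equivalent to positivity of the minors (so the lemma is sharp in that sense); the binding constraint $a>\frac{n-2}{n-1}$ is identified as the $k=n-1$ case of $a>\frac{k-1}{k}$; and the symmetry of the determinant in $a$ and $b$ is manifest, which explains why the asymmetric-looking third condition in the statement is in fact interchangeable with its $(a\leftrightarrow b)$ counterpart. The paper's inductive route avoids solving the recurrence and generalizes more readily to settings where a closed form is unavailable, but for this constant-coefficient tridiagonal matrix your direct computation is shorter and easier to verify. Your closing consistency check (that $b>\frac{n-2}{n-1}$ is compatible with the other two conditions) is not logically required for sufficiency, since all three inequalities are hypotheses, but it is harmless.
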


As a result of Lemma \ref{lem:1d_spd},  the following corollary gives us two conditions that are more straightforward to check.  
\begin{cor}\label{cor:spd}
The matrix $\mathcal{D}^{(n)}(a,b)$ is symmetric positive definite if $a>1$ and $b>1$.
\end{cor}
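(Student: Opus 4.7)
The plan is to derive Corollary \ref{cor:spd} directly from Lemma \ref{lem:1d_spd} by verifying that the hypotheses $a>1$ and $b>1$ imply the corresponding condition in each of the three cases of the lemma.

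First I would dispose of the small cases quickly. For $n=1$, condition (1) asks only that $a>0$, which is immediate from $a>1$. For $n=2$, condition (2) asks that $a>0$ and $ab>1$; since $a>1>0$ and $b>1$, we have $ab>1\cdot 1=1$. These two cases are essentially one-line checks and I would dispatch them first.

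The main step is $n\geq 3$, where I need to verify the three inequalities
\begin{align*}
a>\frac{n-2}{n-1},\qquad b>\frac{n-2}{n-1},\qquad a>\frac{(n-2)b-(n-3)}{(n-1)b-(n-2)}.
\end{align*}
The first two are immediate since $\frac{n-2}{n-1}<1$ for all $n\geq 3$. For the third, my strategy is to show that the right-hand side is bounded above by $1$ whenever $b>1$, after which $a>1$ finishes the argument. To make the comparison rigorous I would first observe that the denominator $(n-1)b-(n-2)>(n-1)-(n-2)=1>0$ under $b>1$, so the inequality
\begin{align*}
\frac{(n-2)b-(n-3)}{(n-1)b-(n-2)}<1
\end{align*}
is equivalent to $(n-2)b-(n-3)<(n-1)b-(n-2)$, which simplifies to $b>1$, exactly our hypothesis. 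Combined with $a>1$, this yields the third inequality, and the corollary follows.

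The only potential subtlety I anticipate is sign-tracking when clearing the denominator in the third inequality; I would make sure to explicitly record that $(n-1)b-(n-2)>0$ before cross-multiplying, so that the direction of the inequality is preserved. Otherwise the proof is a short verification and requires no new machinery beyond Lemma \ref{lem:1d_spd}.
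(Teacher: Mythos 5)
Your verification is correct and follows exactly the route the paper intends: the corollary is stated as an immediate consequence of Lemma \ref{lem:1d_spd}, and your case-by-case check (including the sign of the denominator $(n-1)b-(n-2)$ before cross-multiplying) supplies precisely the short argument the paper leaves implicit. Nothing is missing.
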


In general there may be holes in the computational domain and the discretization along a grid line is then divided into $S$ segments (see the example in the left picture in Figure \ref{fig:spd_geometry}). Suppose that the $k$-th segment contains $n_k$ computational points ordered from left to right, then the matrices corresponding to the discretization on each grid line segment can be arranged in the following block diagonal form
\begin{align}
\mathcal{D}^{{\rm line}}=\left(\begin{matrix}
\mathcal{D}^{(n_1)}(a_1,b_1) & & & \\
& \mathcal{D}^{(n_2)}(a_2,b_2) & & \\
& & \ddots  &\\
& & & \mathcal{D}^{(n_S)}(a_S,b_S) 
\end{matrix}\right).\label{eq:general_1d_matrix}
\end{align}
The matrix $\mathcal{D}^{{\rm line}}$ thus contains the discretization of the second derivative along a single, say, horizontal grid line. As a direct result of Lemma \ref{lem:1d_spd} and the block structure of \eqref{eq:general_1d_matrix}, we have the following Theorem.
\begin{thm}\label{thm:1d_spd}
The matrix $\mathcal{D}^{\rm{line}}$, defined in \eqref{eq:general_1d_matrix}, is symmetric positive definite, if for each of the blocks, $\mathcal{D}^{(n_k)}(a_k,b_k)$, the numbers $a_k,b_k,n_k$ satisfy one of the conditions in Lemma \ref{lem:1d_spd}.
\end{thm}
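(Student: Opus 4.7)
The plan is to reduce the statement immediately to Lemma \ref{lem:1d_spd} by exploiting the block diagonal structure in \eqref{eq:general_1d_matrix}. Symmetry of $\mathcal{D}^{\rm line}$ is automatic: every diagonal block $\mathcal{D}^{(n_k)}(a_k,b_k)$ is symmetric by inspection of \eqref{eq:tridiag}, and the off-diagonal blocks in \eqref{eq:general_1d_matrix} are zero. So only positive definiteness requires argument.

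For positive definiteness I would partition an arbitrary test vector $\mathbf{v}\in\mathbb{R}^{N}$, with $N=n_1+\cdots+n_S$, conformally with the block structure as $\mathbf{v}=(\mathbf{v}_1^T,\ldots,\mathbf{v}_S^T)^T$ where $\mathbf{v}_k\in\mathbb{R}^{n_k}$. Block diagonality gives the quadratic form decomposition
\begin{align*}
\mathbf{v}^T\mathcal{D}^{\rm line}\mathbf{v} \;=\; \sum_{k=1}^{S}\mathbf{v}_k^T\,\mathcal{D}^{(n_k)}(a_k,b_k)\,\mathbf{v}_k.
\end{align*}
By hypothesis each triple $(n_k,a_k,b_k)$ satisfies one of the three conditions of Lemma \ref{lem:1d_spd}, so every block $\mathcal{D}^{(n_k)}(a_k,b_k)$ is SPD. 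Hence every summand above is nonnegative, and is strictly positive unless $\mathbf{v}_k=\mathbf{0}$. Given $\mathbf{v}\neq\mathbf{0}$, at least one component block $\mathbf{v}_k$ is nonzero and the corresponding term contributes strictly positively, so the entire sum is strictly positive.

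There is essentially no obstacle here: the real work is already contained in Lemma \ref{lem:1d_spd}, and Theorem \ref{thm:1d_spd} is the routine packaging that lifts the single-segment SPD conclusion to the multi-segment configuration produced when a grid line crosses one or more holes in $\Omega$. The only point worth verifying in passing is that distinct segments really do decouple into independent diagonal blocks with their own modified endpoint entries $(a_k,b_k)$; this is clear because consecutive segments are separated by non-computational points, so the three-point stencils centered in different segments share no interior unknowns and therefore contribute no coupling entries between blocks.
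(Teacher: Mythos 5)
Your proof is correct and matches the paper's reasoning: the paper states Theorem \ref{thm:1d_spd} as a direct consequence of Lemma \ref{lem:1d_spd} and the block diagonal structure of \eqref{eq:general_1d_matrix}, which is exactly the quadratic-form decomposition you write out explicitly. No issues.
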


In higher dimensions there will be many grid lines and many segments but, assuming lexicographic ordering (see the right picture in Figure \ref{fig:spd_geometry}), the discretization matrix for the second derivative in $x$ will still be block diagonal with each block being a tridiagonal matrix on the form (\ref{eq:tridiag}). Let this ``two dimensional'' block diagonal matrix with tridiagonal blocks be called $\mathcal{D}_{xx}$. Similarly let the matrix $\mathcal{D}_{yy}$ be a block diagonal matrix with tridiagonal blocks corresponding to the discretization of the second derivative in $y$ along all grid lines but now with an ordering of the degrees of freedom that is fast in the $y$-index. Finally let $P$ be the permutation matrix that converts between the fast-in-$x$ (lexicographic) and fast-in-$y$ ordering. Then, using the lexicographic ordering the approximation of the Laplacian is $-\mathcal{L}=\mathcal{D}_{xx}+P^T\mathcal{D}_{yy}P$ and we have the following theorem.

\begin{thm}\label{thm:spd_summation}
Consider a two dimensional embedded boundary discretization on a grid with $N_c$ computational points and resulting in block diagonal matrices $\mathcal{D}_{xx}\in\mathbb{R}^{N_c\times N_c}$ and $\mathcal{D}_{yy}\in\mathbb{R}^{N_c\times N_c}$ in the fast-in-$x$  and fast-in-$y$ orderings, respectively. Assume that all of the one dimensional one segment discretization matrices (in both the $x$ and $y$ direction) satisfy the conditions in Lemma \ref{lem:1d_spd}, then the matrix  $-\mathcal{L}=\mathcal{D}_{xx}+P^T\mathcal{D}_{yy}P$ is symmetric positive definite.   
\end{thm}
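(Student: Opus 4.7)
The plan is to reduce the claim to Theorem \ref{thm:1d_spd} applied separately in each coordinate direction, combined with two elementary linear-algebra facts: similarity by a permutation matrix preserves the SPD property, and the sum of two SPD matrices is SPD. I do not anticipate any serious obstacle, since the hard work has already been done in Lemma \ref{lem:1d_spd} and Theorem \ref{thm:1d_spd}; the only item requiring care is the bookkeeping involving the ordering permutation $P$.

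First I would apply Theorem \ref{thm:1d_spd} to $\mathcal{D}_{xx}$. In the fast-in-$x$ (lexicographic) ordering, each diagonal block of $\mathcal{D}_{xx}$ is exactly the one-segment tridiagonal matrix $\mathcal{D}^{(n)}(a,b)$ associated with a single horizontal grid-line segment, so $\mathcal{D}_{xx}$ is precisely a matrix of the form \eqref{eq:general_1d_matrix}. By hypothesis every such block satisfies one of the three conditions of Lemma \ref{lem:1d_spd}, and Theorem \ref{thm:1d_spd} then gives that $\mathcal{D}_{xx}$ is SPD. The identical argument, with horizontal replaced by vertical grid lines, shows that $\mathcal{D}_{yy}$ is SPD in the fast-in-$y$ ordering.

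Next I would pass from $\mathcal{D}_{yy}$ to $P^T \mathcal{D}_{yy} P$. Since $P$ is a permutation matrix, $P^T = P^{-1}$, so
\[
(P^T \mathcal{D}_{yy} P)^T = P^T \mathcal{D}_{yy}^T P = P^T \mathcal{D}_{yy} P,
\]
preserving symmetry. For any nonzero $v$, set $w = Pv$; invertibility of $P$ gives $w\neq 0$, and SPD-ness of $\mathcal{D}_{yy}$ yields
\[
v^T P^T \mathcal{D}_{yy} P\, v = w^T \mathcal{D}_{yy} w > 0,
\]
so $P^T \mathcal{D}_{yy} P$ is SPD. Finally, since both $\mathcal{D}_{xx}$ and $P^T \mathcal{D}_{yy} P$ are symmetric, their sum $-\mathcal{L}$ is symmetric, and for every nonzero $v$,
\[
v^T(-\mathcal{L})v = v^T \mathcal{D}_{xx} v + v^T P^T \mathcal{D}_{yy} P\, v > 0,
\]
which is the desired SPD property. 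The only conceptual subtlety is recognizing that $\mathcal{D}_{yy}$ is block diagonal (and hence immediately covered by Theorem \ref{thm:1d_spd}) only in its native fast-in-$y$ ordering, which is why the conjugation by $P$ is unavoidable when expressing $-\mathcal{L}$ in the common lexicographic ordering.
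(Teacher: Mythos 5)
Your proposal is correct and follows essentially the same route as the paper's proof: invoke Theorem \ref{thm:1d_spd} to get that $\mathcal{D}_{xx}$ and $\mathcal{D}_{yy}$ are SPD, observe symmetry, and conclude positivity of $-v^T\mathcal{L}v$ by summing the two positive quadratic forms. You merely spell out the step $v^T P^T \mathcal{D}_{yy} P v = (Pv)^T \mathcal{D}_{yy} (Pv) > 0$, which the paper leaves implicit.
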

\begin{proof}
The matrix is manifestly symmetric. Let $v\in \mathbb{R}^{N_C}$ be an arbitrary non-zero vector. With $\mathcal{D}_{xx}$ and $\mathcal{D}_{yy}$ being symmetric positive definite, $ v^T \mathcal{D}_{xx} v>0$ and $v^T P^T \mathcal{D}_{yy}Pv>0$ and so  
\begin{align}
-v^T \mathcal{L} v = v^T \mathcal{D}_{xx} v+ v^T P^T\mathcal{D}_{yy}Pv>0+0=0.
\end{align}
\end{proof}

At the implementation level, when we check the conditions in Lemma \ref{lem:1d_spd}, the width of each 1D segments can be found
based on the mask matrix, and the values of $a_k$ and $b_k$ can be computed based on the boundary corrections along the horizontal (or vertical) direction.

\section{Numerical results}\label{sec:num}
We now demonstrate performance of our method through series of numerical examples including Poisson's equation, the Helmholtz equation, the heat equation and the wave equation. Throughout the $l_2$ error and $l_{\infty}$ error are computed by
\begin{align}
\mathcal{E}_{l_2} = \sqrt{\sum_{i,j} h^2\left(u_{ij}-u_{\textrm{exact}}(x_i,y_j)\right)^2},
\;
\mathcal{E}_{l_\infty} = \max_{i,j}\left|u_{ij}-u_{\textrm{exact}}(x_i,y_j)\right|.
\end{align}
Throughout this section, DOF stands for the degrees of freedom.
In the RBF interpolation, we choose the polyharmonic spline $\phi(r)=r^3$ as the radial basis function. For the Poisson and the heat equation with implicit time stepper, the linear solver is chosen as the conjugate gradient (CG) method  with a classic algebraic multigrid (AMG) preconditioner \cite{ruge1987algebraic}. Both the $V$-cycle and $W$-cycle are considered in the AMG  method. The iterative solver is considered to have converged when the relative residual smaller than $10^{-12}$. In all of our numerical experiments, we observe that the resulting discrete Laplacian operator is always SPD. 

Our code is implemented in Julia, and we use the AMG preconditioner and CG solver from the open source packages \verb+AlgebraicMultigrid.jl+ and \verb+IterativeSolvers.jl+.

\subsection{Poisson's equation in a non-convex geometry}
\begin{figure}[htb]
\centering
\includegraphics[width=0.45\textwidth,trim={0.2cm 0.2cm 1.75cm 0.2cm},clip]{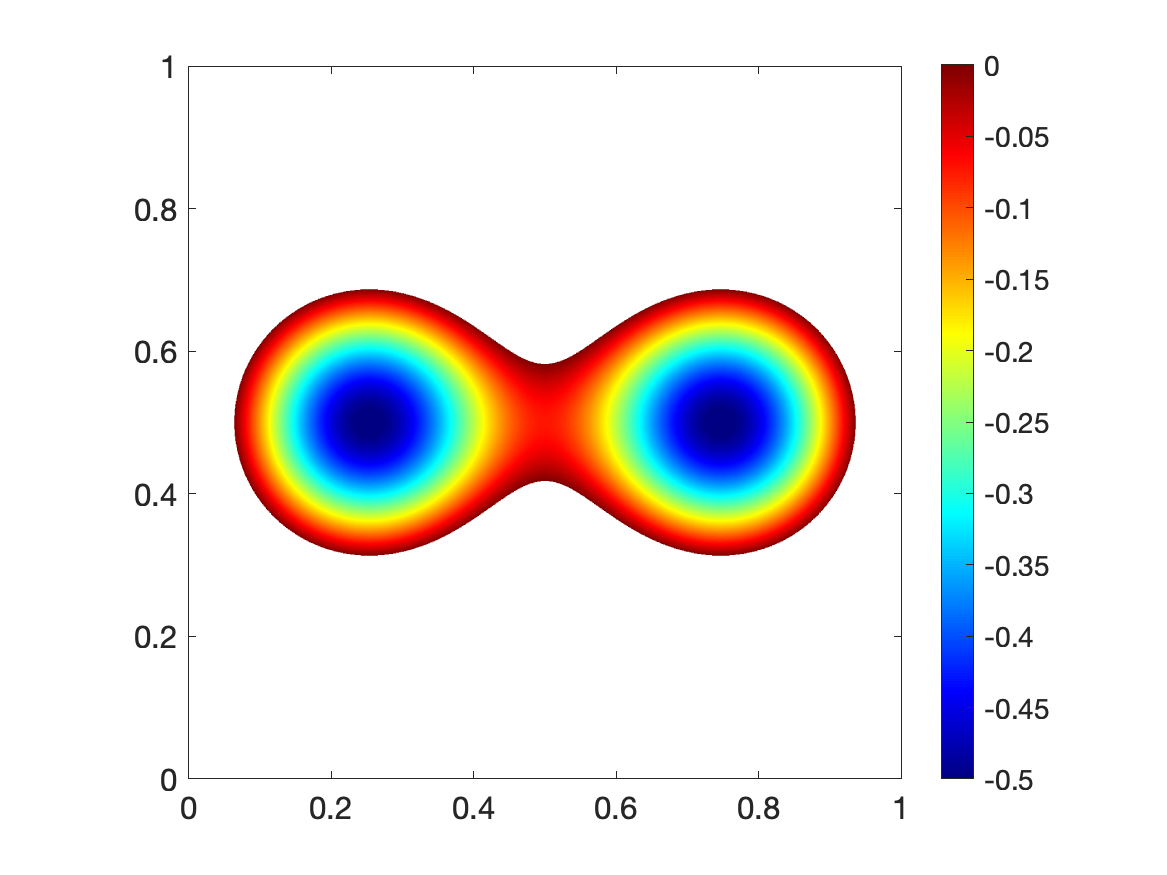}
\includegraphics[width=0.45\textwidth,trim={0.2cm 0.2cm 1.75cm 0.2cm},clip]{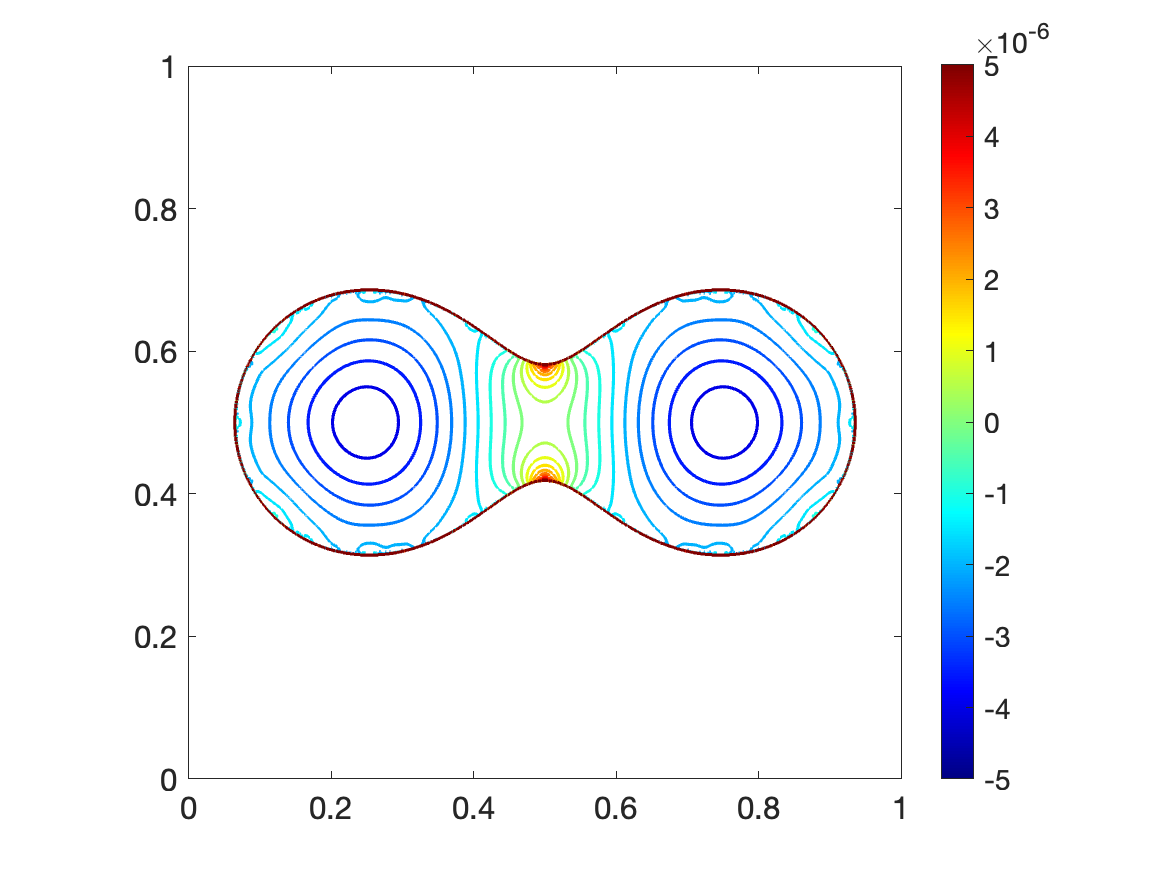}
\caption{Poisson's equation in two dimensions, $\nabla\cdot(\beta\nabla u) = f$, with Dirichlet boundary conditions on a glass-shaped domain. Left: Numerical solution with $N=1280$. Right: The error between the numerical and the exact solution. \label{fig:glass1}}
 \end{figure}
We consider a glass-shaped and non-convex geometry determined by the level-set function
\begin{align}
\psi(x,y) =0.5-e^{-20( (x-0.25)^2+(y-0.5)^2)} -e^{-20( (x-0.75)^2+(y-0.5)^2)}.
\end{align}
The Dirichlet boundary condition and the source function are chosen such that $\psi(x,y)$ is an exact solution with $\beta(x,y)=-8$. An $(N+1)\times (N+1)$ uniform mesh partitioning $[0,1]\times[0,1]$ is used. With this non-convex geometry, the RBF interpolation will be activated in the mixed EB method.
\begin{figure}[htb]
\centering
\includegraphics[width=0.32\textwidth,trim={0.2cm 0.2cm 1.9cm 0.2cm},clip]{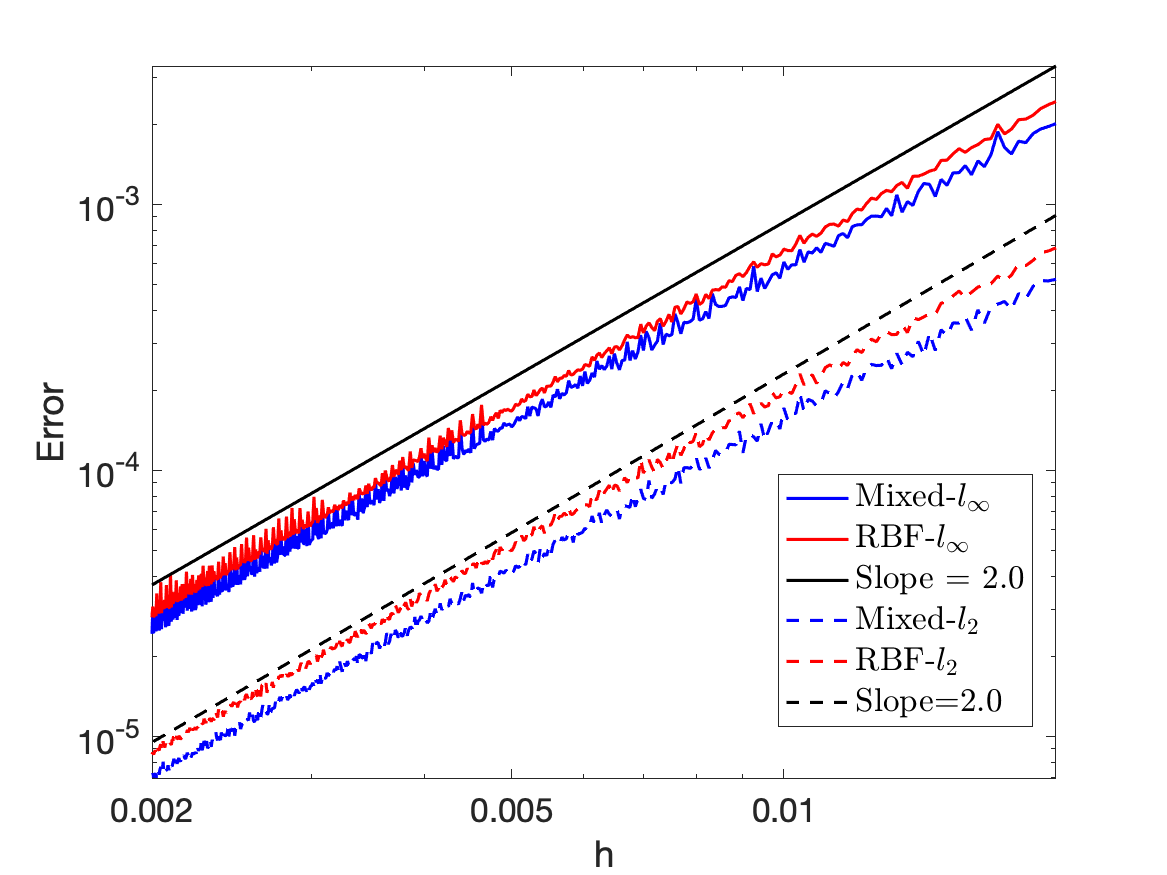}
\includegraphics[width=0.32\textwidth,trim={0.2cm 0.2cm 1.9cm 0.2cm},clip]{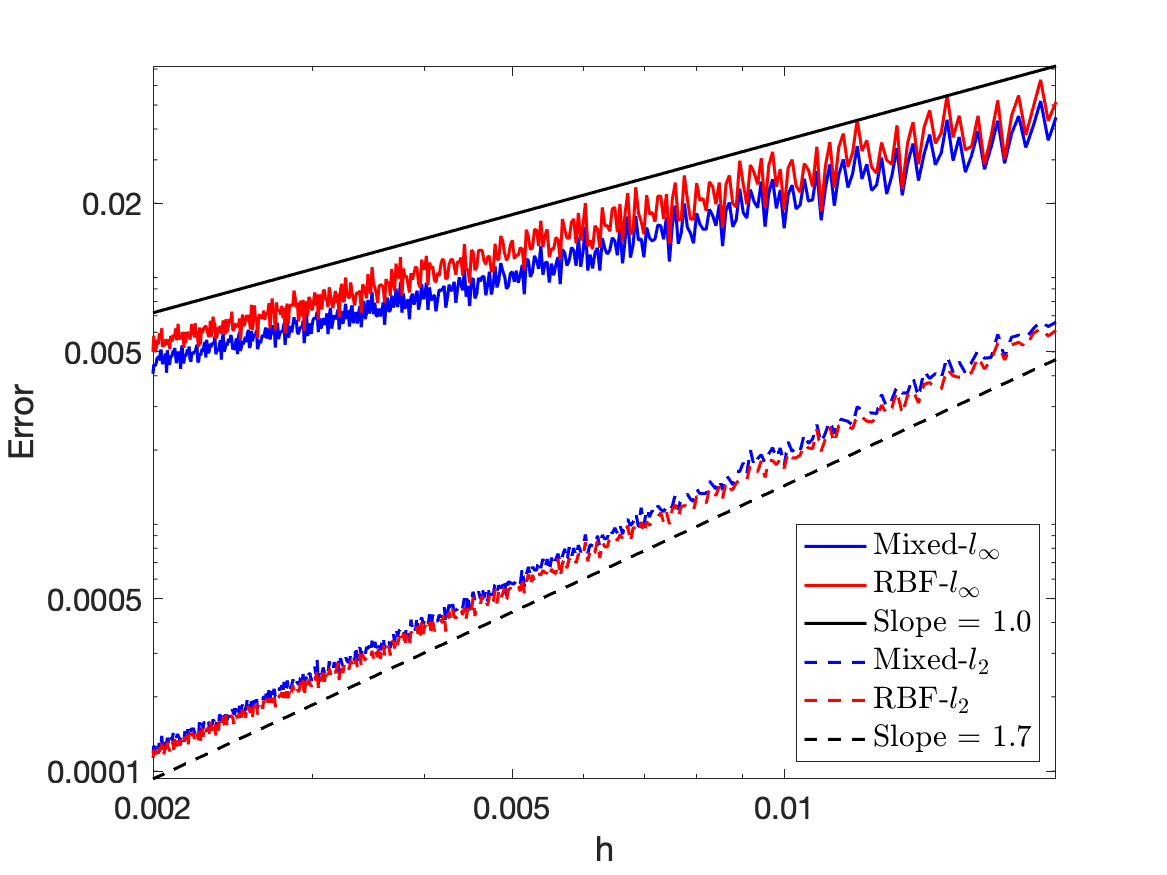}
\includegraphics[width=0.32\textwidth,trim={0.2cm 0.2cm 1.9cm 0.2cm},clip]{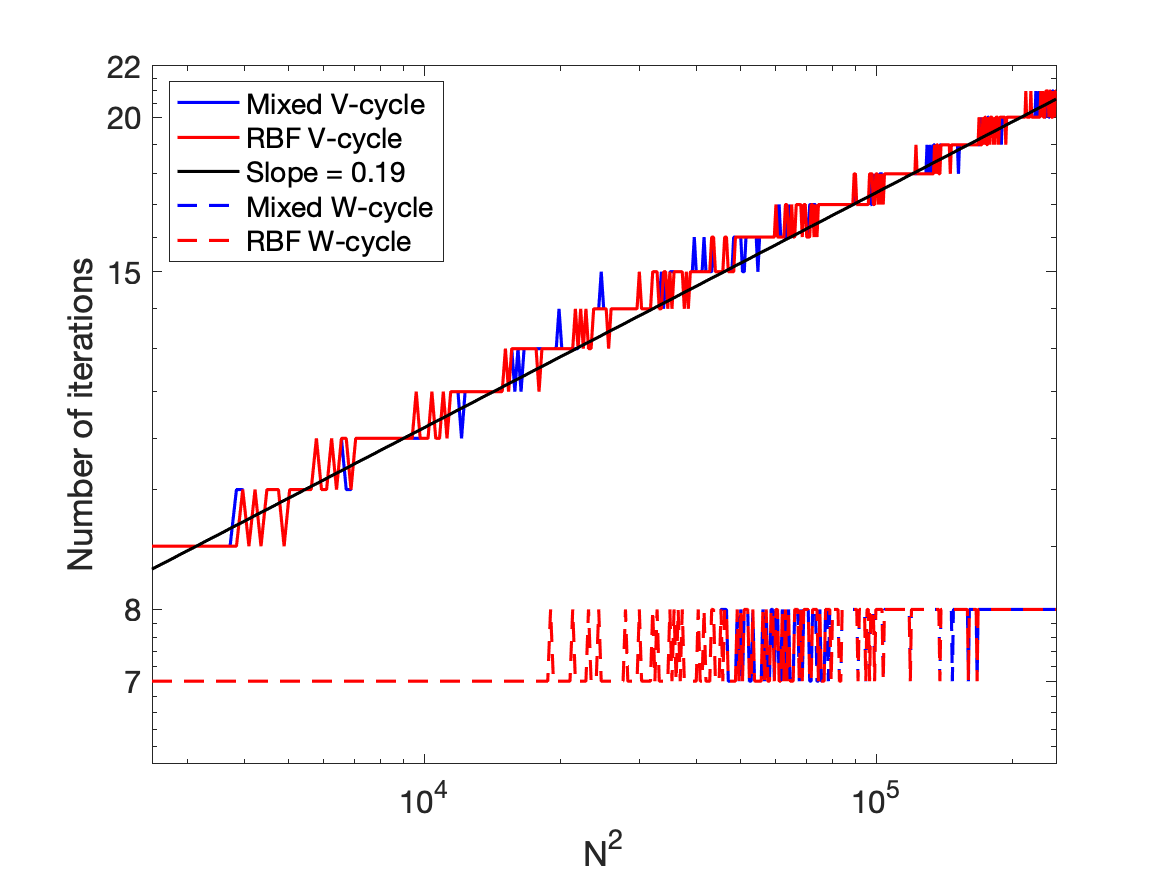}
\caption{Convergence check for the AMG preconditioned embedded boundary method on the glass-shapes domain. Left: Numerical errors correspond to different grid size $h$. Middle: Numerical errors of the gradient for different $h$. Right: The total number of iterations for convergence for different $N^2$. \label{fig:glass2}}
 \end{figure}

The numerical solution and error for $N=1280$ are presented in \fig{\ref{fig:glass1}}. In \fig{\ref{fig:glass2}} the errors for $N=50$ to $N=500$ are  also displayed.  For both of the mixed and the RBF-based method, the $l_2$ and $l_\infty$ error converges as $O(h^2)$. 
The error of the gradient $\nabla u$ converges as $O(h^{1.7})$ in $l_2$ and $O(h)$ in $l_\infty$. The total number of iterations for convergence for the $W$-cycle are independent of the size of the problem and are always  $7$ or $8$. The number of iterations for the $V$-cycle scale as $O(DOF^{0.19})$. 

 \begin{figure}[htb]
 \centering
\includegraphics[width=0.45\textwidth,trim={0.2cm 0.2cm 1.4cm 0.2cm},clip]{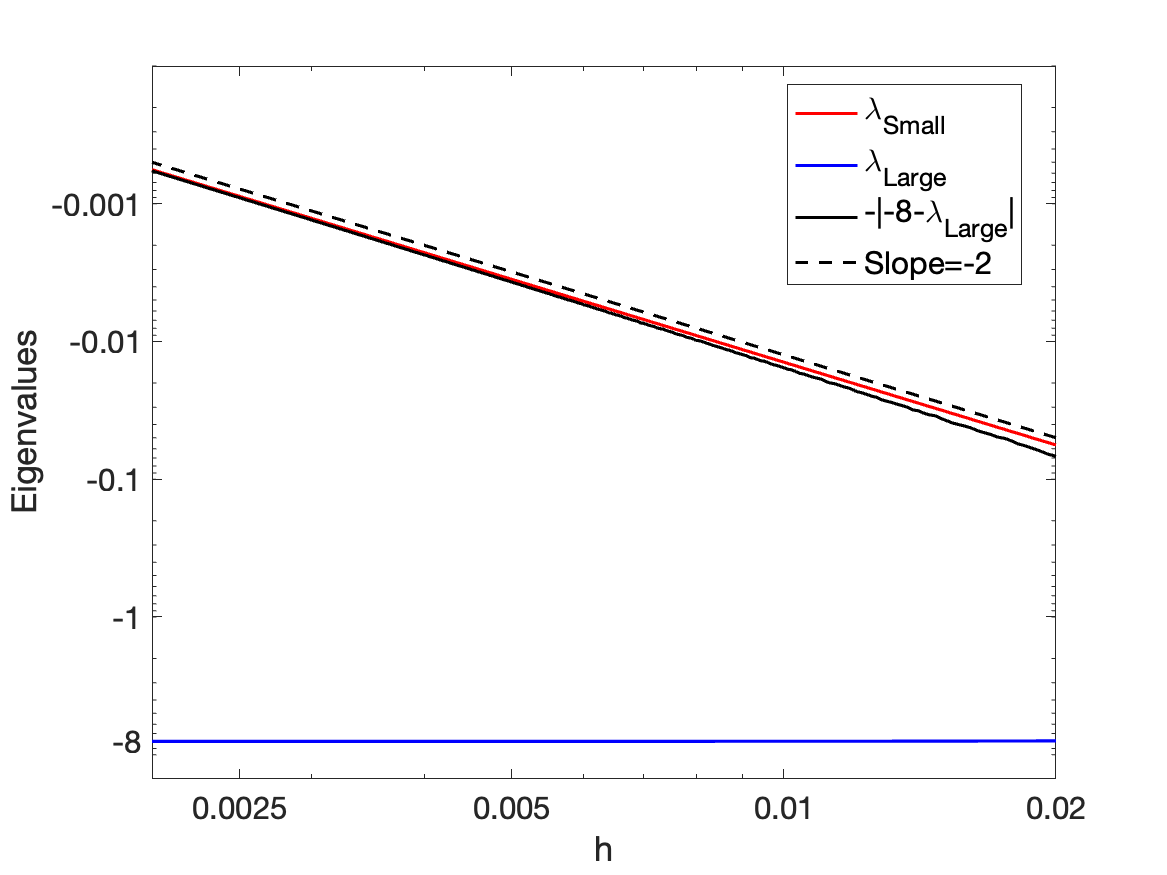}
\includegraphics[width=0.45\textwidth,trim={0.2cm 0.2cm 1.4cm 0.2cm},clip]{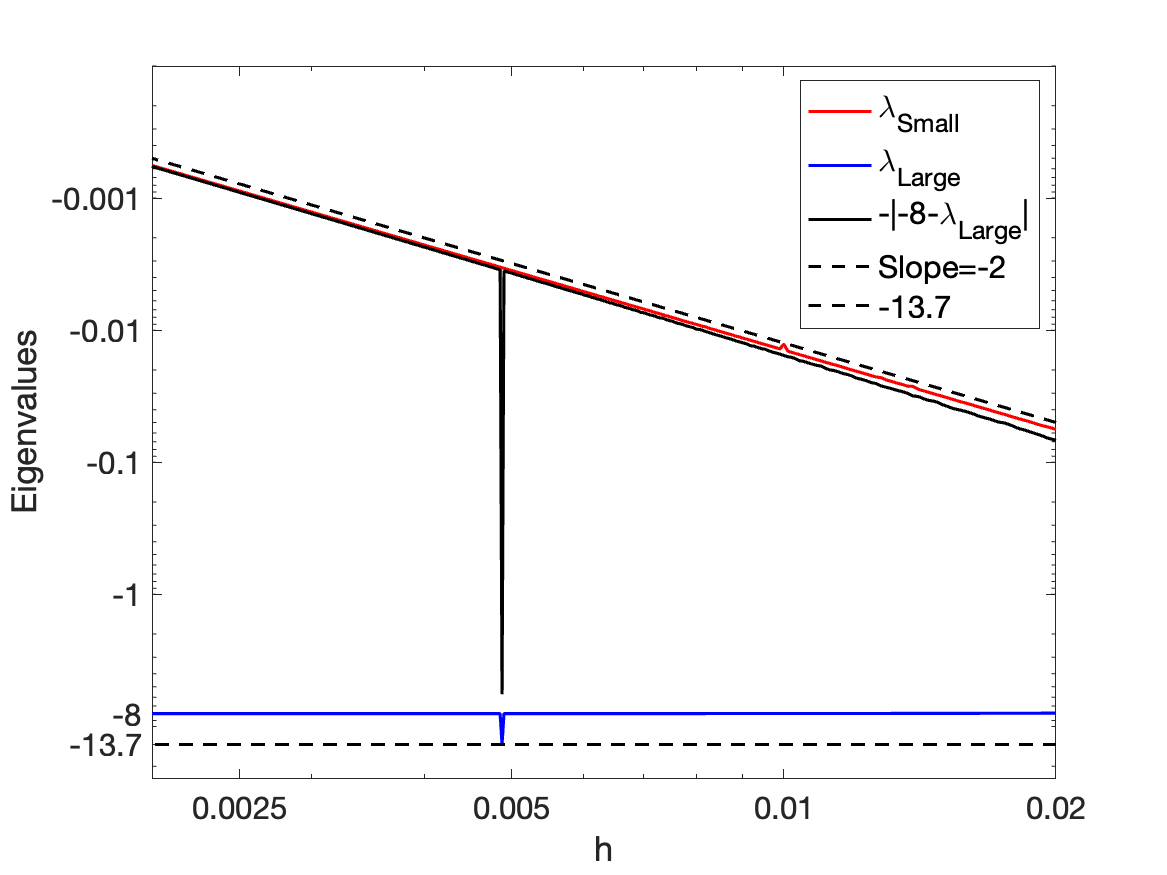}
\caption{Glass shaped geometry: the maximum and minimum eigenvalues of the discrete Laplace operator as a function of $h$. Left: the embedded boundary method using mixed interpolation. Right: the embedded boundary method using  RBF-based interpolation. \label{fig:glass_eig}}
 \end{figure}

We also compute the eigenvalues with the smallest and largest magnitude, $\lambda_{\textrm{Small}}$ and $\lambda_{\textrm{Large}}$, for $50\leq N\leq 500$, and plot them in \fig{\ref{fig:glass_eig}}. As can be seen they are all negative and as expected $\lambda_{\textrm{Small}}$ scales approximately as $O(h^{-2})$ while $\lambda_{\textrm{Large}}$ converges to $-8$ as the mesh is refined.

\subsection{Poisson's equation on a tilted square}
\begin{figure}[htb]
\centering
\includegraphics[width=0.45\textwidth,trim={0.2cm 0.2cm 1.75cm 0.2cm},clip]{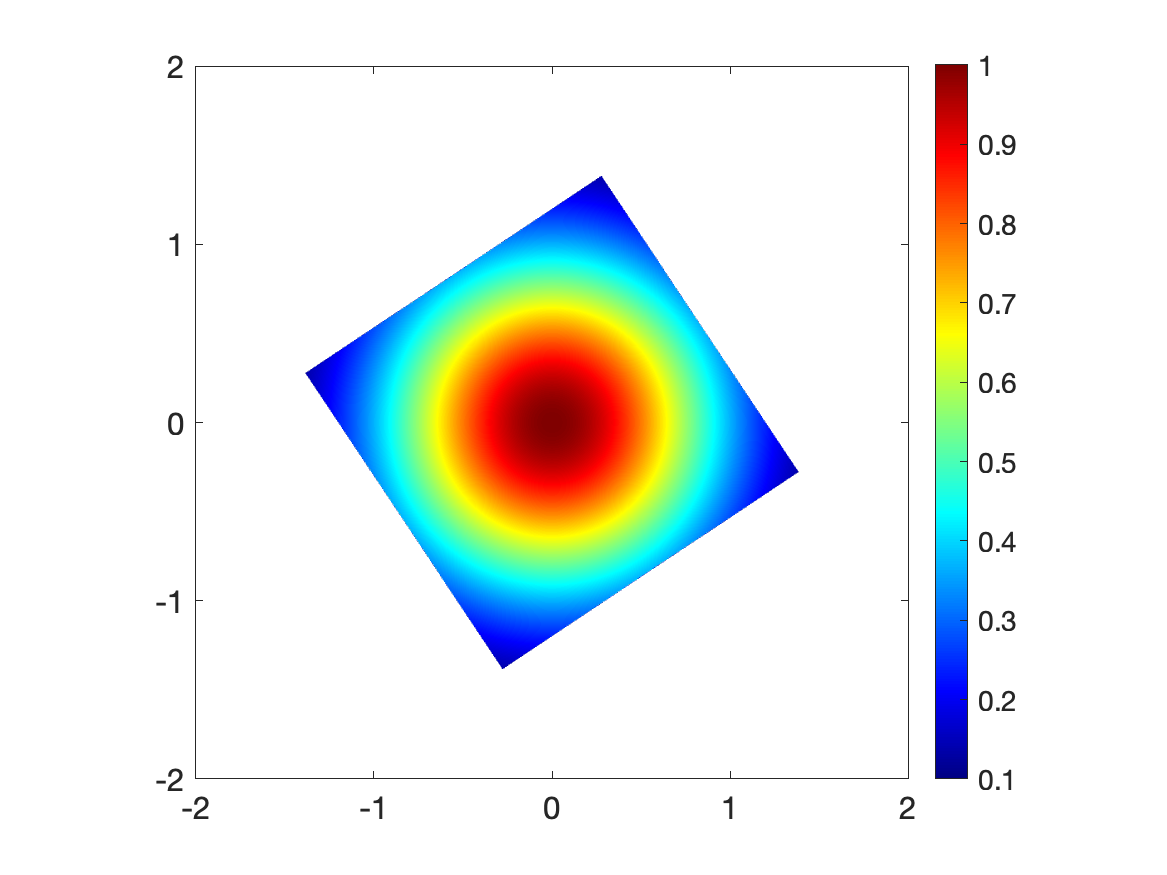}
\includegraphics[width=0.45\textwidth,trim={0.2cm 0.2cm 1.75cm 0.2cm},clip]{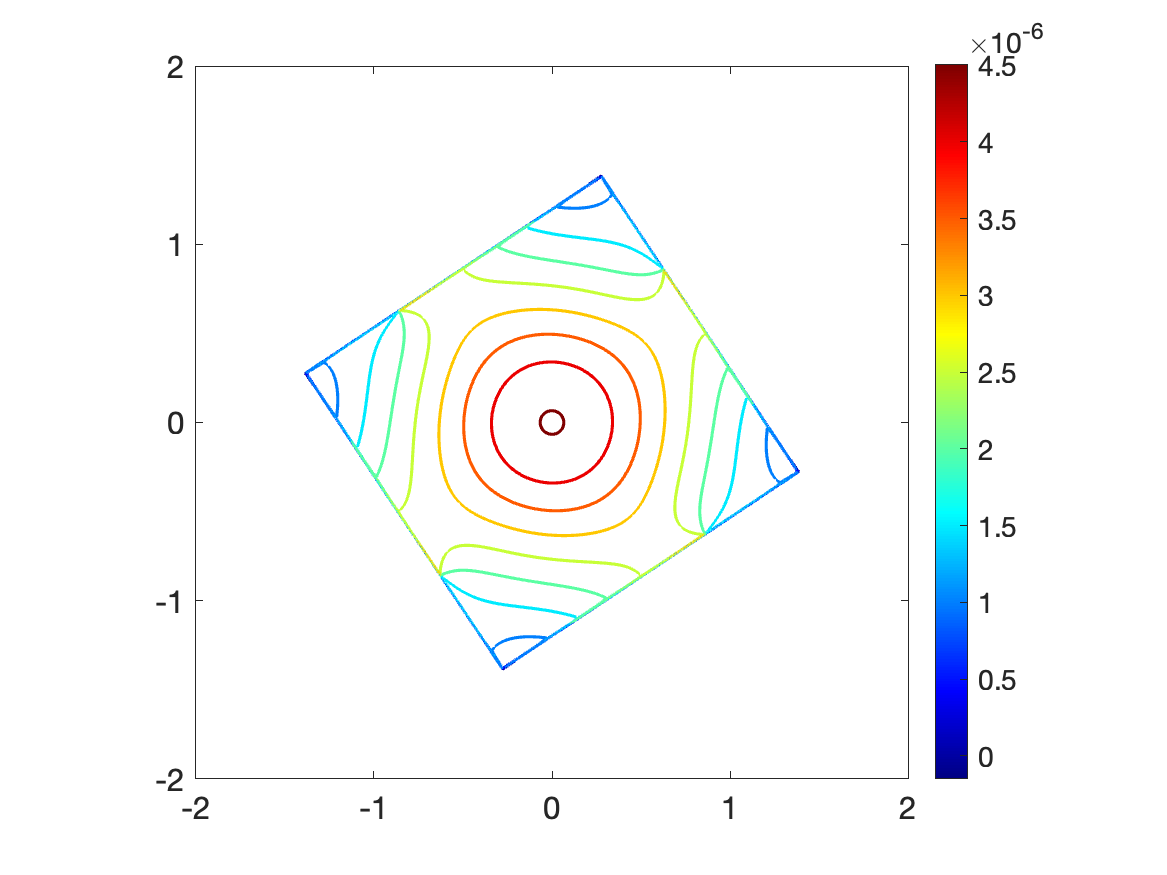}
\caption{Poisson's equation in two dimensions, $\nabla\cdot(\beta\nabla u) = f$, with Dirichlet boundary conditions on a tilted-square domain. Left: Numerical solution with $N=1280$. Right: The  contours of the error between the  numerical and the exact solution.  \label{fig:tilted_square_sol}}
\end{figure}
\begin{figure}[h!]
\centering
\includegraphics[width=0.32\textwidth,trim={0.2cm 0.2cm 1.9cm 0.2cm},clip]{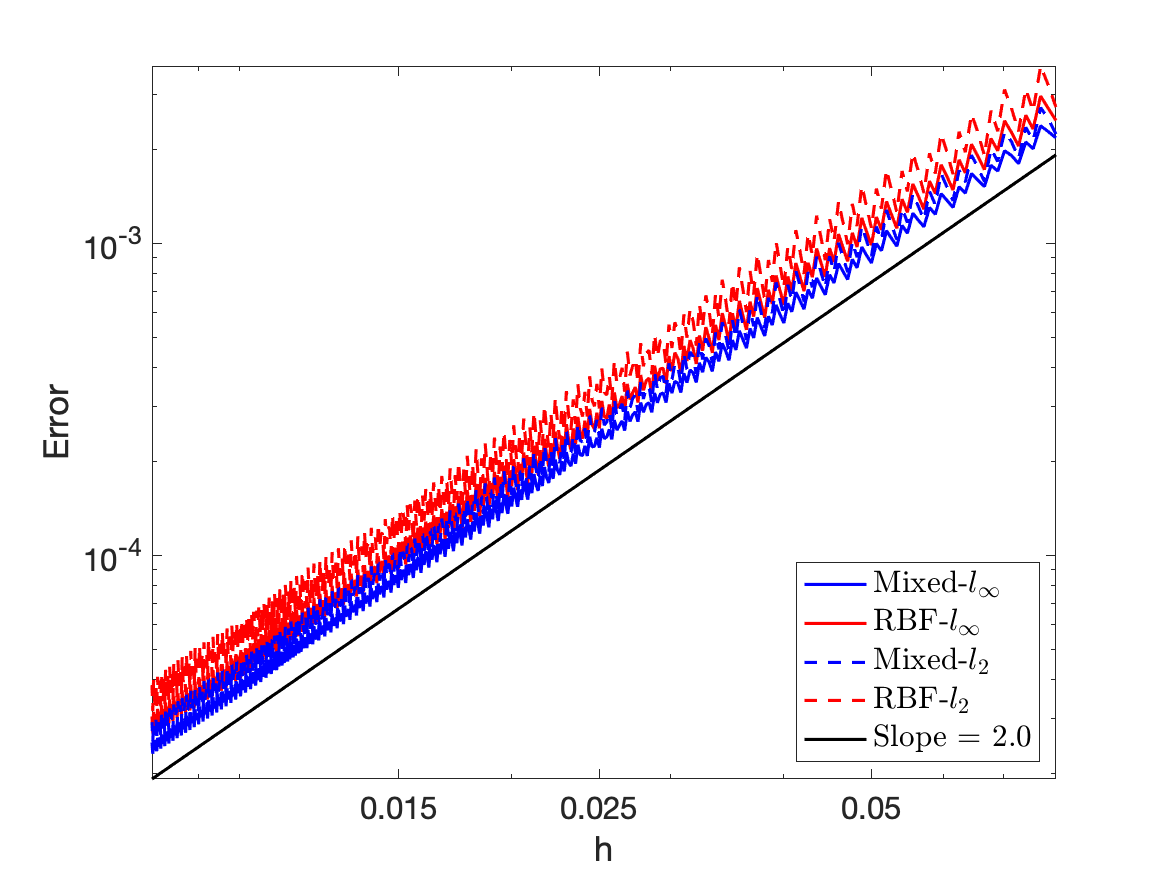}
\includegraphics[width=0.32\textwidth,trim={0.2cm 0.2cm 1.9cm 0.2cm},clip]{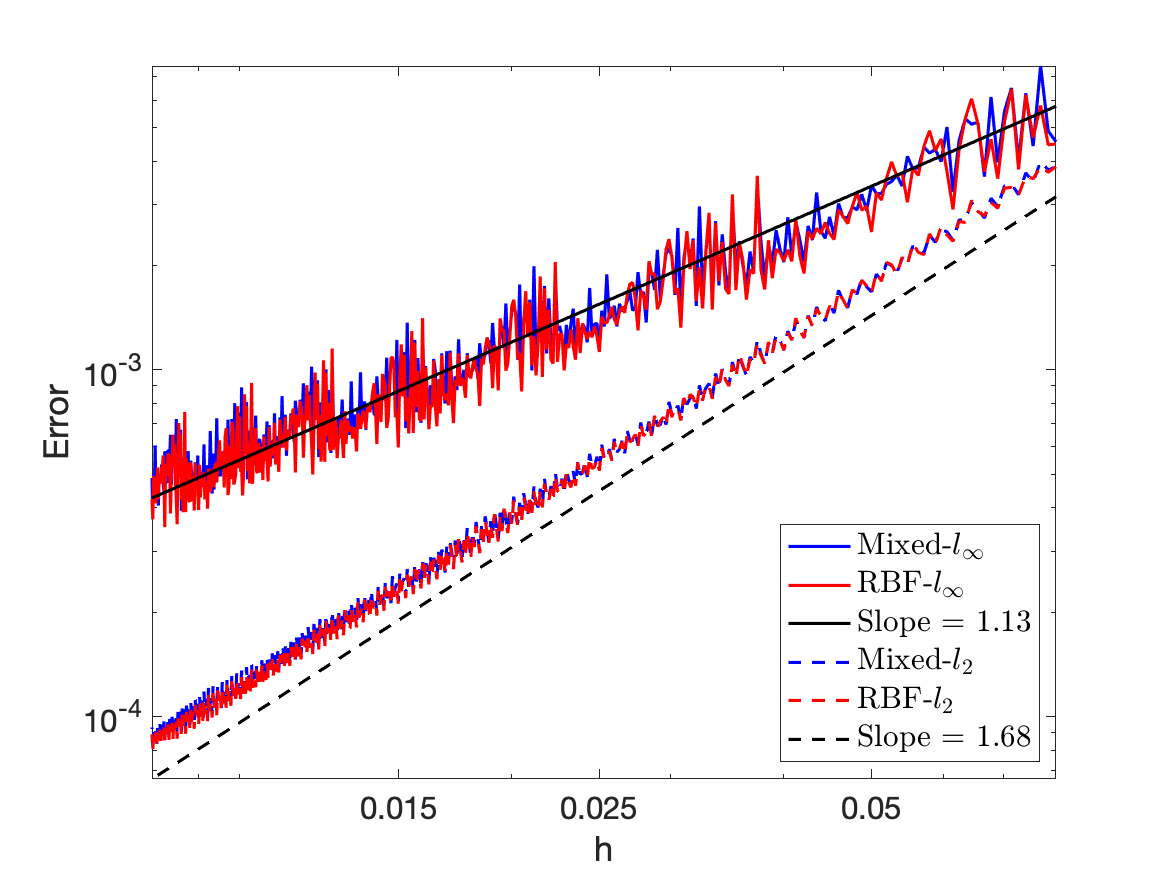}
\includegraphics[width=0.32\textwidth,trim={0.2cm 0.2cm 1.9cm 0.2cm},clip]{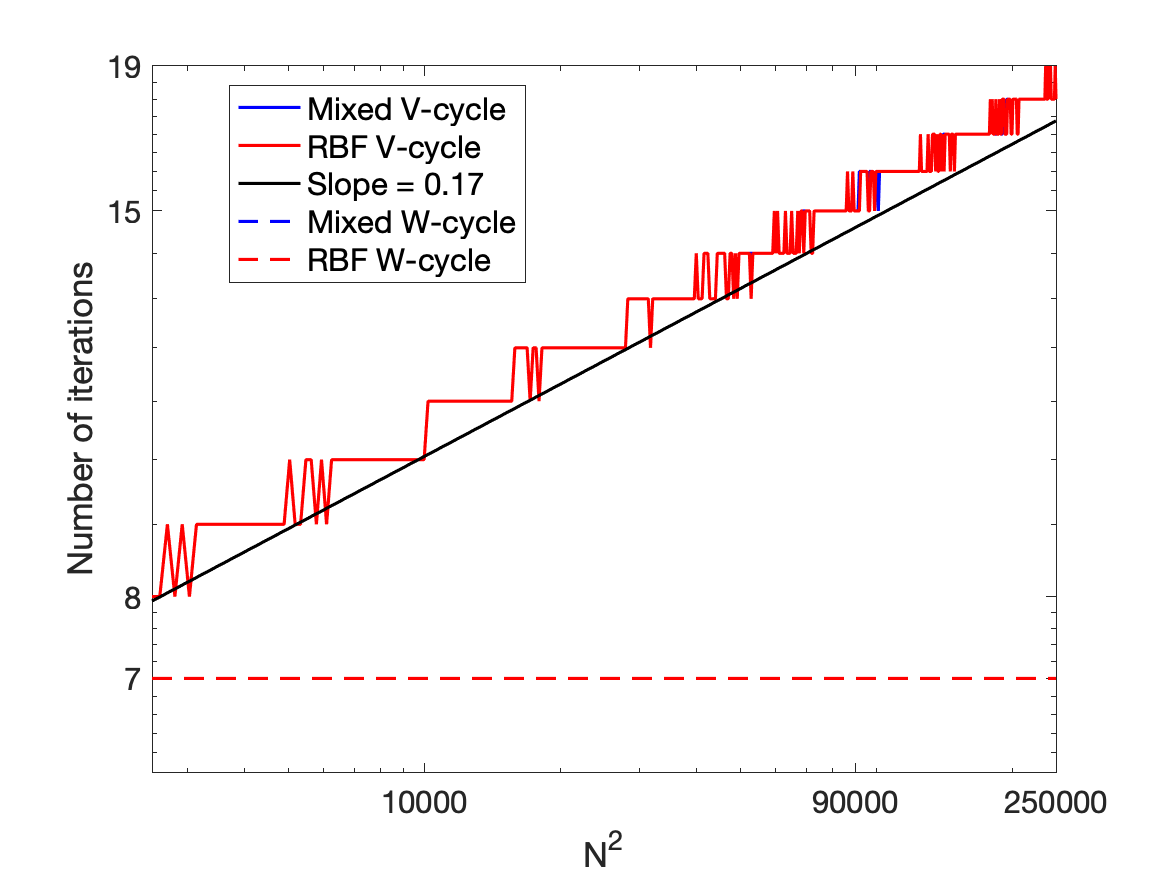}
\caption{Convergence check for the AMG preconditioned  embedded boundary method on a tilted-square domain. Left: Numerical errors as a function of $h$. Middle: Numerical errors of the gradient as a function of $h$. Right: The total number of iterations for convergence for different $N^2$.\label{fig:tilted_square}}
\end{figure}
We take the following example from \cite{ng2009guidelines}. The computational domain is $[-3,3] \times [-3,3]$, and $\Omega$ is a tilted square determined by the level set function 
$$\psi(x,y)=\max\Big(\max\big(\mid(\hat{x}-p_x)-(\hat{y}-p_y)\mid-1, \mid(\hat{x}-p_x)+(\hat{y}-p_y)\mid-1,  \mid(\hat{y}-p_y)-(\hat{x}-p_x)\mid-1\big)\Big),$$ 
where $\hat{x}(x,y)=x\cos(\theta\pi)-y \sin(\theta\pi)$ and $\hat{y}(x,y)=x\sin(\theta\pi)+y \cos(\theta\pi)$. We take $p_x=0.691$ and $p_y=0.357$ so that the center of the tilted square $(p_x,p_y)$ does not fall exactly on a grid point.   We take $\theta=0.313$ so that  the tilted square is not symmetric in the $x$ or $y$ direction. The boundary of the tilted square $\Omega$ has a kink. The exact solution for this example is $u(x,y)=e^{-x^2-y^2}$ and $\beta(x,y)=1.0$. A $(N+1)\times (N+1)$ uniform mesh is used. 

The numerical solution and error for $N=1280$ are presented in \fig{\ref{fig:tilted_square_sol}}. Sweeping from $N=50$ to $N=500$, numerical results are presented in \fig{\ref{fig:tilted_square}}. For both  the mixed and the RBF-based method, we observe second order accuracy for the solution in both $l_\infty$ and $l_2$. The error in the gradient $\nabla u$ scales as $O(h^{1.13})$ in $l_\infty$ and $O(h^{1.68})$ in $l_2$. As in the example above, the total number of iteration for convergence stays fixed at  $7$ when the $W$-cycle is used and scales as $O(DOF^{0.17})$ when the  $V$-cycle is used.

This geometry can result in stencils that are no longer diagonally dominant. Out of the $451$ considered values of $N$, $25$ of them result in a linear system that is not diagonal dominant. For all the system sizes the criteria to check the SPD property from Section \ref{sec:spd_alg} guarantees that the matrix is SPD.


\subsection{Geometry determined by parametric curves}
In our previous examples, $\Omega$ is determined by a level set function. Here we consider a case when it is determined by a parametric curve:
\begin{align}
\Gamma = \{(x,y) = \left(x(\theta),y(\theta)\right),\;\theta\in[\theta_L,\theta_R]\}.
\end{align}
We define the level set function $\psi(x,y)$ as the signed distance function $\psi_{\textrm{SD}}(x,y)$. Let  the closet point on the boundary interface $\Gamma$ to $\bx=(x,y)$ be $\bx_{\mathbf{n}}=(x_\mathbf{n},y_\mathbf{n})$. The amplitude of $\psi_{\textrm{SD}}(x,y)$ is $||\bx-\bx_\mathbf{n}||$. The sign of $\psi_{\textrm{SD}}(x,y)$ is determined by the cross-product of the normal vector $\mathbf{n}=\overrightarrow{\bx\bx_\mathbf{n}}$ and the tangent vector $\boldsymbol{\tau}$ of $\Gamma$ passing $\bx_\mathbf{n}$, and $\psi_{\textrm{SD}}(x,y)$ is negative for $\bx=(x,y)$ inside $\Omega$. 

To  find the closet point  to $\bx=(x,y)$ on the boundary $\Gamma$, a good initial guess is needed. We uniformly partition $[\theta_L,\theta_R]$ with $N_{\textrm{guess}}$ points, and use the point corresponding to 
$\theta_{\textrm{guess}}=\arg\min_{1\leq k\leq N_\textrm{guess}}\{||(x,y)-(x(\theta_k),y(\theta_k))||\}$ as the initial guess.

When the line-by-line interpolation is applied, we need to approximate the horizontal or vertical distance from the interior boundary points to the boundary interface $\Gamma$. One can compute this distance exactly, but  here we re-use the infrastructure for the levelset description of the geometry and instead use a  second order approximation of the desired distance. 
Take the case in \fig{\ref{fig:distancebd}} as an example, $\bx_{i,j}$ is a computational point, $\bx_{i-1,j}$ is a boundary point and $\bx_{i-2,j}$ is outside $\Omega$. $\bx_\Gamma$ lies on the boundary interface $\Gamma$.  Following  \cite{chen1997simple}, $||\bx_{i-1,j}-\bx_{\Gamma}||$ can be  approximated as:
\begin{align}
	||\bx_{i-1,j}-\bx_{\Gamma}|| = \frac{\psi_{\textrm{SD}}(x_{i-1},y_j)}{\psi_{\textrm{SD}}(x_{i-1},y_j)-\psi_{\textrm{SD}}(x_{i-2},y_j)}h+O(h^2).
\end{align}

\begin{figure}[htb]
	\begin{center}
		\setlength{\unitlength}{0.8cm} 
			\begin{picture}(7,7) 
			\multiput(0,1)(0,1){3}{\line(1,0){7}}
			\multiput(0,4)(0,1){3}{\line(1,0){7}}
			\multiput(1,0)(1,0){6}{\line(0,1){7}}
			\qbezier(5.5,6.5)(-2.5,1.5)(4.8,0.2)
			\put(0.85,3){\framebox{}}
			\put(2,3){\circle{0.25}}
			\put(3,3){\circle*{0.25}}

			\put(3.2,5.3){$\Gamma_l$}
			\put(1.8,3.3){$\bx_{i-1,j}$} 
			\put(2.75,2.5){$\bx_{i,j}$} 
			\put(0.6,2.5){$\bx_{i-2,j}$} 

			\put(2.45,1.5){$\bx_{\Gamma}$} 
			\put(2.75,1.75){\vector(-1,1){1.2}}
		\end{picture}
		\caption{ \label{fig:distancebd}Two grid points $\bx_{i-1,j}$ and $\bx_{i-2,j}$  which border $\bx_{\Gamma}$. An approximation of the distance between a {boundary point $\bx_{i-1,j}$ } and the intersection point $\bx_{\Gamma}$ of the interface with the line $y_j$ by the linear interpolation of the level set function.}
	\end{center}
\end{figure}
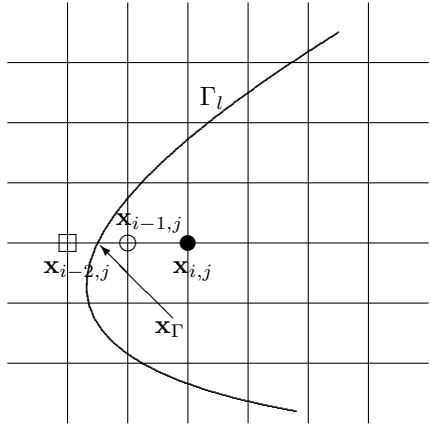

\subsubsection{Poisson's equation on a bone-shaped geometry\label{sec:bone_shape}}
\begin{figure}[htb]
\centering
\includegraphics[width=0.45\textwidth,trim={0.2cm 0.2cm 1.75cm 0.2cm},clip]{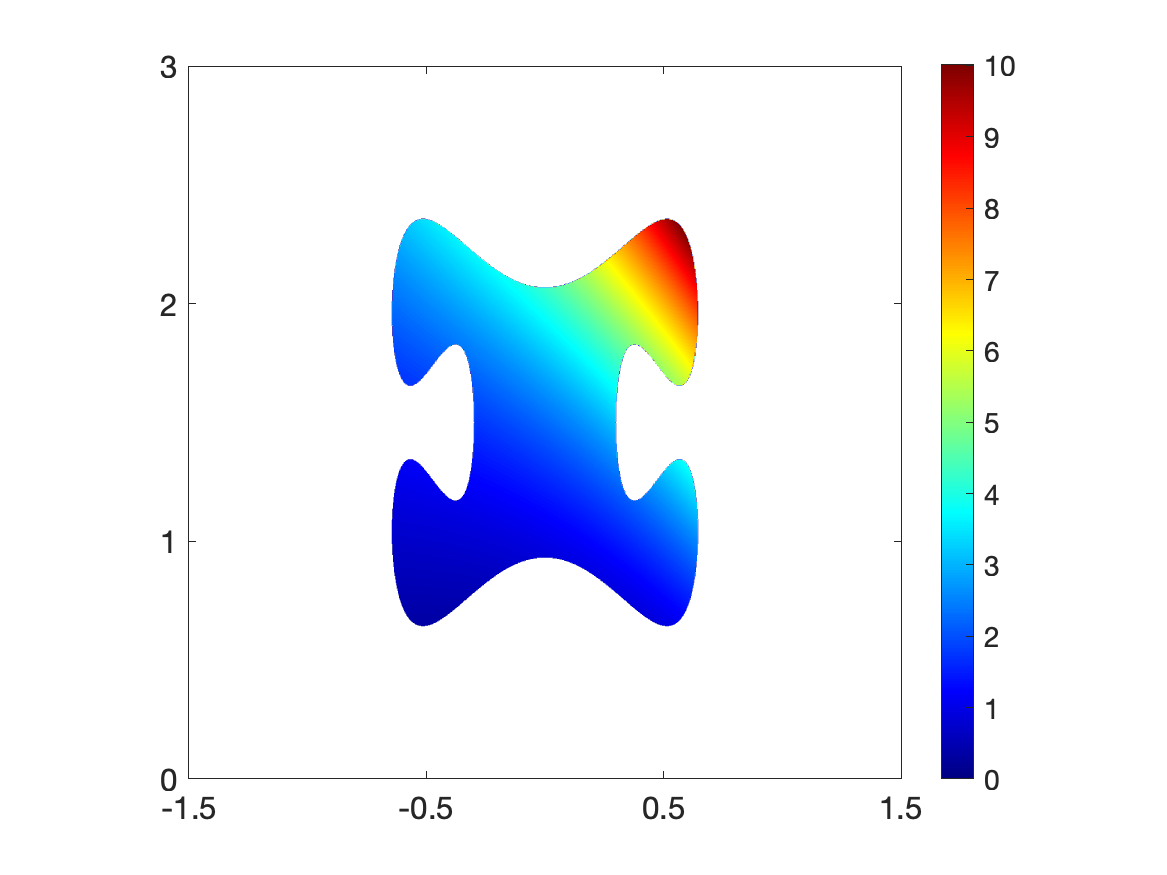}
\includegraphics[width=0.45\textwidth,trim={0.2cm 0.2cm 1.75cm 0.2cm},clip]{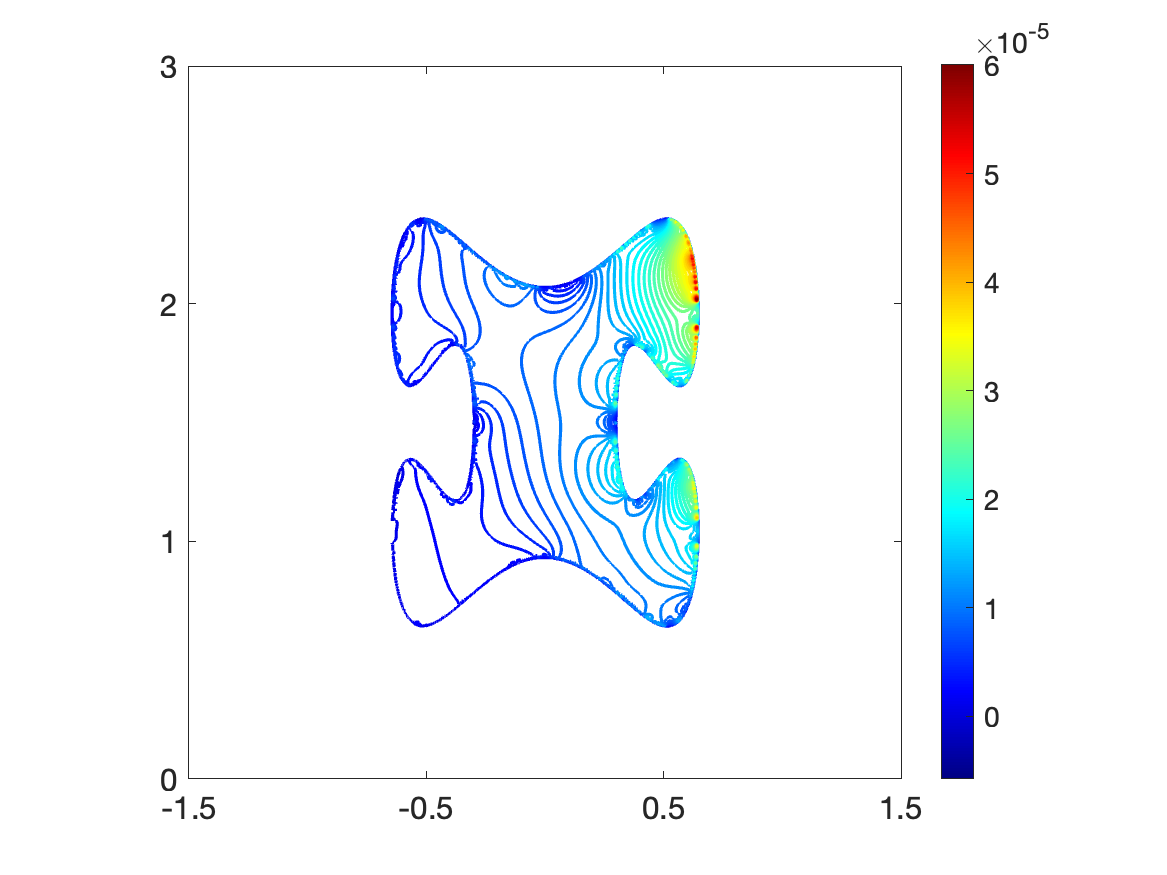}
\caption{Poisson's equation in two dimensions, $\nabla\cdot(\beta\nabla u) = f$, with Dirichlet boundary conditions on a bone-shape domain. Left: Numerical solution with $N=1280$. Right: The  contours of the error between the  numerical and the exact solution. \label{fig:bone_sol}} 
\end{figure}

We take the example from \cite{gibou2002second,li1998fast} and solve  Poisson's equation on a bone-shaped irregular domain in  the computational domain $[-1.5,1.5] \times [0,3]$. The exact solution is  $u=e^x(x^2\sin(y)+y^2)$ and $\beta=2+\sin(xy)$. The boundary $\Gamma$ is parameterized by 
$$\Big(x(\theta),y(\theta)\Big)=\Big(0.6\cos(\theta)-0.3\cos(3\theta),1.5+0.7\sin(\theta)-0.07\sin(3\theta)+0.2\sin(7\theta)\Big)$$ with $\theta \in [0,2\pi]$. 
An $(N+1)\times (N+1)$ uniform mesh partitioning $[-2,2]\times[-1,3]$ is used. 

\begin{figure}[htb]
\centering
\includegraphics[width=0.32\textwidth,trim={0.2cm 0.2cm 1.9cm 0.2cm},clip]{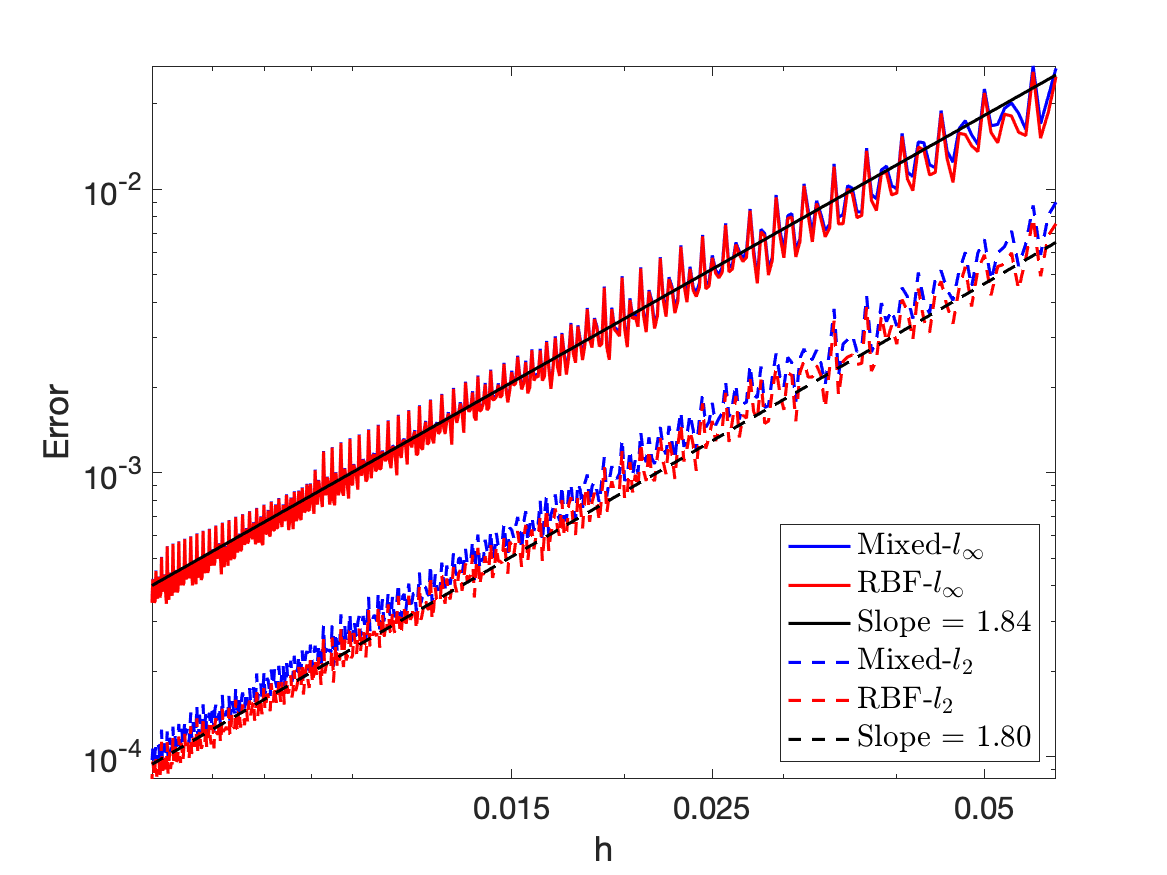}
\includegraphics[width=0.32\textwidth,trim={0.2cm 0.2cm 1.9cm 0.2cm},clip]{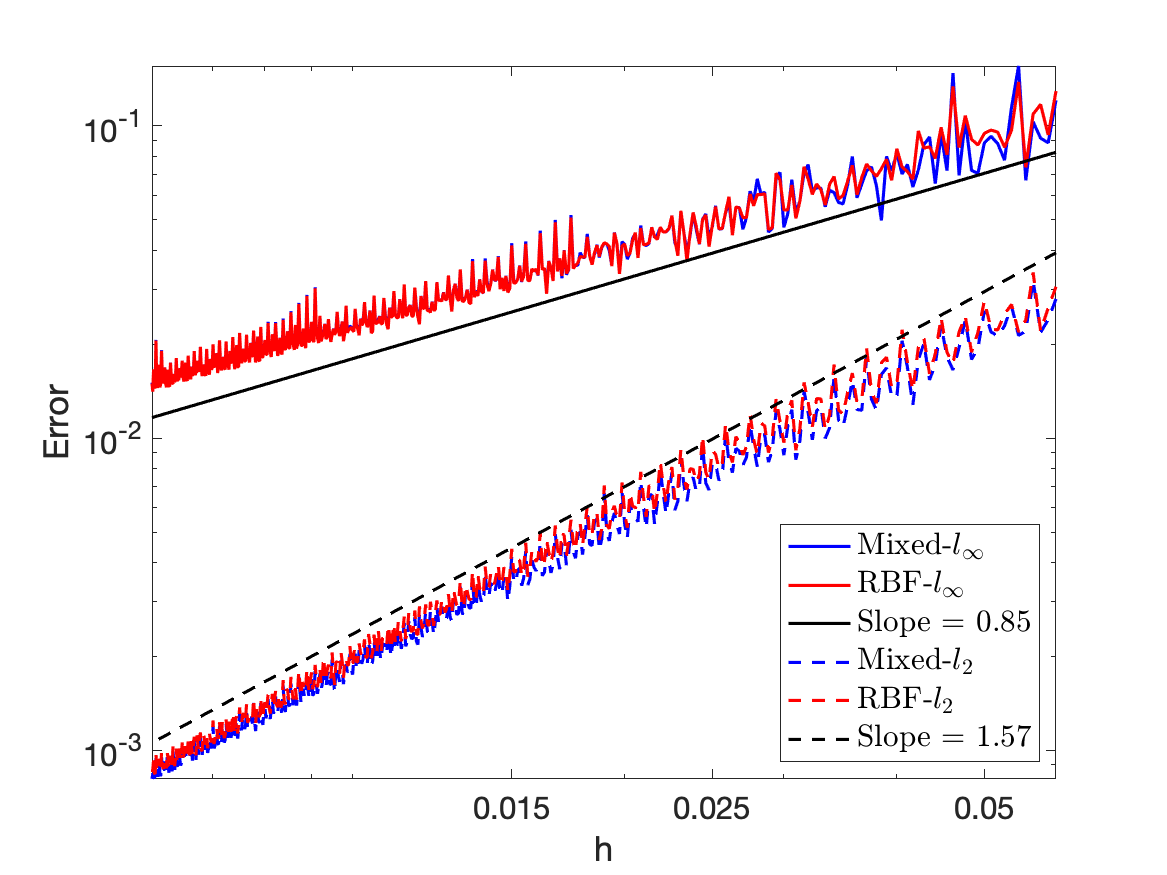}
\includegraphics[width=0.32\textwidth,trim={0.2cm 0.2cm 1.9cm 0.2cm},clip]{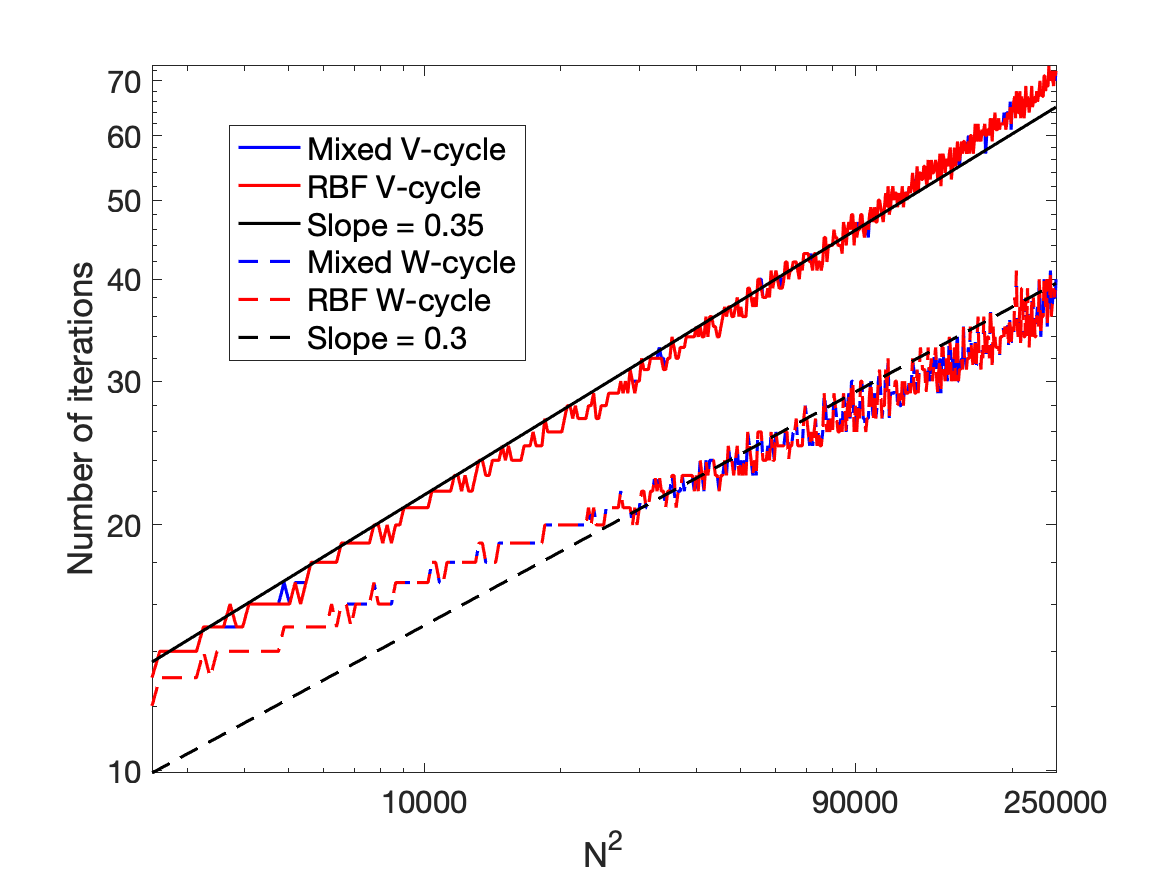}
\caption{Convergence check for the AMG preconditioned embedded boundary method for a bone-shaped domain. Left: Numerical errors as a function of $h$. Middle: Numerical errors of the gradient as a function of $h$. Right: The total number of iterations for convergence for different $N^2$.\label{fig:bone}}
\end{figure}

The numerical solution for $N=1280$ and its difference from the exact solution are presented in \fig{\ref{fig:bone_sol}}. Sweeping from $N=50$ to $N=500$, numerical results are presented in \fig{\ref{fig:bone}}. For both  the mixed and the RBF-based EB methods, we observe $O(h^{1.84})$ error in $l_\infty$ and $O(h^{1.80})$ error in $l_2$ for $u$. The $l_\infty$ error of the gradient $\nabla u$ scale as $O(h^{0.85})$ and the $l_2$ error scale as $O(h^{1.57})$. The total number of iterations needed by the $W$-cycle is smaller than the $V$-cycle. The former scales as $O(DOF^{0.3})$ and  the later scales as $O(DOF^{0.35})$. Except for $4$ out of the $451$ considered discretization sizes, the resulting linear system is diagonally dominant, and the proposed criteria to check the SPD structure is never violated. 
The eigenvalues with the smallest and largest magnitude $\lambda_{\textrm{Small}}$ and $\lambda_{\textrm{Large}}$ are plotted in  \fig{\ref{fig:bone_eig}}. They are all negative, and $\lambda_{\textrm{Small}}$ scales approximately as $O(h^{-2})$. Moreover, $|\lambda_{\textrm{Large}}|$ is smaller than $24$, which is the eigenvalue with largest magnitude of the problem with periodic boundary condition and conductivity coefficient $\widetilde{\beta}=\max(|\beta(x,y|)=3$. 

\begin{figure}[htb]
\centering
\includegraphics[width=0.45\textwidth,trim={0.2cm 0.2cm 1.0cm 0.2cm},clip]{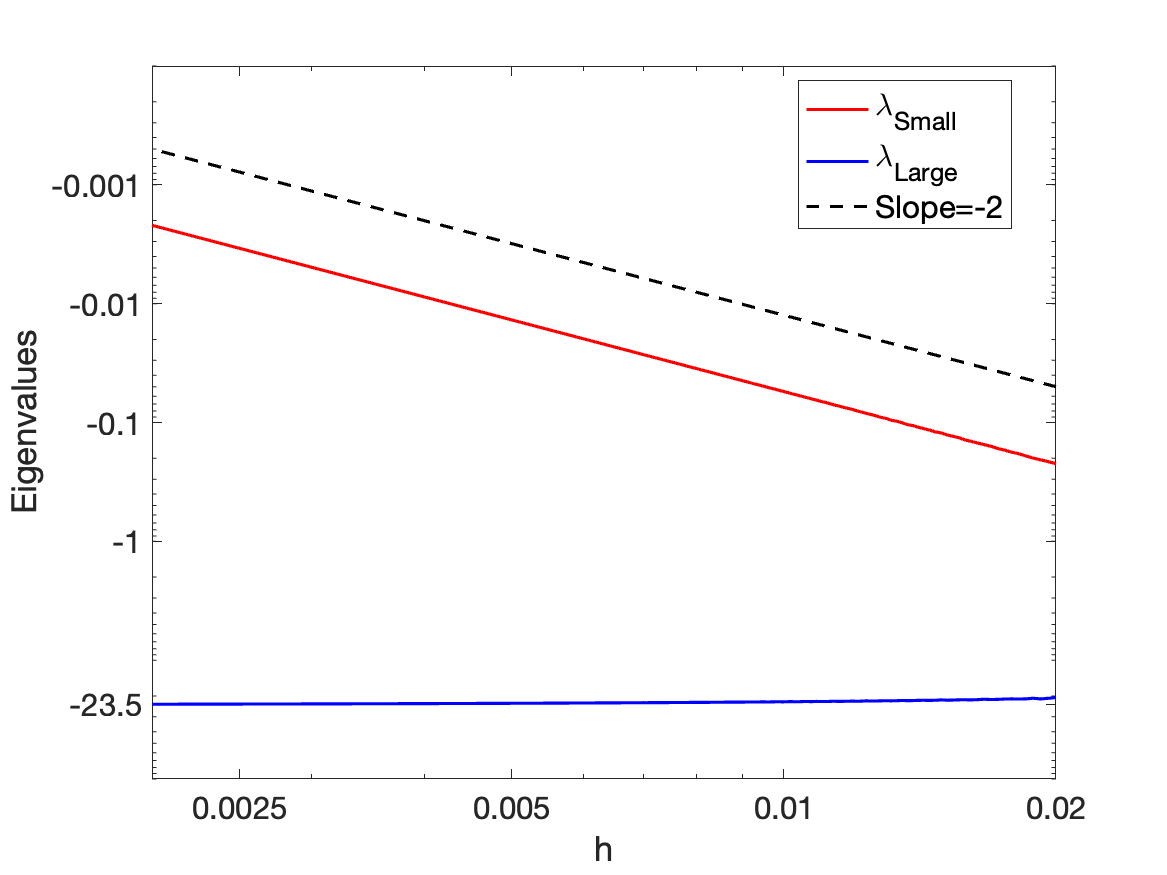}
\includegraphics[width=0.45\textwidth,trim={0.2cm 0.2cm 1.0cm 0.2cm},clip]{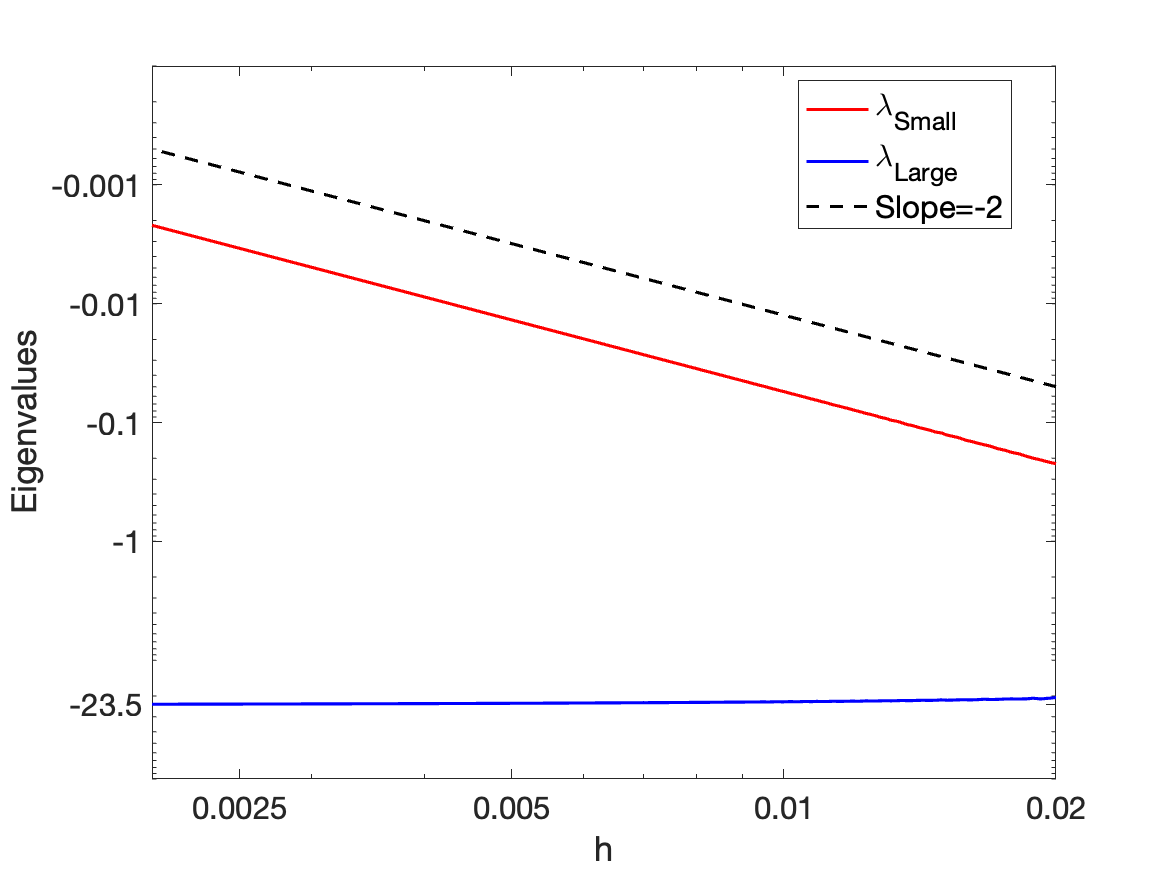}
\caption{Bone-shaped geometry: the maximum and minimum eigenvalues of the discrete Laplace operator as a function of $h$. Left: the embedded boundary method using mixed interpolation. Right: the embedded boundary method using RBF-based interpolation.\label{fig:bone_eig}}
\end{figure}

\subsubsection{The Helmholtz equation in a star-shaped geometry}\label{sec:starshape}
We consider the star-shaped geometry described in \cite{gibou2002second}. We partition the computational domain $[-1,1] \times [-1,1]$ by a $(N+1)\times(N+1)$ uniform mesh. The  boundary interface is parameterized by 
$$\Big(x(\theta),y(\theta)\Big)=\Big(0.02\sqrt5+(0.5+0.2\sin(5\theta))\cos(\theta),0.02\sqrt5+(0.5+0.2\sin(5\theta))\sin(\theta)\Big)$$ with $\theta \in [0,2\pi]$, and the center of this star shape is $(0.02\sqrt{5},0.02\sqrt{5})$. We solve the Helmholtz equation 
\begin{align}
\omega^2 u + \Delta u = 1000\delta(x_s)\delta(y_s),\quad \omega = 50,
\end{align}
with zero Dirichlet boundary conditions. Here, $(x_s,y_s)=(-0.375,0.125)$ and $\delta$ is the Dirac delta function which is approximated by a two dimensional hat function whose integral is one. The Helmholtz operator $\omega^2I+\Delta$ is indefinite,  and we use the MINRES method as our linear solver. The numerical solution for $N=800$ is presented in \fig{\ref{fig:star_eig}}.

\begin{figure}[htb]
\centering
\includegraphics[width=0.7\textwidth,trim={0.2cm 0.2cm 1.7cm 0.2cm},clip]{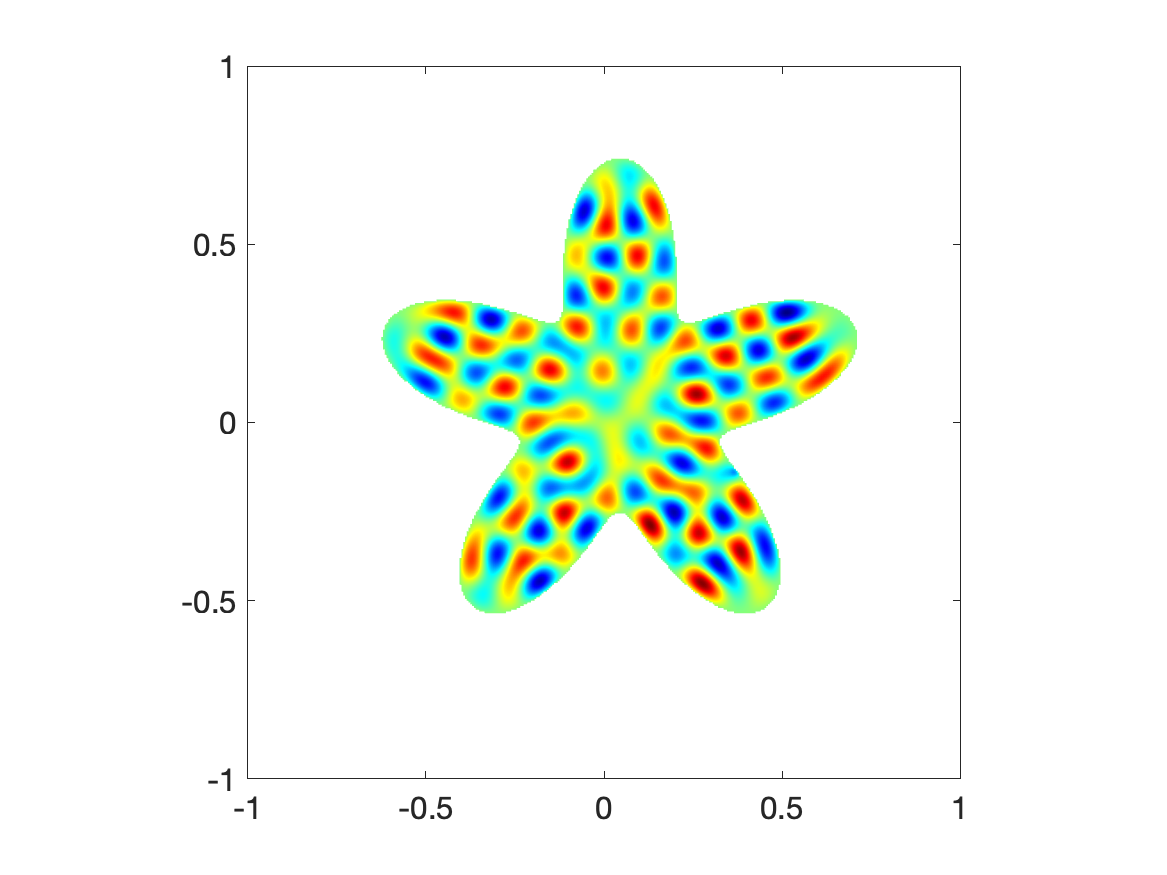}
\caption{The Helmholtz equation in two dimensions, $\omega^2 u + \Delta u =f$, with Dirichlet boundary conditions in a star-shaped domain. Plotted is the numerical solution with $N=800$.\label{fig:star_eig}}
\end{figure}


\subsection{Heat equation}
In this example, we consider the heat equation with Dirichlet boundary conditions
\begin{subequations}
\begin{align}
&u_t = \nabla\cdot(\beta \nabla u)+f,\quad (x,y)\in\Omega,\\
 &u(x,y,t) = u_{\mathcal{D}}(x,y),\quad (x,y)\in \Gamma \quad\text{and}\quad u(x,y,0) = u_{\mathcal{I}}(x,y).
\end{align}
\end{subequations}
We consider $\beta(x,y) = 0.25-x^2-y^2$, and impose a homogenous Dirichlet boundary condition. The geometry $\Omega$ is a disk determined by the level set function $\psi(x,y)=0.25-x^2-y^2$. The source $f(x,y,t)$ and the initial condition are chosen such that $u(x,y,t)=e^{-t}(x^2+y^2-0.25)$. The computational domain is $[-1,1]\times[-1,1]$ partitioned by a $(N+1)\times(N+1)$ uniform mesh.
 
We use our method as the spatial discretization and the Crank-Nicolson method as the time integrator with a time step size $\Delta t=h$. In this example only the $V$-cycle is used in the AMG. When inverting the linear system, we use the data from last step as initial guess. We solve the problem for time $t\in[0,0.5]$. The numerical solution with $N=1000$ at $t=0.5$ is presented in \fig{\ref{fig:heat}}. The $l_2$ and $l_\infty$ errors in $u$ at $t=0.5$ are converging as $O(h^2)$, the error of the gradient $\nabla u$ at $t=0.5$ converges as $O(h)$ in $l_\infty$ and  as $O(h^{1.45})$ in $l_2$. On average, we need $2$ iterations for convergence per time step.
\begin{figure}[htb]
\centering
\includegraphics[width=0.32\textwidth,trim={0.2cm 0.2cm 1.7cm 0.2cm},clip]{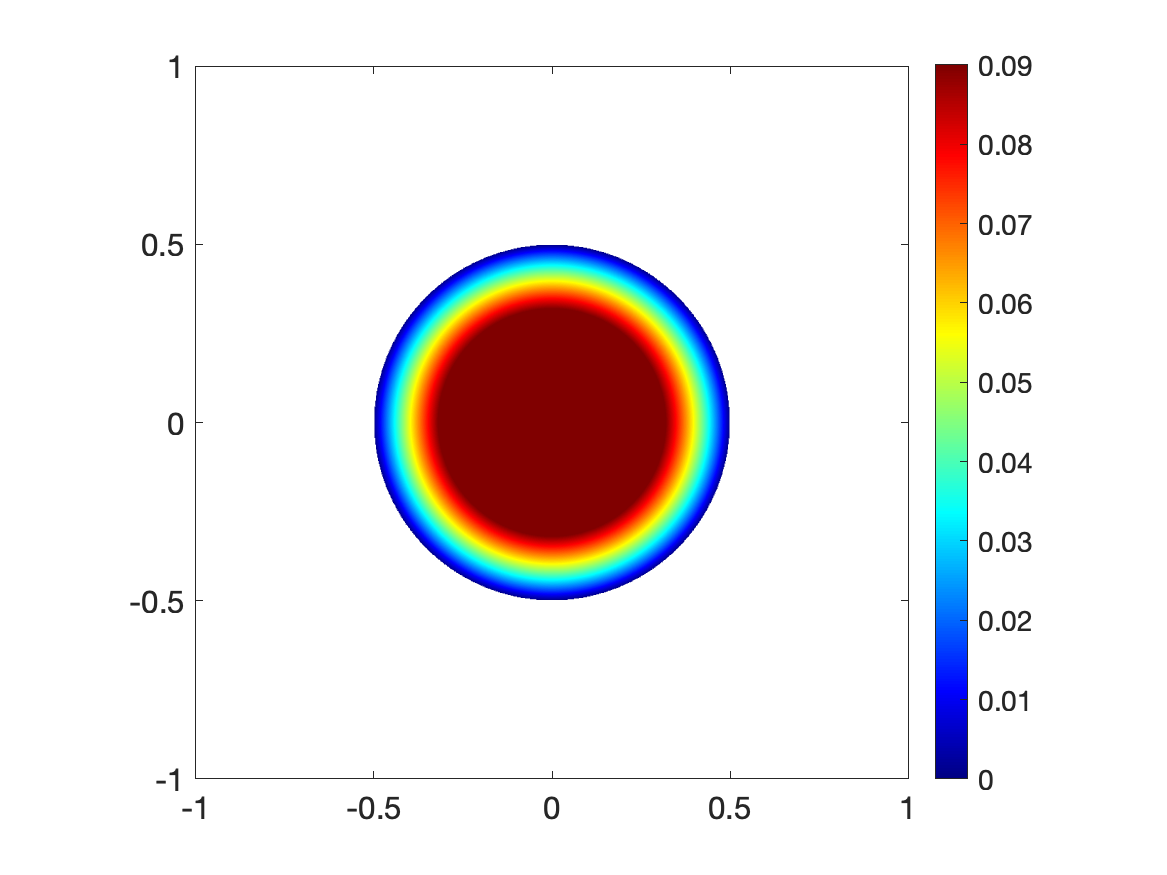}
\includegraphics[width=0.32\textwidth,trim={0.2cm 0.2cm 1.45cm 0.2cm},clip]{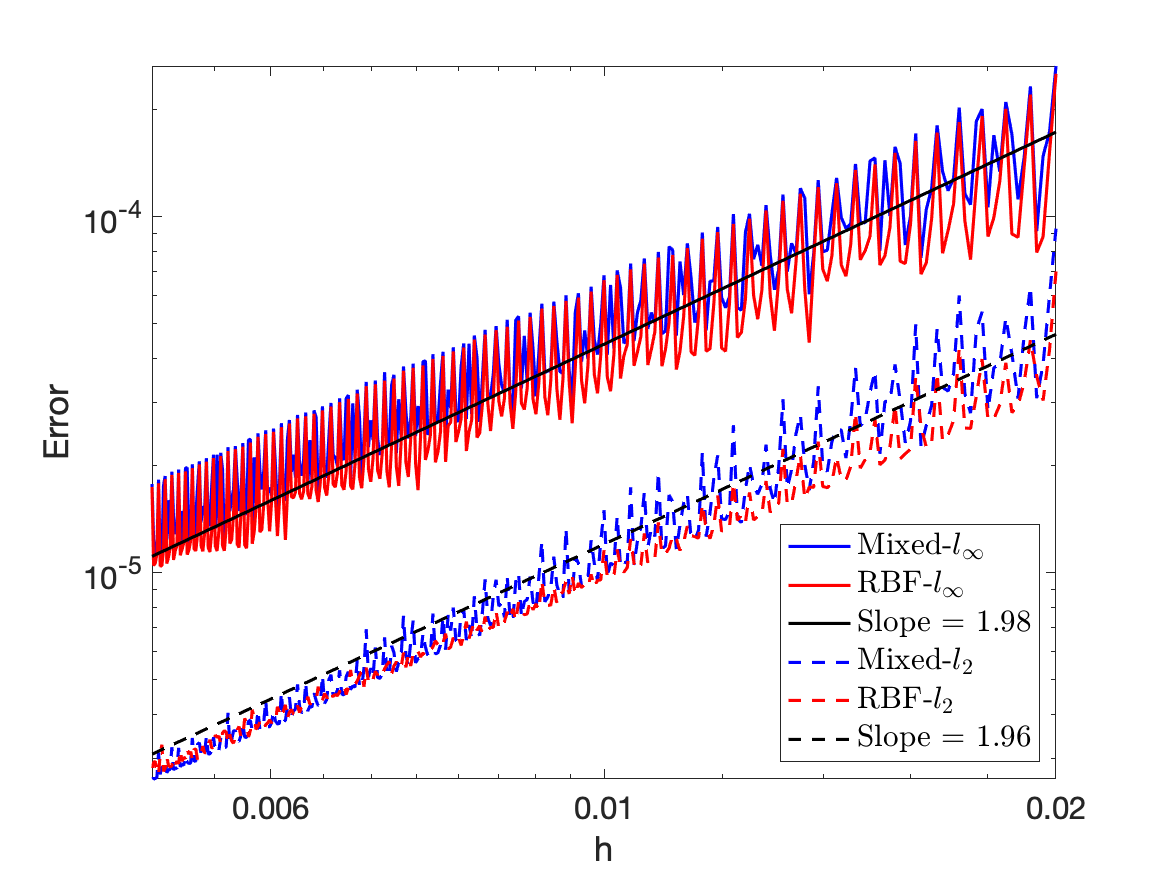}
\includegraphics[width=0.32\textwidth,trim={0.2cm 0.2cm 1.45cm 0.2cm},clip]{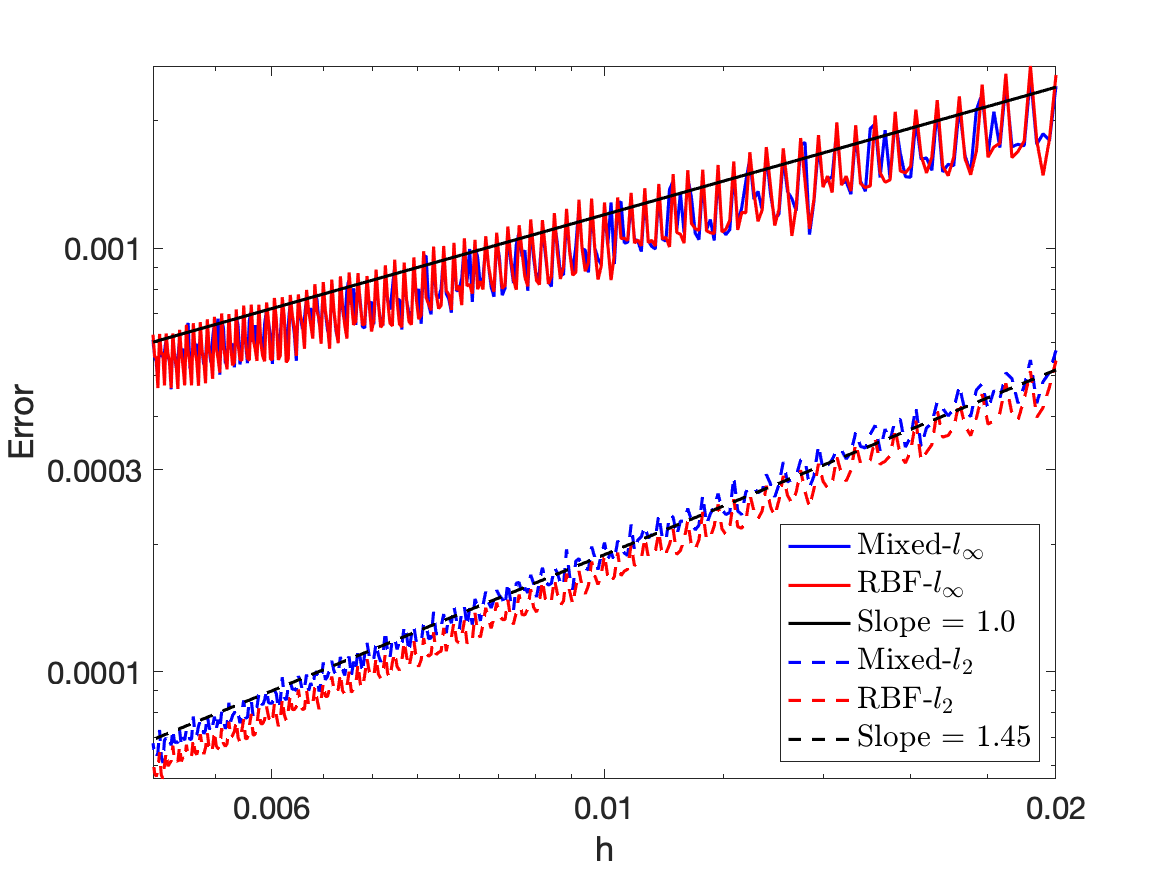}
\caption{The heat equation in two dimensions, $u_t = \nabla\cdot(\beta \nabla u)+f$, with Dirichlet boundary conditions on a disk-shaped domain. Left: Numerical solution using $N=1000$. Middle: Numerical error as a function of $h$ at the final time $T=0.5$. Right: Numerical errors of the gradient as a function of $h$ at the final time $T=0.5$. \label{fig:heat}}
\end{figure}


\subsection{The wave equation}
In this example we solve the wave equation with homogenous Dirichlet boundary conditions 
\begin{eqnarray*}
&\frac{\pa^2 u(x,y,t)}{\pa t^2} = \nabla^2 u(x,y,t), &  (x,y) \in \Omega \equiv \{ x^2+y^2<1 \}, \ \ t>0, \\ 
&u(x,y,t) = 0, &  (x,y) \in \Gamma \equiv \{ x^2+y^2=1 \}.  
\end{eqnarray*}
The initial data is chosen so that the solution is a standing mode
\begin{equation}\label{eq:Bessel}
u_{mn}(r,\theta,t) = J_m(r \kappa_{mn}) \cos m \theta \, \cos \kappa_{mn} t.
\end{equation}
Here $J_m(z)$ is the Bessel function of the first kind of order $m, (m=0,1,\ldots)$ and $\kappa_{mn}$ is the $n$th zero of $J_m$. In this problem we set $m=n=7$. Then $\kappa_{77}=31.4227941922$ and the period of the solution is $\f{2\pi}{\kappa_{77}} = 0.1999562886971$.

To discretize in time we use the so-called $\theta$-scheme (see e.g. \cite{britt2018high})
\begin{equation}
u^{n+1} - \theta \Delta t^2 \nabla^2 u^{n+1} = 2u^n + (1-2\theta) \Delta t^2 u^n - u^{n-1} + \theta \Delta t^2 \nabla^2 u^{n-1}. 
\end{equation}
Here $\theta \ge 0$ is a parameter that can be chosen to obtain different schemes. The values we consider here are $\theta=0$, which corresponds to the classic explicit leap-frog scheme, $\theta = 1/2, 1/4$ which corresponds to second order unconditionally stable implicit methods, and $\theta = 1/12$ which leads to an implicit method that is fourth order accurate but with a stability constraint on the time step. Comparing the explicit method and the fourth order accurate method, the latter can march with a time step that is roughly 50\% larger than the second order explicit method.
We discretize the domain $(x,y) \in [-1.1,1.1]^2$ using 100, 200, 400 and 800 grid points and evolve the numerical solution for 10.2 periods. The mixed EB method is used in the spatial discretization. We use $\Delta t /h = (0.7, 2 ,2 ,0.85)$ for the schemes corresponding to $\theta = (0,1/2,1/4,1/12)$. The initial data are set using the exact solution. In Table \ref{tab:wave} we report max-errors, along with computed rates of convergence (using two subsequent refinement levels) and average number of CG-AMG iterations per time step, at the final time. It is clear that for this example there is no benefit to use an unconditionally stable time stepping method as the temporal errors are dominating the total error if $\Delta t > h$. On the other hand, the fourth order method achieves an error that is roughly equivalent with the explicit scheme and may be a suitable alternative for problems with solutions that vary rapidly in time. 

In Figure \ref{fig:wavesol} and Figure \ref{fig:waveerr} we display the numerical solution and the error for the four different methods.

\begin{table}[htb]
\begin{center}
\begin{tabular}{|c|c|c|c|c|c|c|c|c|c|c|c|}
\hline
$h$  
&  $\theta=0$ 
& $p$ 
& $\theta=\frac{1}{2}$ & $p$ & iter 
& $\theta=\frac{1}{4}$ & $p$ & iter   
& $\theta=\frac{1}{12}$ & $p$ & iter \\
\hline
\hline
 2.2(-2) & 3.6(-2) & $\ast$ & 2.0(-1) & $\ast$ & 6.0 &5.7(-1) & $\ast$ & 6.0 &5.9(-2) & $\ast$ & 3.0 \\ 
1.1(-2) & 9.2(-3) & 2.0 & 4.8(-1) & -1.3 & 6.0 & 7.6(-3) & 6.2 & 5.2 & 2.1(-2) & 1.5 & 3.0 \\
5.5(-3) & 2.5(-3) & 1.9 & 6.0(-2) & 3.0 & 6.0 & 5.1(-2) & -2.7 & 5.0 & 6.1(-3) & 1.8 & 3.0\\
2.75(-3) & 6.7(-4) & 1.9 & 3.5(-2) & 0.8 & 6.0 & 1.6(-2) & 1.7 & 5.0 & 1.6(-3) & 1.9 & 3.0\\
1.375(-3) & 1.7(-4) & 2.0 & 9.9(-3) & 1.8 & 5.0 & 4.3(-3) & 1.9 & 5.0 & 4.1(-4) & 2.0 & 3.0\\
\hline
\end{tabular}
\end{center}
\caption{The table displays max-errors, along with computed rates of convergence (using two subsequent refinement levels) and average number of CG-AMG iterations per time step, at the final time $T$. The results are for the  wave equation example \label{tab:wave}}
\end{table}%

\begin{figure}[htb]
\begin{center}
\includegraphics[width=0.49\textwidth,trim={0.5cm 0.3cm 0.4cm 0.6cm},clip]{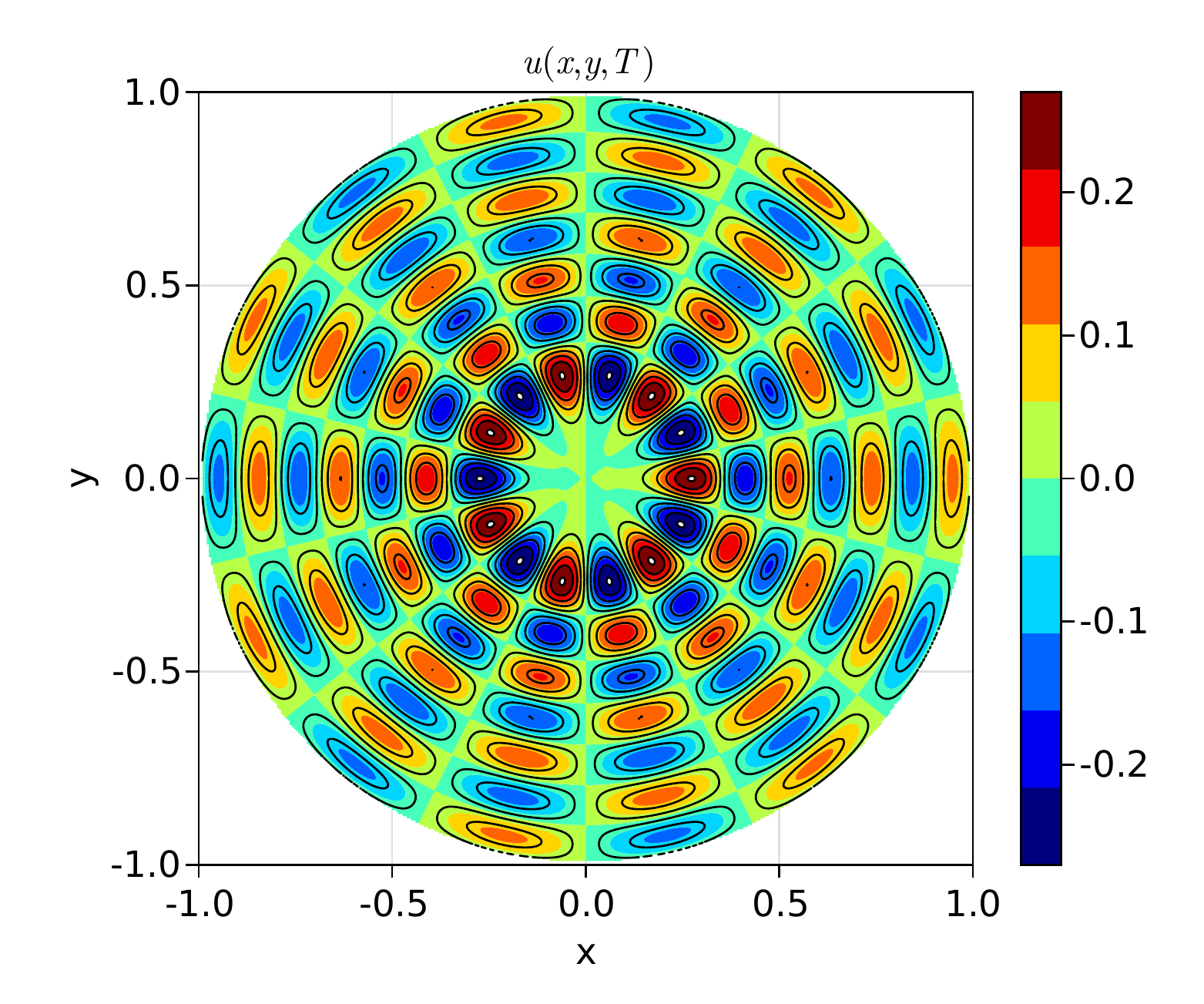}
\includegraphics[width=0.49\textwidth,trim={0.5cm 0.3cm 0.4cm 0.6cm},clip]{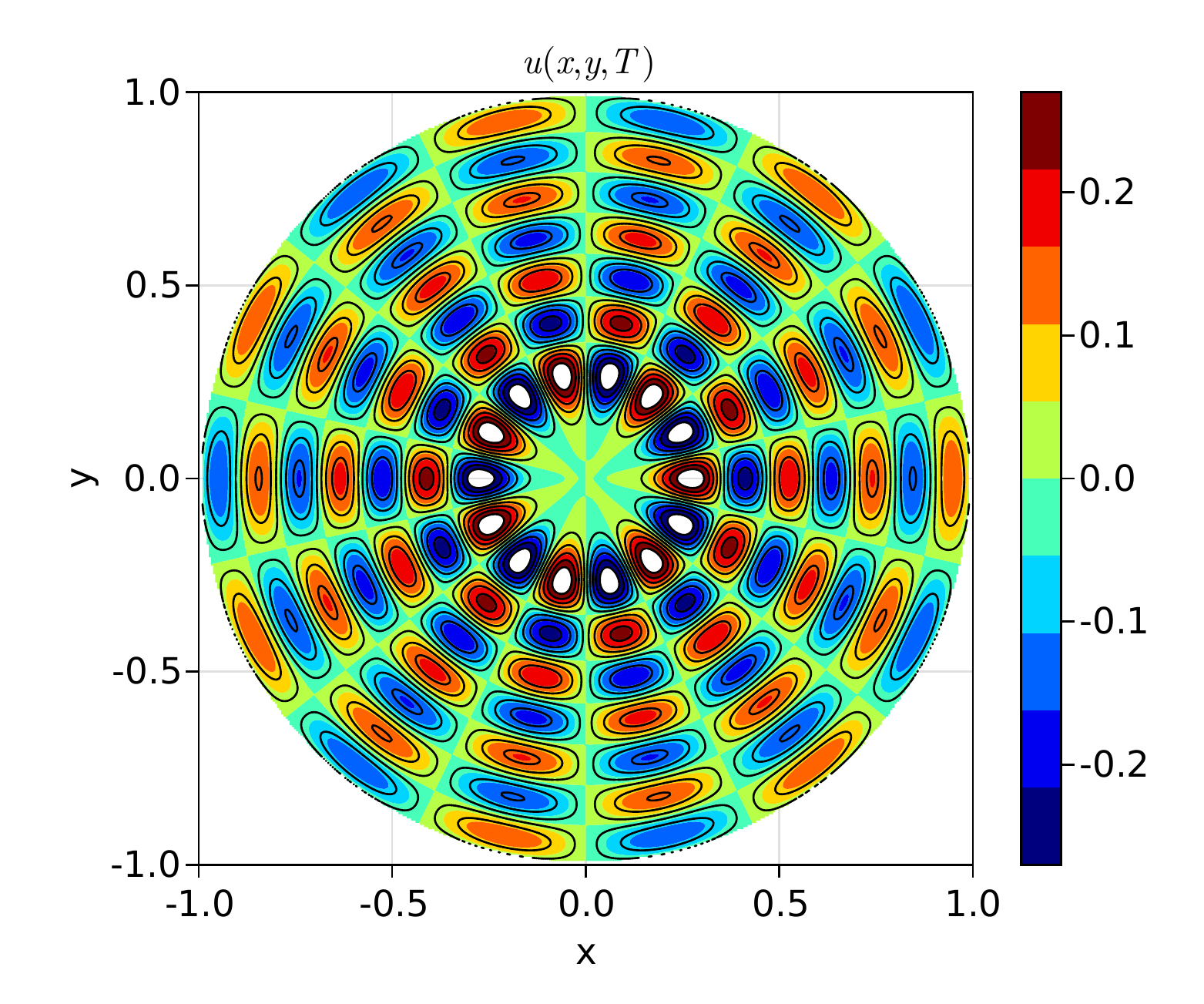}
\includegraphics[width=0.49\textwidth,trim={0.5cm 0.3cm 0.4cm 0.6cm},clip]{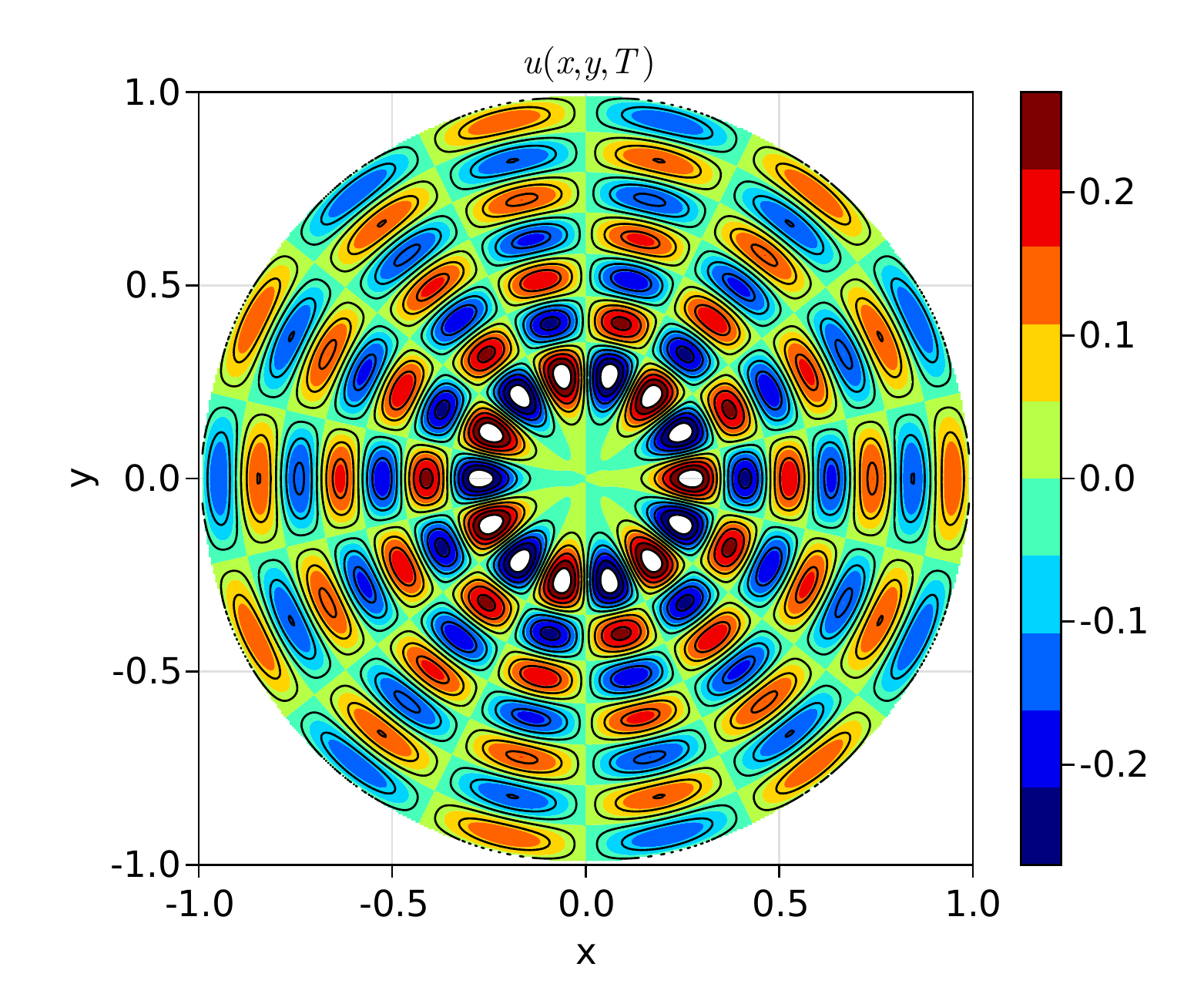}
\includegraphics[width=0.49\textwidth,trim={0.5cm 0.3cm 0.4cm 0.6cm},clip]{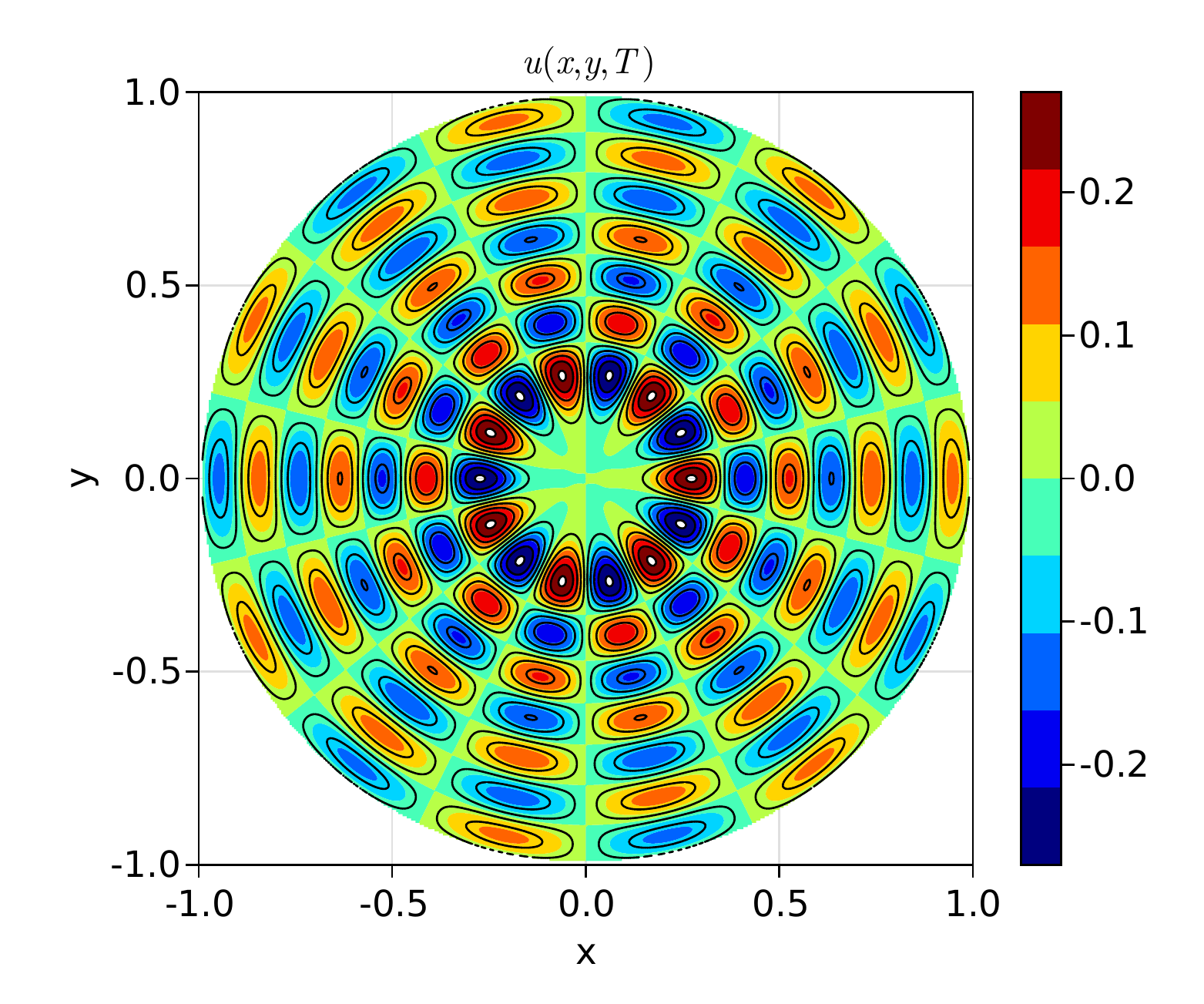}
\caption{The wave equation, $u_{tt} = \nabla^2 u$, with zero boundary condition on a disk shaped geometry. Numerical solutions with $\theta = (0,1/2,1/4,1/12)$ correspondingly from left to right, from top to bottom at the final time.\label{fig:wavesol}}
\end{center}
\end{figure}

\begin{figure}[htb]
\begin{center}
\includegraphics[width=0.49\textwidth,trim={0.5cm 0.3cm 0.4cm 0.6cm},clip]{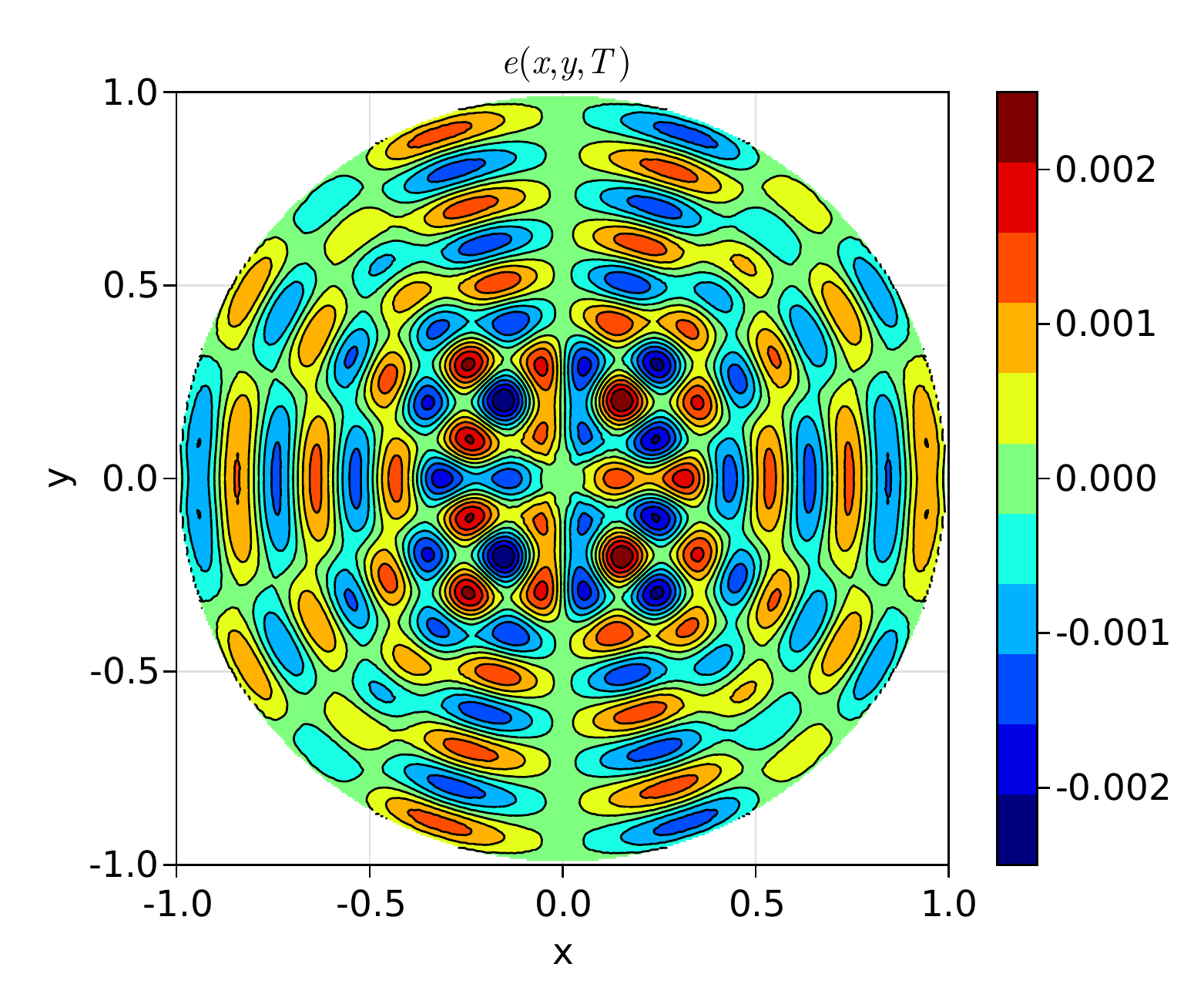}
\includegraphics[width=0.49\textwidth,trim={0.5cm 0.3cm 0.4cm 0.6cm},clip]{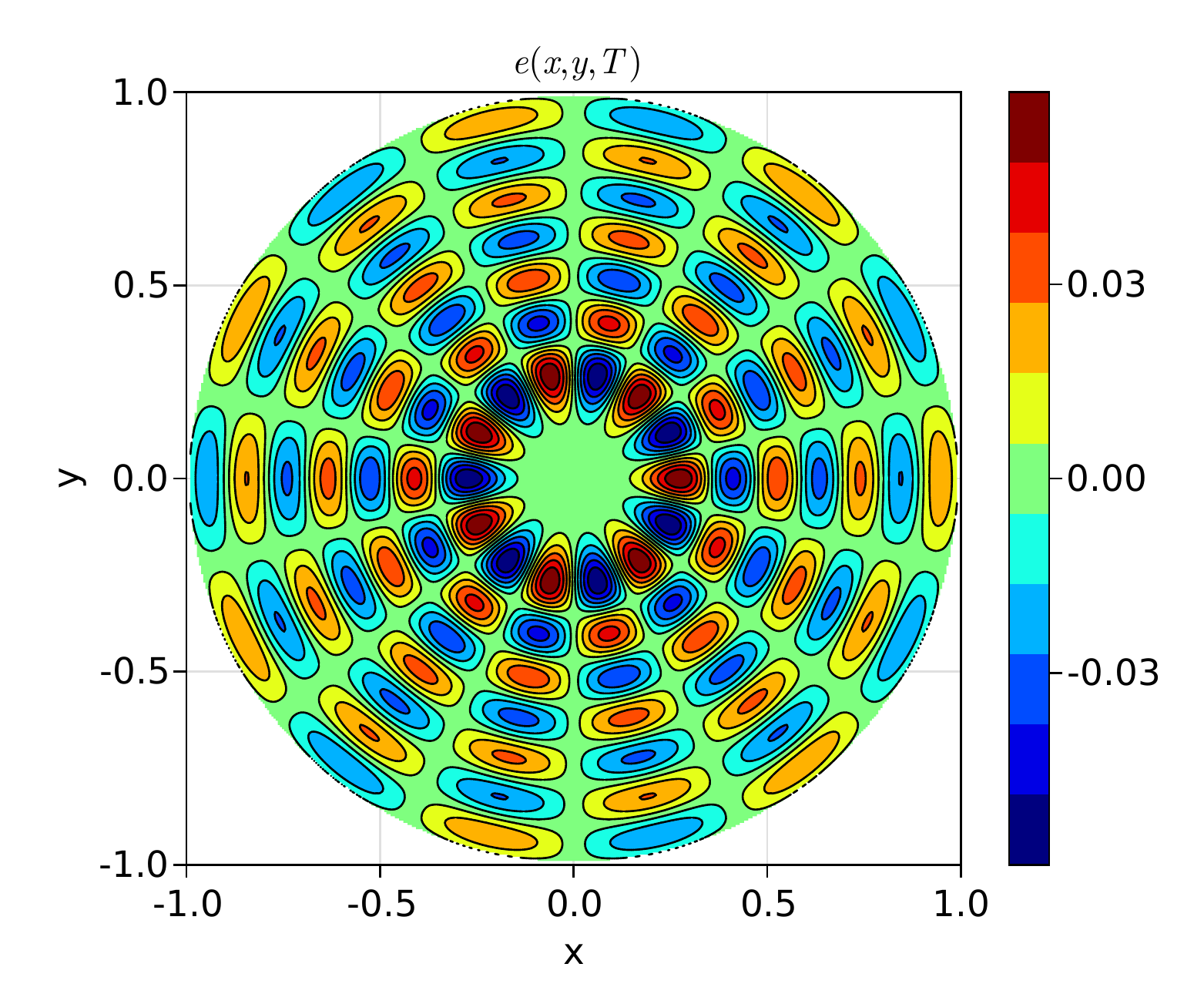}
\includegraphics[width=0.49\textwidth,trim={0.5cm 0.3cm 0.4cm 0.6cm},clip]{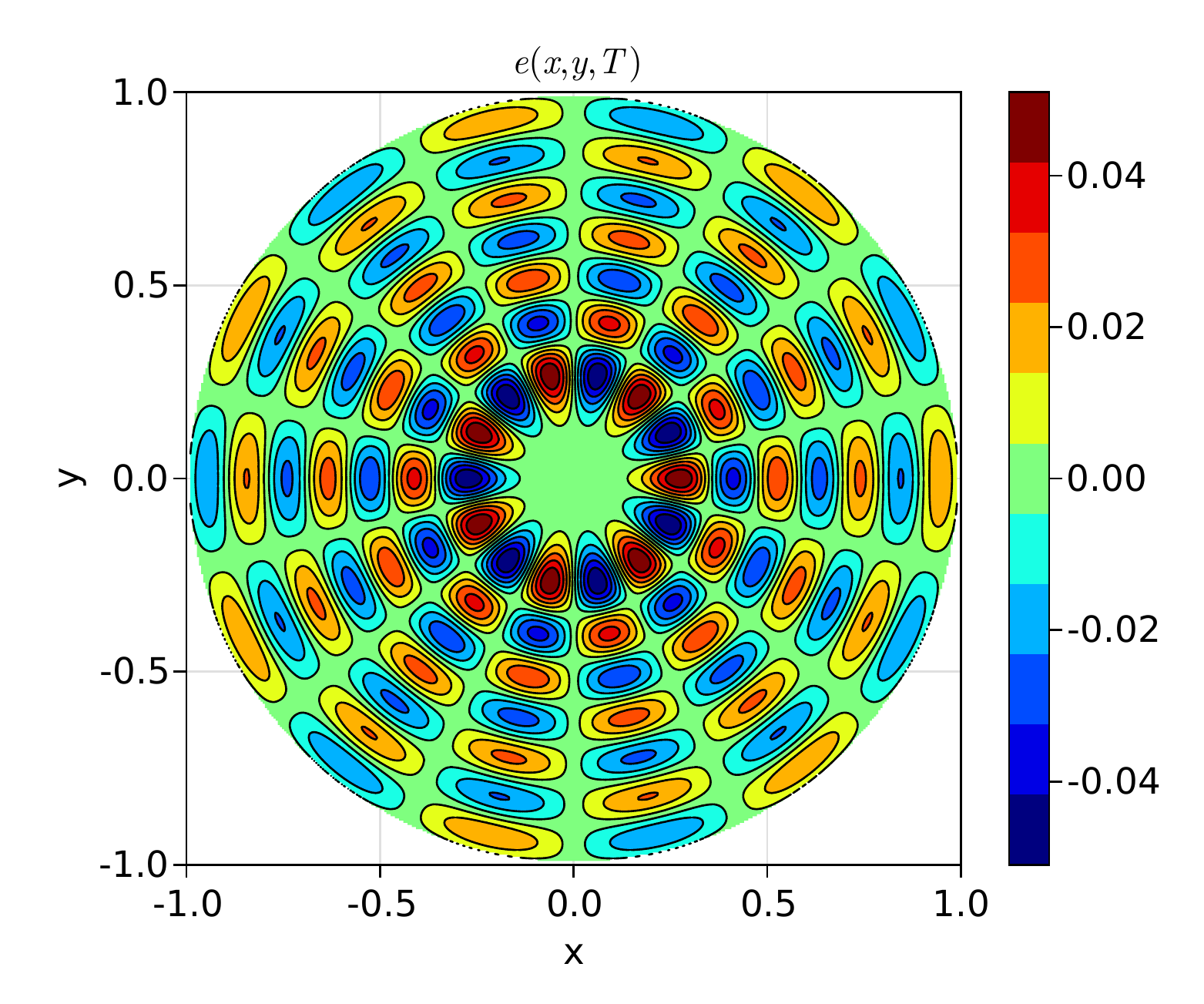}
\includegraphics[width=0.49\textwidth,trim={0.5cm 0.3cm 0.4cm 0.6cm},clip]{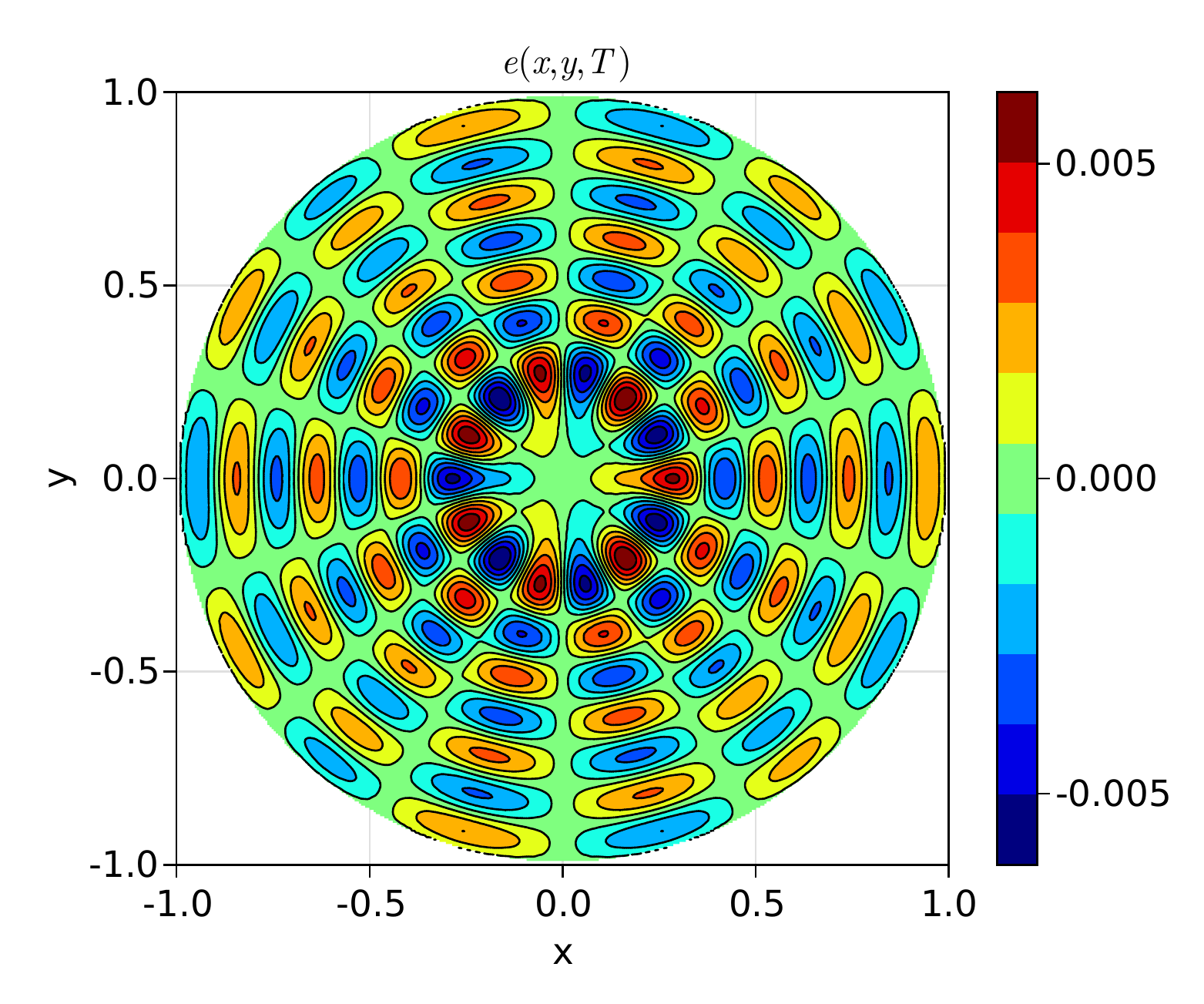}
\caption{The wave equation, $u_{tt} = \nabla^2 u$, with zero boundary condition on a disk shaped geometry. Displayed are the errors for $\theta = (0,1/2,1/4,1/12)$ correspondingly from left to right, from top to bottom at the final time. \label{fig:waveerr}}
\end{center}
\end{figure}


\subsection{Quantity of interest determined by geometry parameters}
In shape optimization  problems, a quantity of interest (QOI) or loss may be determined by the parameters of the geometry. In order to find the optimal geometry, the gradient of the QOI or loss with respect to the parameters needs to be computed. If one wants to use an embedded boundary method as a building block for shape optimization the QOI computed by the method should be smooth with respect to the parameter. To show the potential of our method for these type of problems, we consider two prototype examples.

In the first example, we consider Poisson's equation on an elliptical shape with a fixed area
\begin{equation*}
	\frac{x^2}{a^2} + \frac{y^2}{1/a^2} =1.
\end{equation*}
Here, $a\in[0.5,2]$, $\beta=1$ and zero Dirichlet boundary conditions are used. The source function is $f(x,y)=3$. The QOI is the value of $u$ at point $(0,0)$. In \fig{\ref{fig:QOI}}, we present the  QOI and compute its derivative with respect to $a$ by the central difference method. Both the QOI and computed derivative are smooth. We observe that $u(0,0)$ has the minimum at $a=1$, when the elliptical shape becomes a unit circle. We also observe that the value of $u(0,0)$ corresponding to $a$ and $\frac{1}{a}$ equal to each other.

In the second example, we consider an ellipsoid that is rotated by an angle $\alpha$ and the QOI is the integration of $u$ in the rectangular domain $[-1,1]\times[-0.5,0.5]$. The rotated-ellipse-shape  geometry is determined by the level-set function
\begin{align}
	\psi(x,y) = \frac{(x \cos(\theta)-y \sin(\theta))^2}{16} + \frac{(x \sin(\theta)+y \cos(\theta))^2}{4} -1.
\end{align}
where $\theta\in[0,2\pi)$. We consider the Poisson equation with $\beta=1$ and zero Dirichlet boundary conditions, and the source function is $f(x,y)=3$.
 A $501\times 501$ uniform mesh partitioning $[-5,5]\times[-5,5]$ is used.
In \fig{\ref{fig:QOI}}, we present the  QOI and compute its derivative with respect to $\theta$ by the central difference method. Both of the QOI and the derivative are smooth. Expected symmetry is also observed.

For both examples, the QOI and its derivative computed by our method are smooth, and the underlying geometric symmetry is also respected.
\begin{figure}[htb]
\centering
\includegraphics[width=0.45\textwidth,trim={0.2cm 0.2cm 1.0cm 0.2cm},clip]{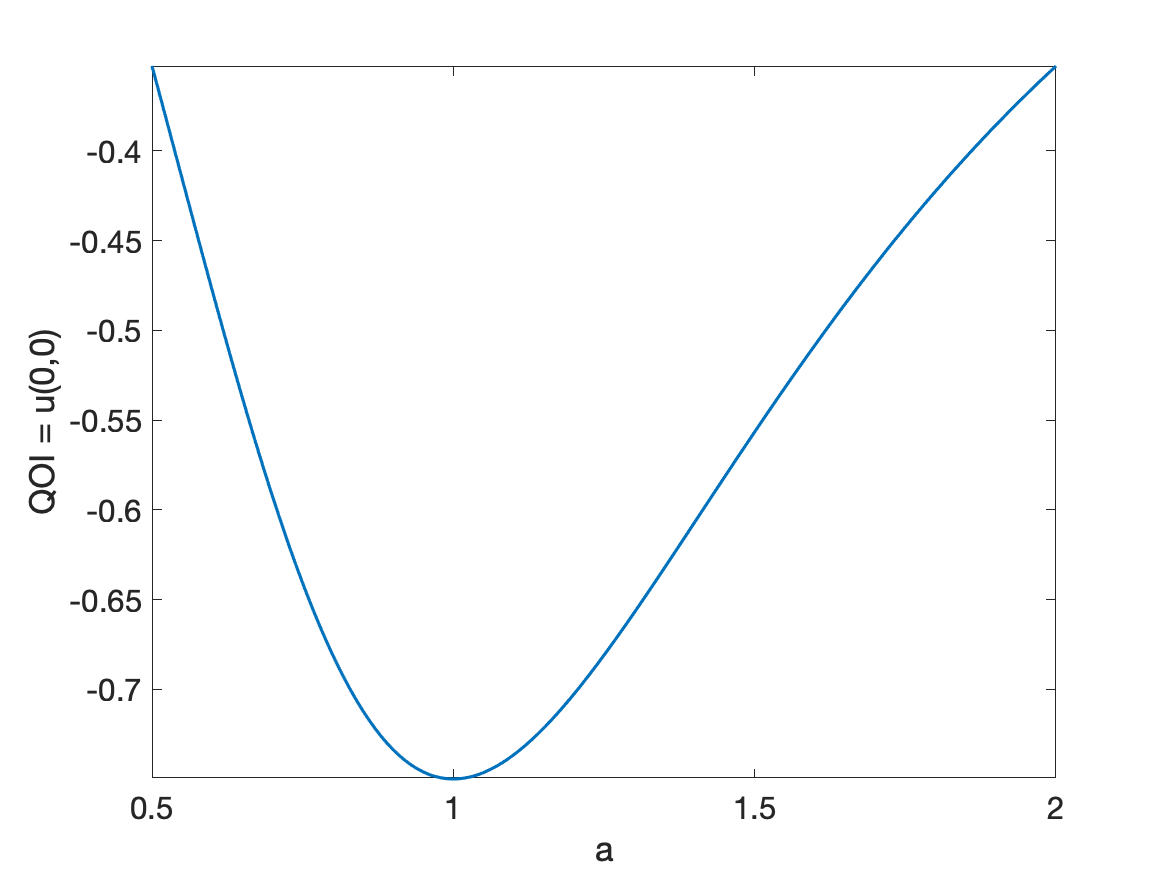}
\includegraphics[width=0.45\textwidth,trim={0.2cm 0.2cm 1.0cm 0.2cm},clip]{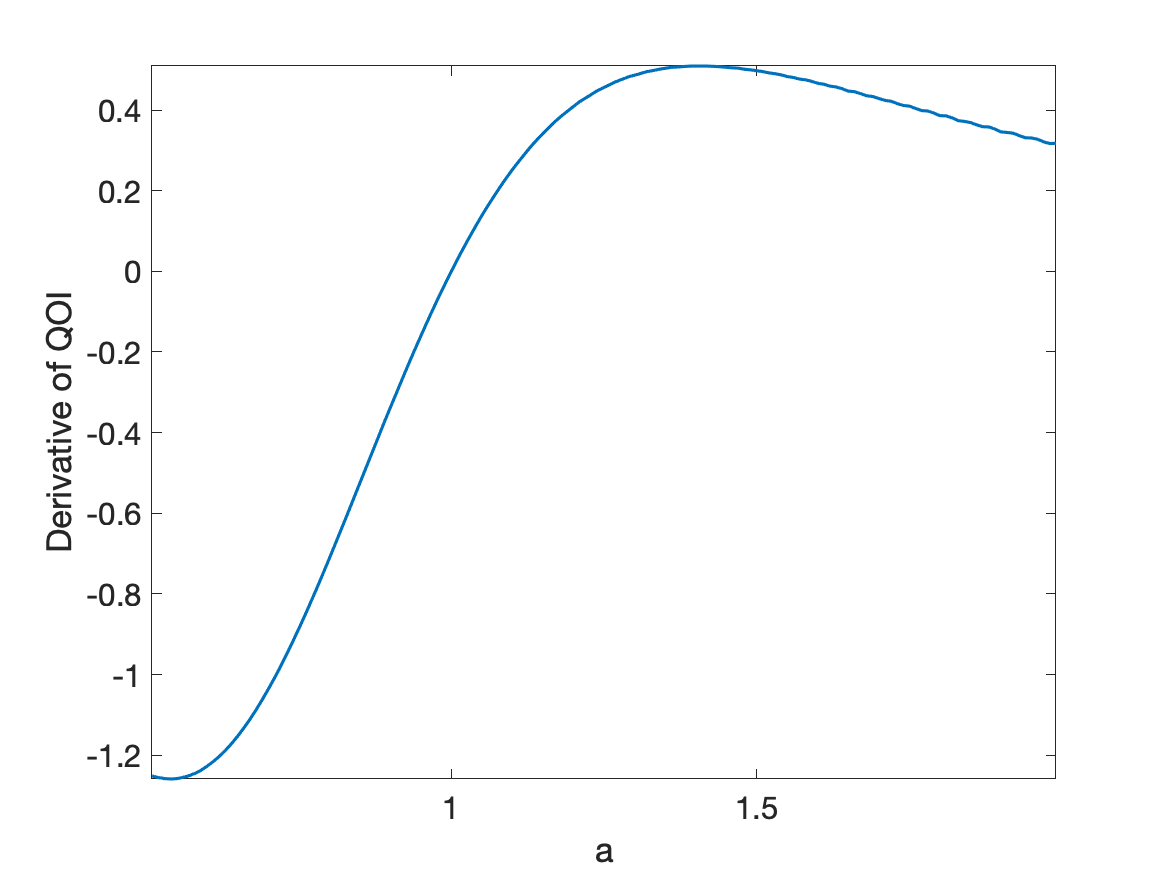}
\includegraphics[width=0.45\textwidth,trim={0.2cm 0.2cm 1.0cm 0.2cm},clip]{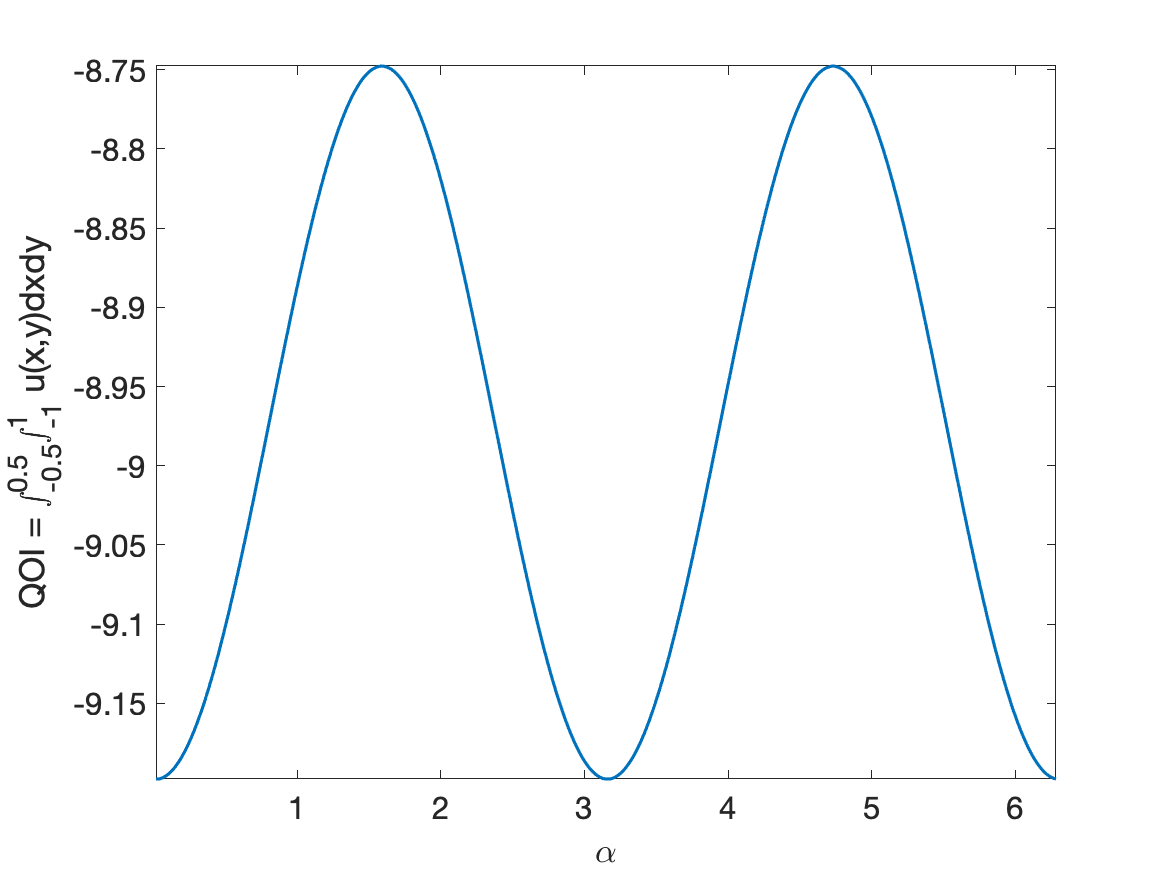}
\includegraphics[width=0.45\textwidth,trim={0.2cm 0.2cm 1.0cm 0.2cm},clip]{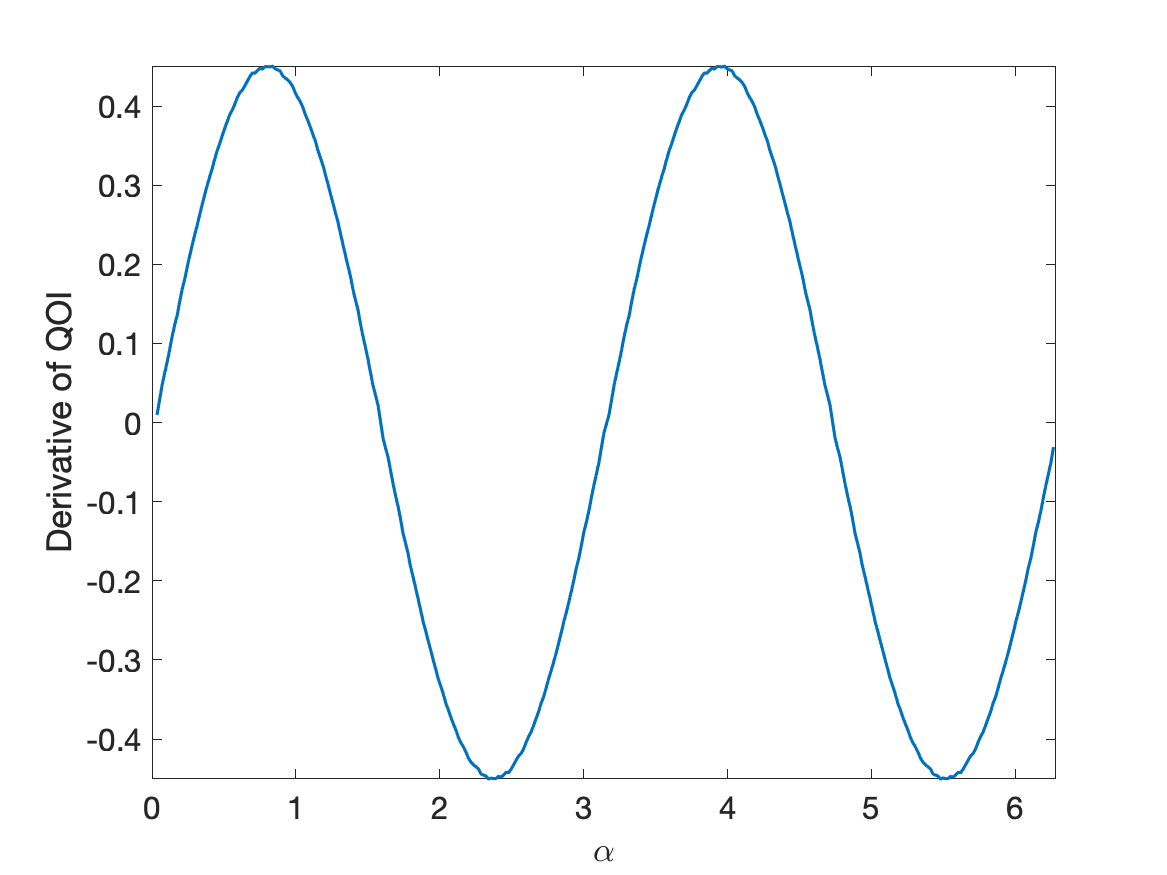}
\caption{Computations of QOIs with geometry parameter. Top left: QOI vs $a$ for the first example. Top right: the derivative of QOI with respect to $a$ vs $a$ for the first example. Bottom left: QOI vs  the rotation angle $\alpha$ for the second example. Bottom right: the derivative of QOI with respect to $\alpha$ vs  the rotation angle $\alpha$ for the second example. \label{fig:QOI}}
\end{figure}

\section{Conclusion\label{sec:conclusion}}
In summary, we have developed a universal embedded boundary method to solve Poisson's equation, the heat equation or the wave equation in general two-dimensional domains with complex geometries subject to Dirichlet boundary conditions. The two advantages of this method are: (1) by using interior boundary points instead of placing the ghost point outside the computational domain, the small-cell stiffness  is avoided. (2) to impose the boundary condition, we  apply the line-by-line interpolation or RBF interpolation, which results in a SPD linear system. The SPD structure of the discrete Laplacian operator can be rigorously proved for the convex geometry and is numerically verified for non-convex geometries. A simple criteria to a-priori check the SPD property is also proposed. 

A possible next step in our work is to generalize the current method to  problems with the Neumann boundary conditions. Other possible extensions include combining the current method with the level set method for moving boundary problems (e.g. Stefan problem) and the generalization of the method to the Navier-Stokes equations, and the Grad-Shafranov equations \cite{liu2021parallel,Peng2020AnAD}. 

\appendix
\section{Proof of Lemma \ref{lem:1d_spd}\label{appendix:proof}}
For the case $n=1$ and $n=2$, the lemma can be verified through direct computations of leading principal minors. We only focus on the case $n\geq 3$.

The matrix $\mathcal{D}^{(n)}(a,b)$ is manifestly symmetric. To prove that $\mathcal{D}^{(n)}\in\mathbb{R}^{n\times n}$ is SPD, we use mathematical induction to prove that its leading principal minors are all positive. When $1\leq k\leq n-1$, the $k$-th order leading principal minor of $\mathcal{D}^{(n)}\in\mathbb{R}^{n\times n}$ is
\begin{align}
Q^{(n)}_k(a) = \det\underbrace{\left(\begin{matrix}
a & -1 & 0 & \dots & 0 & 0\\
-1 &  2 &-1& \dots  & 0 & 0\\
0 & -1 & 2 & \dots  & 0 & 0\\
\vdots & &\vdots & \vdots & \vdots & \vdots\\
0 & 0 & 0 & \dots & 2 & -1\\
0 & 0 & 0 & \dots &-1  & 2
\end{matrix}\right)}_{k\times k}, \qquad 1\leq k\leq n-1.\label{eq:Qan_def}
\end{align}
The $n$-th order leading principal minor of $\mathcal{D}^{(n)}(a,b)\in\mathbb{R}^{n\times n}$ is
\begin{align}
P^{(n)}(a,b) = \det\underbrace{\left(\begin{matrix}
a & -1 & 0 & \dots & 0 & 0\\
-1 &  2 &-1& \dots  & 0 & 0\\
0 & -1 & 2 & \dots  & 0 & 0\\
\vdots & &\vdots & \vdots & \vdots & \vdots\\
0 & 0 & 0 & \dots & 2 & -1\\
0 & 0 & 0 & \dots &-1  & b
\end{matrix}\right)}_{n\times n}.
\end{align}

(1) We first prove that if $a>\frac{n-2}{n-1}$, then the first $n-1$ principal minors of $\mathcal{D}^{(n)}\in\mathbb{R}^{n\times n}$, namely $Q^{(n)}_k(a)$ ($1\leq k\leq n-1)$ are all positive. 

For $n=3$, with direct computations, one can check that if $ a>\frac{3-2}{3-1}=\frac{1}{2}$ then $Q^{(3)}_1(a)$ and $Q^{(3)}_2(a)$ are positive.
Now, for the induction case $n-1$, we assume if $a>\frac{n-3}{n-2}$, then the first $n-2$ principal minors of  $\mathcal{D}^{(n-1)}$, namely $Q^{(n-1)}_k(a)$ $(1\leq k\leq n-2)$, are  all positive. With this induction assumption, we will prove that if $a>\frac{n-2}{n-1}$ then the first $n-1$ principal minors of $\mathcal{D}^{(n)}(a,b)$ are all positive. 

 The definition in \eqref{eq:Qan_def} implies that $Q^{(n)}_k(a)=Q^{(n-1)}_k(a)$ when $1\leq k\leq n-2$. Moreover, if $a>\frac{n-2}{n-1}$, then $a>\frac{n-3}{n-2}$ and the induction assumption leads to 
$$Q^{(n)}_k(a)=Q^{(n-1)}_k(a)>0,\; 1\leq k\leq n-2.$$ 
Now, we only need to prove that $Q^{(n)}_{n-1}$ is positive:
\begin{align}
Q^{(n)}_{n-1}(a)&=\det\left(\begin{matrix}
a & -1 & 0 & \dots & 0 & 0\\
-1 &  2 &-1& \dots  & 0 & 0\\
0 & -1 & 2 & \dots  & 0 & 0\\
\vdots & &\vdots & \vdots & \vdots & \vdots\\
0 & 0 & 0 & \dots & 2 & -1\\
0 & 0 & 0 & \dots &-1  & 2
\end{matrix}\right)
=\det\left(\begin{matrix}
a & -1 & 0 & \dots & 0 & 0\\
0 &  2-\frac{1}{a} &-1& \dots  & 0 & 0\\
0 & -1 & 2 & \dots  & 0 & 0\\
\vdots & &\vdots & \vdots & \vdots & \vdots\\
0 & 0 & 0 & \dots & 2 & -1\\
0 & 0 & 0 & \dots &-1  & 2
\end{matrix}\right)\notag\\
&= aQ^{(n-1)}_{n-2}\left(2-\frac{1}{a}\right).
\end{align}

Moreover, if $a>\frac{n-2}{n-1}>0$, then
\begin{align}
2-\frac{1}{a}>2-\frac{n-1}{n-2}=\frac{n-3}{n-2},
\end{align}
and together with the induction assumption for $n-1$, we have
$$Q^{(n)}_{n-1}(a)= aQ^{(n-1)}_{n-2}\left(2-\frac{1}{a}\right)>0.$$

(2) At last, we prove if $ a > \frac{(n-2)b-(n-3)}{(n-1)b-(n-2)}$ and $b>\frac{n-2}{n-1}$, the last principal minor $P^{(n)}(a,b)$ is positive. We perform an induction proof with respect to $n$.

For the base case, when $n=3$, $b>\frac{n-2}{n-1}=\frac{1}{2}$ and $a > \frac{(n-2)b-(n-3)}{(n-1)b-(n-2)}=\frac{b}{2b-1}$. A direct computation shows that $P^{(3)}(a,b)$ is positive.
Now, we turn to the induction case, for $n-1$, assume that 
\begin{align}
b>\frac{n-2}{n-3}\quad\text{and}\quad a > \frac{(n-3)b-(n-4)}{(n-2)b-(n-3)}\label{eq:induction_assumption}
\end{align} implies
that $P^{(n-1)}(a,b)$ is positive. 

We note that $P^{(n)}(a,b)$ can be related to $P^{(n-1)} \left(2-\frac{1}{a},b\right)$ through the rules of determinants as follows:
\begin{align}
P^{(n)}(a,b)&=\det\left(\begin{matrix}
a & -1 & 0 & \dots & 0 & 0\\
-1 &  2 &-1& \dots  & 0 & 0\\
0 & -1 & 2 & \dots  & 0 & 0\\
\vdots & &\vdots & \vdots & \vdots & \vdots\\
0 & 0 & 0 & \dots & 2 & -1\\
0 & 0 & 0 & \dots &-1  & b
\end{matrix}\right)
=\det\left(\begin{matrix}
a & -1 & 0 & \dots & 0 & 0\\
0 &  2-\frac{1}{a} &-1& \dots  & 0 & 0\\
0 & -1 & 2 & \dots  & 0 & 0\\
\vdots & &\vdots & \vdots & \vdots & \vdots\\
0 & 0 & 0 & \dots & 2 & -1\\
0 & 0 & 0 & \dots &-1 & b
\end{matrix}\right)\notag\\
&= a P^{(n-1)} \left(2-\frac{1}{a},b\right).
\end{align}
Now, we just need to verify that $a>0$ and $P^{(n-1)} \left(2-\frac{1}{a},b\right)>0$ under the assumption that $b>\frac{n-2}{n-1}$ and $a > \frac{(n-2)b-(n-3)}{(n-1)b-(n-2)}$. As $b>\frac{n-2}{n-1}$ and $a> \frac{(n-2)b-(n-3)}{(n-1)b-(n-2)}$, we have 
\begin{align*}
&(n-1)b-(n-2)>0,\quad(n-2)b-(n-3)\geq\frac{(n-2)^2-(n-1)(n-3)}{n-1}\frac{1}{n-1}>0,\\
&0<\frac{1}{a}<\frac{(n-1)b-(n-2)}{(n-2)b-(n-3)}\quad
\text{and}\quad a^*=2-\frac{1}{a}>2-\frac{(n-1)b-(n-2)}{(n-2)b-(n-3)}=\frac{(n-3)b-(n-4)}{(n-2)b-(n-3)}.
\end{align*}
Hence, we have $b=\frac{n-2}{n-1}>\frac{n-2}{n-3}$, and $a^*>\frac{(n-3)b-(n-4)}{(n-2)b-(n-3)}$. The induction assumption for the $n-1$ case in \eqref{eq:induction_assumption} is satisfied and $P^{(n-1)} \left(2-\frac{1}{a},b\right)=P^{(n-1)} \left(a^*,b\right)>0$.

\bibliographystyle{plain}
\bibliography{appelo,pzc,shuang}
\end{document}